\documentclass[12pt]{amsart}
\usepackage{amsmath,amssymb,amsbsy,amsfonts,latexsym,amsopn,amstext,
                                               amsxtra,euscript,amscd,bm}
                    
\usepackage{url}
\usepackage[colorlinks,linkcolor=blue,anchorcolor=blue,citecolor=blue]{hyperref}
\usepackage{cleveref}
\usepackage{nameref}
\usepackage{color}
\numberwithin{equation}{section}
\usepackage[left=70pt, right=70pt]{geometry}
\usepackage{mathtools}

\begin{document}

\newtheorem{problem}{Problem}
\newtheorem{theorem}{Theorem}[section]
\newtheorem{lemma}[theorem]{Lemma}
\newtheorem{corollary}[theorem]{Corollary}
\newtheorem{conjecture}[theorem]{Conjecture}
\newtheorem{example}[theorem]{Example}
\newtheorem{claim}[theorem]{Claim}
\newtheorem{cor}[theorem]{Corollary}
\newtheorem{condition}[theorem]{Condition}
\newtheorem{prop}[theorem]{Proposition}
\newtheorem{definition}[theorem]{Definition}
\newtheorem{question}[theorem]{Question}
\newtheorem{conj}{Conjecture}
\newtheorem{hypothesis}{Hypothesis}
\def\cA{{\mathcal A}}
\def\cB{{\mathcal B}}
\def\cC{{\mathcal C}}
\def\cD{{\mathcal D}}
\def\cE{{\mathcal E}}
\def\cF{{\mathcal F}}
\def\cG{{\mathcal G}}
\def\cH{{\mathcal H}}
\def\cI{{\mathcal I}}
\def\cJ{{\mathcal J}}
\def\cK{{\mathcal K}}
\def\cL{{\mathcal L}}
\def\cM{{\mathcal M}}
\def\cN{{\mathcal N}}
\def\cO{{\mathcal O}}
\def\cP{{\mathcal P}}
\def\cQ{{\mathcal Q}}
\def\cR{{\mathcal R}}
\def\cS{{\mathcal S}}
\def\cT{{\mathcal T}}
\def\cU{{\mathcal U}}
\def\cV{{\mathcal V}}
\def\cW{{\mathcal W}}
\def\cX{{\mathcal X}}
\def\cY{{\mathcal Y}}
\def\cZ{{\mathcal Z}}

\def\A{{\mathbb A}}
\def\B{{\mathbb B}}
\def\C{{\mathbb C}}
\def\D{{\mathbb D}}
\def\E{{\mathbb E}}
\def\F{{\mathbb F}}
\def\G{{\mathbb G}}
\def\I{{\mathbb I}}
\def\J{{\mathbb J}}
\def\K{{\mathbb K}}
\def\L{{\mathbb L}}
\def\M{{\mathbb M}}
\def\N{{\mathbb N}}
\def\O{{\mathbb O}}
\def\P{{\mathbb P}}
\def\Q{{\mathbb Q}}
\def\R{{\mathbb R}}
\def\S{{\mathbb S}}
\def\T{{\mathbb T}}
\def\U{{\mathbb U}}
\def\V{{\mathbb V}}
\def\W{{\mathbb W}}
\def\X{{\mathbb X}}
\def\Y{{\mathbb Y}}
\def\Z{{\mathbb Z}}

\def\e{{\mathbf{e}}}
\def\ep{{\mathbf{e}}_p}
\def\eq{{\mathbf{e}}_q}

\DeclarePairedDelimiter\ceil{\lceil}{\rceil}
\DeclarePairedDelimiter\floor{\lfloor}{\rfloor}

\def\scr{\scriptstyle}
\def\({\left(}
\def\){\right)}
\def\[{\left[}
\def\]{\right]}
\def\<{\langle}
\def\>{\rangle}
\def\fl#1{\left\lfloor#1\right\rfloor}
\def\rf#1{\left\lceil#1\right\rceil}
\def\le{\leqslant}
\def\ge{\geqslant}
\def\eps{\varepsilon}
\def\mand{\qquad\mbox{and}\qquad}

\def\sssum{\mathop{\sum\ \sum\ \sum}}
\def\ssum{\mathop{\sum\, \sum}}
\def\ssumw{\mathop{\sum\qquad \sum}}

\def\vec#1{\mathbf{#1}}
\def\inv#1{\overline{#1}}
\def\num#1{\mathrm{num}(#1)}
\def\dist{\mathrm{dist}}

\def\fA{{\mathfrak A}}
\def\fB{{\mathfrak B}}
\def\fC{{\mathfrak C}}
\def\fU{{\mathfrak U}}
\def\fM{{\mathfrak M}}
\def\fm{{\mathfrak m}}
\def\fV{{\mathfrak V}}
\def\fI{{\mathfrak I}}

\newcommand{\bflambda}{{\boldsymbol{\lambda}}}
\newcommand{\bfxi}{{\boldsymbol{\xi}}}
\newcommand{\bfrho}{{\boldsymbol{\rho}}}
\newcommand{\bfnu}{{\boldsymbol{\nu}}}
\newcommand{\psum}{\sideset{}{^*}\sum}

\def\GL{\mathrm{GL}}
\def\SL{\mathrm{SL}}

\def\Hba{\overline{\cH}_{a,m}}
\def\Hta{\widetilde{\cH}_{a,m}}
\def\Hb1{\overline{\cH}_{m}}
\def\Ht1{\widetilde{\cH}_{m}}

\def\flp#1{{\left\langle#1\right\rangle}_p}
\def\flm#1{{\left\langle#1\right\rangle}_m}
\def\dmod#1#2{\left\|#1\right\|_{#2}}
\def\dmodq#1{\left\|#1\right\|_q}

\newcommand*\diff{\mathop{}\!\mathrm{d}}
\newcommand{\funccol}{\colon \thinspace}
\setlength\parindent{0pt}

\def\Zm{\Z/m\Z}

\def\Err{{\mathbf{E}}}

\newcommand{\comm}[1]{\marginpar{%
\vskip-\baselineskip 
\raggedright\footnotesize
\itshape\hrule\smallskip#1\par\smallskip\hrule}}

\def\xxx{\vskip5pt\hrule\vskip5pt}



\title{On the Goldbach problem with restricted primes}
\author{Michael Harm}
\address{School of Mathematics and Statistics, UNSW, 
Sydney, NSW 2052, Australia}
\email{m.harm@unsw.edu.au}

\begin{abstract}
    Let $N$ be a sufficiently large, odd integer. We prove an asymptotic formula for the number of representations of $N$ as the sum of three primes, one of which is smaller than a given $U$. By inserting the currently best zero-density estimate for Dirichlet $L$-functions, we may unconditionally take $U= N^{\frac{4}{49}}\exp(\log^{\frac{2}{3}+\varepsilon}N)$ for any $\varepsilon>0$. If we assume the Generalized Riemann Hypothesis instead, we may take $U= \log^{4+\varepsilon}N$.
\end{abstract}

\maketitle
\thispagestyle{empty}

\section{Introduction}
The binary Goldbach Conjecture is one of the most famous outstanding problems in number theory. It was first proposed during a letter exchange between Leonhard Euler and Christian Goldbach in the year 1742. It is characterized by
\begin{conjecture}\label{Conj: binary}
    Every even integer greater than $2$ is the sum of two primes.
\end{conjecture}
While Conjecture \ref{Conj: binary} remains unproven, there has been significant progress for related problems. Most prominent among these is the ternary Goldbach problem.
\begin{theorem}\label{thm: ternary}
    Every odd integer greater than $5$ is the sum of three primes.
\end{theorem}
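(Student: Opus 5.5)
The plan is the classical one: the Hardy--Littlewood circle method handles all sufficiently large odd $N$, and a separate, essentially computational argument covers the finitely many remaining $N$. For odd $N$ I would study the weighted count
\[
R(N)=\sum_{n_1+n_2+n_3=N}\Lambda(n_1)\Lambda(n_2)\Lambda(n_3)=\int_0^1 S(\alpha)^3\,\e(-N\alpha)\diff\alpha,\qquad S(\alpha)=\sum_{n\le N}\Lambda(n)\e(n\alpha).
\]
I would then show $R(N)>0$ after removing the negligible contribution of prime powers, which is $O(N^{3/2}\log^2 N)$. First I would split $[0,1)$ into major arcs $\fM$, the $\alpha$ within $Q/(qN)$ of some $a/q$ with $q\le Q$, and minor arcs $\fm$. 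On $\fM$ I would write $S(a/q+\beta)$ through Dirichlet characters modulo $q$. Using the Siegel--Walfisz theorem (or explicit zero-free regions) I would show $S(a/q+\beta)\approx \frac{\mu(q)}{\varphi(q)}\sum_{n\le N}\e(n\beta)$. This yields the main term $\frac{1}{2}\mathfrak{S}(N)N^2(1+o(1))$, where $\mathfrak{S}(N)=\prod_{p\mid N}\big(1-\frac{1}{(p-1)^2}\big)\prod_{p\nmid N}\big(1+\frac{1}{(p-1)^3}\big)$, and $\mathfrak{S}(N)\gg 1$ for odd $N$.

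For the minor arcs I would use Vinogradov's estimate, in the streamlined form from Vaughan's identity combined with Dirichlet approximation:
\[
S(\alpha)\ll \Big(\frac{N}{\sqrt q}+N^{4/5}+\sqrt{Nq}\Big)\log^4 N .
\]
Combined with Parseval, $\int_0^1|S|^2=O(N\log N)$, this gives $\int_{\fm}|S|^3\le \sup_{\fm}|S|\cdot N\log N=o(N^2)$ once $Q$ is a suitable power of $\log N$. Together these prove the theorem for all $N\ge N_0$ with $N_0$ ineffective, which is Vinogradov's theorem.

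The real obstacle is making $N_0$ explicit and small enough to be reachable by computation; this is Helfgott's work. I would attack it in three steps. \emph{First}, I would replace Siegel--Walfisz on the major arcs by a rigorous verification of GRH for all Dirichlet $L$-functions of modulus up to a fixed explicit bound, up to a fixed height (as done by Platt). With smoothing weights in place of sharp cutoffs, this gives explicit major-arc error terms. \emph{Second}, I would prove fully explicit minor-arc bounds with optimal dependence on $q$, of the shape $|S_\eta(\alpha)|\ll N\log q/\sqrt{q}$ with good constants. For this I would use a smoothed Vaughan identity and careful bilinear-sum estimates, and feed in an $\ell^2$ bound restricted to the minor arcs (via the large sieve) rather than the full Parseval bound, so that the extra logarithm is avoided. \emph{Third}, I would combine the two with a careful choice of smoothings for the three prime variables, which lets the major/minor dichotomy close for all $N\ge 10^{27}$ or so.

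For the remaining odd $N$ below this threshold, I would use the verification of the binary Goldbach conjecture up to $4\cdot10^{18}$ (Oliveira e Silva--Herzog--Pardi). I would combine it with a rigorously checked ladder of primes whose consecutive gaps are below $4\cdot10^{18}$, reaching past $10^{27}$. For such $N$ I pick a prime $p$ in the ladder with $N-p$ even and $4\le N-p\le 4\cdot10^{18}$, and write $N-p$ as a sum of two primes. This completes the proof. The second step, explicit minor-arc bounds strong enough to meet the computational range, is where I expect the main difficulty to lie.
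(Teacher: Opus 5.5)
The paper gives no proof of this theorem. It quotes it as a known result: asymptotically due to Hardy--Littlewood (under GRH) and Vinogradov, and in full due to Helfgott. Your outline is an accurate sketch of exactly that cited route, namely Vinogradov's circle method for large $N$, then Helfgott's explicit major- and minor-arc estimates lowering the threshold to about $10^{27}$, then a prime ladder combined with the binary Goldbach verification up to $4\cdot 10^{18}$ below that. The explicit steps are only described, not carried out, so your argument, like the paper's, ultimately rests on Helfgott's work and the accompanying Helfgott--Platt computation rather than being self-contained.
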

Theorem \ref{thm: ternary} has first been proven asymptotically under the Riemann Hypothesis (RH) by Hardy and Littlewood in 1923 \cite{HL23}. Vinogradov then gave an unconditional asymptotic proof of the ternary Goldbach problem in 1937. And most recently it has been proven unconditionally by Helfgott in 2013 \cite{Hel13}. \\

A natural question to ask is how large gaps between Goldbach numbers (that is, even numbers that are the sum of two primes) can be. Assuming the Riemann Hypothesis (RH), Linnik \cite{Lin52} showed that for sufficiently large $N$, and any $\varepsilon>0$ there exists a Goldbach number in the interval $[N,N+\log^{3+\varepsilon}N]$. Linnik's result has been independently improved by Katai \cite{Kat67}, Montgomery-Vaughan \cite{MV75}, and Languasco-Perelli \cite{LP94}, who showed that, assuming RH, there exists a Goldbach number in any interval $[N,N+C\log^{2}N]$, for sufficiently large $N$ and a positive constant $C>0$.\\

Obviously Theorem \ref{thm: ternary} follows from Conjecture \ref{Conj: binary} if we add the prime $3$ to any even number greater than $2$. Another natural extension to gaps between Goldbach numbers is to ask if we can satisfy Theorem \ref{thm: ternary} with the additional condition of an upper bound $U$ on one of the primes. The first result of this type is due to Pan \cite{Pan59}, who showed $U=N^{\frac{1}{4}+\varepsilon}$ for any $\varepsilon>0$ and sufficiently large $N$. Zhan \cite{Zha95} obtained $U=N^{\frac{7}{72}+\varepsilon}$ using Huxley's \cite{Hux75} zero-density estimate. In the same paper Zhan also obtained $U=N^{\frac{7}{120}+\varepsilon}$ via sieve methods, these allow for much smaller bounds $U$, but forego an asymptotic formula. Wong \cite{Won96} improved this to $U=N^{\frac{7}{216}+\varepsilon}$ and Jia \cite{Jia96} to $U=N^{\frac{7}{240}+\varepsilon}$. Currently, the best result is by Cai \cite{Cai13}, who proved that asymptotically any odd integer $N$ is the sum of three primes, one of which is smaller than $U=N^{\frac{11}{400}+\varepsilon}$ for any $\varepsilon>0$. \\

The aim of this paper is to generalize and refine the methods of Zhan \cite{Zha95}, to obtain a direct correspondence between the ternary Goldbach problem with a small prime and zero-density estimates for Dirichlet $L$ functions, while also maintaining an asymptotic formula. Furthermore, we use the same methods to obtain a direct correspondence between the ternary Goldbach problem with a small prime and the Generalized Riemann Hypothesis (\nameref{con: GRH}).

\section{Main Results}
Let $N\in\N$, and let $U\leq y \leq \frac{N}{2}$. We define the unweighted and logarithmically weighted number of representations of the ternary Goldbach problem with restricted primes by
\begin{equation*}
\begin{split}
    \cR(N,y,U):=&\sum_{\substack{p_1+p_2+p_3=N\\N-2y< p_1\leq N-y\\y< p_2\leq 2y\\p_3\leq U}}1,\\
    R(N,y,U):=&\sum_{\substack{p_1+p_2+p_3=N\\N-2y< p_1\leq N-y\\y< p_2\leq 2y\\p_3\leq U}}\log p_1 \thinspace\log p_2\thinspace \log p_3.
\end{split}
\end{equation*}

\subsection{Results using zero-density estimates}
The results stated in this section are conditional on zero-density estimates of Dirichlet $L$-functions. Let $q\in\N$, let $\sigma \in \R$, let $T>0$ and let $L(s,\chi)$ be the Dirichlet $L$-function associated to the Dirichlet character $\chi$ modulo $q$. We denote by $\cN(\sigma,T,\chi)$ the number of zeroes $\rho=\beta +i\gamma$ of $L(s,\chi)$ with $\beta>\sigma$ and $|\gamma|\leq T$.
\begin{condition}\label{con: zero density}
    Let $q\in\N$, let $\frac{1}{2}\leq \sigma \leq 1$ and let $T>T_0>0$. There exist constants $A\geq 2$ and $B\geq 0$, such that
    \begin{equation*}
        \sum_{\chi\thinspace(q)}\cN(\sigma,T,\chi)\ll (qT)^{A(1-\sigma)}\log^B qT,
    \end{equation*}
    where the sum runs over all Dirichlet characters $\chi$ modulo $q$.
\end{condition}
Until recently the best zero-density estimate for Dirichlet $L$-functions could be attributed to Huxley with $A=\frac{12}{5}$ \cite[Equation 1.1]{Hux75}. However, a recent result by Guth-Maynard \cite{GM24} gives $A=\frac{30}{13}$ for $q=1$. Chen \cite{Che25} was able to improve Huxley's result using the methods of Guth-Maynard, per Chen's result we may take $A=\frac{7}{3}$ for all $q\in\N$. Barring any major roadblocks, it is feasible to expect Chen's result to be further improved to $A=\frac{30}{13}$ for any $q\in\N$. We call the case $A=2$ the Generalized Density Hypothesis, which is considered a weaker version of GRH.

\begin{theorem}\label{thm: main zero density}
    Assume Condition \ref{con: zero density}. Let $0<\varepsilon_1$ and let $0<\varepsilon_2<\frac{1}{3}$. For any odd $N\in\N$, $y\in [N^{1-\frac{1}{A}}\exp(\log^{\frac{2}{3}+\varepsilon_2}N),N/2]$ and $U\in[y^{1-\frac{1}{A}}\exp(\log^{\frac{2}{3}+\varepsilon_2}y),y/\log^{\varepsilon_1}N]$, we have
        \begin{equation*}
            \cR(N,y,U) \sim \frac{\mathfrak{S}(N)Uy}{\log U\log y\log N},
        \end{equation*}
    where $\mathfrak{S}$ denotes the infinite product
    \begin{equation*}
        \mathfrak{S}(N):=\prod_{p\nmid N}\bigg(1+ \frac{1}{(p-1)^3}\bigg)\prod_{p\mid N}\bigg(1- \frac{1}{(p-1)^2}\bigg).
    \end{equation*}
\end{theorem}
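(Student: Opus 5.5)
The plan is to prove the weighted version $R(N,y,U)\sim \mathfrak{S}(N)Uy$ first, and then pass to $\cR(N,y,U)$ by partial summation. Partial summation loses nothing, because on the ranges in question $\log p_1\sim\log N$, $\log p_2\sim\log y$, and the contribution of $p_3\le U/\log^{\varepsilon}N$ is negligible.

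Write
\[
R(N,y,U)=\sum_{p_3\le U}\log p_3\sum_{\substack{p_1+p_2=N-p_3\\ N-2y<p_1\le N-y,\ y<p_2\le 2y}}\log p_1\log p_2 .
\]
In this form the inner sum is a binary Goldbach count with $p_2$ restricted to a dyadic interval of length $y$ and $p_1$ to an interval of length $y$ near $N$. I would not use the circle method in its usual form. Instead, following Zhan, I would fix $p_2$ and count $p_1=N-p_2-p_3$, so that the problem becomes one about primes in short intervals. Concretely, set
\[
R=\sum_{y<p_2\le 2y}\log p_2\sum_{p_3\le U}\log p_3\,\Lambda(N-p_2-p_3)+O(\text{prime powers}).
\]
The inner sum over $p_3$ is then a correlation of primes in an interval of length $U$ with primes in a translated interval near $N$.

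The cleanest route is a circle method with major arcs of height $Q=\log^{C}N$ and a large sieve or dispersion on the minor arcs, using $y$ and $U$ as the short-interval lengths. On the major arcs $\alpha=a/q+\beta$, I would expand each of the three exponential sums $S_j(\alpha)$ over characters via the explicit formula,
\[
\sum_{n\in I}\Lambda(n)\chi(n)e(n\beta)=\delta_\chi\int_I e(t\beta)\,dt-\sum_{\rho}\int_I t^{\rho-1}e(t\beta)\,dt+\text{error},
\]
for intervals $I$ of length $H\in\{y,U\}$ located at $X\in\{N,y,U\}$.

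The zero contributions are controlled by Condition~\ref{con: zero density} together with the Vinogradov--Korobov zero-free region $\beta<1-c\log^{-2/3-\epsilon}(qT)$. Summing $X^{\beta-1}H$ over zeros with $|\gamma|\le T\approx X/H$ gives a bound of roughly
\[
H\max_\sigma (QX/H)^{A(1-\sigma)}X^{\sigma-1}\log^B,
\]
which is $o(H)$ precisely when $H\ge X^{1-1/A}\exp(\log^{2/3+\varepsilon_2}X)$. This is exactly where the hypotheses $y\ge N^{1-1/A}\exp(\cdots)$ and $U\ge y^{1-1/A}\exp(\cdots)$ enter. The main term then yields the singular series $\mathfrak{S}(N)$ and the singular integral $\sim Uy$ in the standard way. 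Since $Q$ is a power of $\log$, the singular series truncation error is $O(\log^{-c}N)$. Siegel zeros are handled by Siegel's theorem, which is admissible because $Q$ is polylogarithmic.

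The minor arcs are the main obstacle, because $U$ is far too small for a pointwise Vinogradov bound on the sum over $p_3$. My first attempt would be to bound
\[
\int_{\mathfrak m}|S_1S_2S_3|\le \sup_{\mathfrak m}|S_3|\,\Bigl(\int|S_1|^2\Bigr)^{1/2}\Bigl(\int|S_2|^2\Bigr)^{1/2}\ll \sup_{\mathfrak m}|S_3|\,y\log N.
\]
This would require $\sup_{\mathfrak m}|S_3|=o(U/\log N)$, which fails for small $U$. Instead I would put the supremum on the sum over $p_2$ (length $y$, located at $y$, so Vinogradov/Vaughan gives $y\log^{-C}$ on minor arcs with $Q=\log^{C'}$). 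I would then bound $\int|S_1S_3|$ by Cauchy--Schwarz as $(yU)^{1/2}\log N$.

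This yields $y(yU)^{1/2}\log^{-C}$, which is not $o(yU)$ unless $U$ is large. To fix this, I would restructure as Zhan does. First, fix $p_3$ and treat the binary problem for $p_1+p_2=N-p_3$ in short intervals. Then average over $p_3\le U$, using a mean-square short-interval prime estimate: the variance of $\sum_{p_1+p_2=m}$ over $m$ in an interval of length $U$ is controlled via zero density. This produces the same thresholds, and it is the technically hardest part, because one needs uniformity in $q\le Q$ and sufficient saving in $\log$ to beat the factor $\log^{\varepsilon_1}N$ coming from $U\le y/\log^{\varepsilon_1}N$.
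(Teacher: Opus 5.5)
\textbf{There is a genuine gap.} It sits exactly at the step you flag as the technically hardest part: the minor arcs are never actually bounded.

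Your closing plan is to fix $p_3$ and control the variance of the binary counts $\sum_{p_1+p_2=m}$ over $m$ in a window of length $U$ via zero density. That is not an argument. A mean-square bound for binary Goldbach with $p_2\in(y,2y]$, taken over a window of length $U\ll y$, is itself a circle-method problem at least as hard as the theorem. The direct Bessel-inequality bound for it is off by the same factor $y/U$ as your $\sup|S_2|$ attempt.

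Meanwhile, you discarded the estimate that does work. In the theorem's range, $U\ge y^{1-1/A}\exp(\log^{2/3+\varepsilon_2}y)$ exceeds every power of $\log N$. So apply Dirichlet's theorem with parameter $Q=U/\log^{c_2}N$, and take as major arcs \emph{all} $\alpha$ with $|\alpha-a/q|\le 1/(qQ)$ and $q\le P=\log^{c_1}N$. On the complement, Vinogradov's bound gives $S_3(\alpha)\ll(Uq^{-1/2}+\sqrt{qU}+U^{4/5})\log^3U\ll U\log^{3-c_1/2}N$. Together with Parseval for $S_1,S_2$, this yields $R_\fm\ll Uy\log^{4-c_1/2}N=o(Uy)$ for $c_1>8$. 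Your assertion that $\sup_\fm|S_3|=o(U/\log N)$ cannot hold is true only if the major arcs are narrow (width about $\log^C N/y$ or smaller). In that case the region $|\beta|\lesssim 1/U$ around $a/q$ with small $q$ falls in $\fm$, where $S_3$ has size $U/\varphi(q)$.

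The price of such wide major arcs is that your major-arc analysis breaks down. For $S_1$ and $S_2$, moving from $a/q$ to $a/q+\beta$ costs a factor $1+|\beta|y$, which can be as large as $y\log^{c_2}N/U$. So pointwise explicit-formula approximations of these two sums are useless on most of each arc. The paper treats them differently.

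For $S_2$, Gallagher's lemma turns $\int_{\fM(q,a)}|\Delta_2|^2$ into the mean square of $\Delta(t,qQ/2,a/q)$ over $t\in[y,2y]$. That is a statement about primes in almost all intervals of length $\asymp U$ near $y$. Condition~\ref{con: zero density} controls it once $U\ge y^{1-2/A}\exp(\cdots)$, which holds in particular under the stated hypothesis. This, not a pointwise short-interval estimate, is where the lower bound on $U$ in terms of $y$ comes from. Your setup contains no interval of length $U$ located near $y$, so your explanation of that hypothesis does not hold up.

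For $S_1$, no mean-square input near $N$ is available. The paper first replaces $S_2,S_3$ by $\frac{\mu(q)}{\varphi(q)}v_2$ and $\frac{\mu(q)}{\varphi(q)}v_3$. It then extends the $\beta$-integral to the whole circle, controlling the tail by Cauchy--Schwarz and $|v(x,h,\beta)|\ll\min(h,|\beta|^{-1})$. The extended integral is evaluated exactly by counting solutions of $n_2+n_3=N-p_1$, giving $U\e(-Na/q)S_1(a/q)+\cO(U^2)$. After that, only the pointwise bound for $\Delta_1(a/q)$ on an interval of length $y$ near $N$ is needed. The $\cO(U^2)$ term is what forces $U\le y/\log^{\varepsilon_1}N$, a hypothesis your outline never genuinely uses.
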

Note that $\mathfrak{S}(N)=0$ for even $N$, and $\mathfrak{S}(N)>\prod_{p>2}\big(1-\frac{1}{(p-1)^2}\big)> \frac{1}{2}$ for odd $N$.
\begin{corollary}\label{cor: main density 1}
    Assume Condition \ref{con: zero density}. Every sufficiently large odd integer $N$ is the sum of three primes
    \begin{equation*}
        N=p_1+p_2+p_3,
    \end{equation*}
    where $p_3\leq U=N^{(1-\frac{1}{A})(1-\frac{2}{A})}\exp(\log^{\frac{2}{3}+\varepsilon}N)$ for any $\varepsilon>0$.
\end{corollary}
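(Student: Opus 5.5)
We obtain Corollary \ref{cor: main density 1} directly from Theorem \ref{thm: main zero density} by choosing $y$ as small as that theorem allows and $U$ as small as the theorem then allows. Fix $\varepsilon>0$ and put $\varepsilon_2=\varepsilon/2$; we may assume $\varepsilon_2<\frac13$. Take
\begin{equation*}
    y=N^{1-\frac{1}{A}}\exp\big(\log^{\frac{2}{3}+\varepsilon_2}N\big),
\end{equation*}
which is admissible because it lies below $N/2$ for large $N$. Then take $U=y^{1-\frac{1}{A}}\exp(\log^{\frac{2}{3}+\varepsilon_2}y)$. This $U$ is admissible as long as $U\le y/\log^{\varepsilon_1}N$ for some fixed $\varepsilon_1>0$, say $\varepsilon_1=1$. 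That inequality holds because $y/U=y^{1/A}\exp(-\log^{\frac23+\varepsilon_2}y)$, and this is a positive power of $N$ times a subexponential factor, hence much larger than $\log N$.

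Next we check that this $U$ satisfies the bound required in the corollary. We compute
\begin{equation*}
    U=N^{(1-\frac1A)^2}\exp\Big((1-\tfrac1A)\log^{\frac23+\varepsilon_2}N+\log^{\frac23+\varepsilon_2}y\Big).
\end{equation*}
The exponential factor is at most $\exp(2\log^{\frac23+\varepsilon_2}N)\le\exp(\log^{\frac23+\varepsilon}N)$ for large $N$, since $\log y\le\log N$. For the power of $N$ we use $A\ge2$, which gives $(1-\frac1A)^2\le(1-\frac1A)(1-\frac2A)+\frac{1}{A}(1-\frac1A)$. Here there is a mismatch to examine: $(1-\frac1A)^2$ is \emph{larger} than $(1-\frac1A)(1-\frac2A)$, not smaller. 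So the naive choice produces the weaker exponent $(1-\frac1A)^2$, and I must account for the exponent stated in the corollary.

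This exponent comparison is the main obstacle. My first check is whether the stated exponent comes from the admissible ranges in Theorem \ref{thm: main zero density}: the $y$-range starting at $N^{1-\frac1A}$, and the $U$-range with its factor $y^{1-\frac1A}$. These give only $(1-\frac1A)^2$, and for $A=\frac73$ that is $\frac{16}{49}$, not the $\frac{4}{49}$ claimed in the abstract. The abstract's $\frac{4}{49}$ equals $(1-\frac{1}{A})(1-\frac{2}{A})$ only if the lower limit for $U$ is effectively $y^{1-\frac{2}{A}}\cdots$ rather than $y^{1-\frac1A}$; indeed $(\frac47)(\frac17)=\frac4{49}$. I would therefore reread the $U$-range as $U\ge y^{1-\frac2A}\exp(\cdots)$, which is the natural range coming from a zero-density estimate applied to the short sum over $p_3\le U$. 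With $U=y^{1-\frac2A}\exp(\log^{\frac23+\varepsilon_2}y)$, the same computation gives exactly $U\le N^{(1-\frac1A)(1-\frac2A)}\exp(\log^{\frac23+\varepsilon}N)$.

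To finish, apply Theorem \ref{thm: main zero density} with these parameters. Using $\mathfrak{S}(N)>\frac12$ for odd $N$, it gives
\begin{equation*}
    \cR(N,y,U)\gg \frac{Uy}{\log^3N}>0
\end{equation*}
for all sufficiently large odd $N$. So at least one representation $N=p_1+p_2+p_3$ exists with $p_3\le U$. Finally, a larger $\varepsilon$ only enlarges the allowed $U$, so it suffices to treat small $\varepsilon$.
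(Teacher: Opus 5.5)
Your route is the one the paper intends; the paper gives the corollary no separate proof. You put $y$ at the bottom of its admissible range and $U$ at the bottom of its admissible range, then use $\mathfrak{S}(N)>\frac12$ to get $\cR(N,y,U)>0$. Your checks that $y\le N/2$, that $U\le y/\log^{\varepsilon_1}N$, and that the exponential factors combine to at most $\exp(\log^{\frac23+\varepsilon}N)$ are fine. You are also right that Theorem \ref{thm: main zero density} as printed, with $U\ge y^{1-\frac1A}\exp(\cdots)$, only yields the exponent $(1-\frac1A)^2$. The stated exponent $(1-\frac1A)(1-\frac2A)$ needs the lower limit $U\ge y^{1-\frac2A}\exp(\log^{\frac23+\varepsilon_2}y)$. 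That is what gives $\frac{4}{49}$ for $A=\frac73$, and Zhan's $\frac{7}{72}=\frac{7}{12}\cdot\frac16$ for Huxley's $A=\frac{12}{5}$.

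The gap is the justification for that reread. As written, you apply the theorem outside its stated range, and the reason you offer points at the wrong sum. The sum over $p_3\le U$ is $S_3=S(U,U,\alpha)$, a full-length sum. It is treated unconditionally: Lemma \ref{lemma: all unconditional} handles it in the bound for $I_1$, which only needs $\log U\ge\log^{\frac23}N$. The condition $U\ge y^{1-\frac2A}\exp(\cdots)$ comes from $I_2$ in Lemma \ref{lemma: major arcs density 1}, and the paper does state that lemma with this range. There, Lemma \ref{lemma: Gallagher} converts $\int_{\fM(q,a)}|\Delta_2|^2$ into the mean square over $t\in[y,2y]$ of $\Delta(t,\frac{qQ}{2},\frac aq)$. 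That is a statement about primes $p_2$ in almost all intervals of length $\asymp Q=U\log^{-c_2}N$ near $y$. Lemma \ref{lemma: almost all zero density} applies exactly when $\frac{qQ}{2}\ge y^{1-\frac2A}\exp(\cdots)$, with the factor $\log^{c_2}N$ absorbed by shrinking $\varepsilon_2$. To close the gap you must also check that no later step uses $U\ge y^{1-\frac1A}$. Lemma \ref{lemma: minor arcs density} only needs $U\ge\log^kN$. Lemma \ref{lemma: major arcs density 2} only needs $y\ge N^{1-\frac1A}\exp(\cdots)$, for $\Delta_1(\frac aq)$ and Parseval, and $U\le y/\log^{\varepsilon_1}N$, for the $U^2$ term. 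So the $y^{1-\frac1A}$ in Lemma \ref{lemma: major arcs density 2}, in the final lemma of that section, and in the theorem is a misprint for $y^{1-\frac2A}$. With this verification added, your argument is complete.
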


\begin{corollary}\label{cor: main density 2}
    Assume Condition \ref{con: zero density}. Let $N\in\N$ be sufficiently large and let $U=N^{(1-\frac{1}{A})(1-\frac{2}{A})}\exp(\log^{\frac{2}{3}+\varepsilon}N)$ for any $\varepsilon>0$. There exists an integer $n\in [N,N+U]$ which is the sum of two primes
    \begin{equation*}
        N=p_1+p_2.
    \end{equation*}
\end{corollary}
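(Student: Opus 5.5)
The plan is to derive Corollary \ref{cor: main density 2} directly from Corollary \ref{cor: main density 1}, by subtracting the small prime from a suitably chosen odd integer slightly above $N$. The conclusion should read $n=p_1+p_2$ rather than $N=p_1+p_2$, and that is what we prove. Throughout, fix $\varepsilon>0$ and write
\begin{equation*}
U(x,\varepsilon):=x^{(1-\frac{1}{A})(1-\frac{2}{A})}\exp(\log^{\frac{2}{3}+\varepsilon}x),
\end{equation*}
so that the $U$ in the statement is $U=U(N,\varepsilon)$.

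\textbf{Choice of $N'$.} Let $N'$ be the odd integer among $\{\lfloor N+U\rfloor-1,\lfloor N+U\rfloor\}$. Then
\begin{equation*}
N+U-2< N'\leq N+U,
\end{equation*}
and $N'$ is sufficiently large once $N$ is.

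\textbf{Applying Corollary \ref{cor: main density 1}.} We apply it to $N'$ with the smaller parameter $\varepsilon/2$ in place of $\varepsilon$. This gives
\begin{equation*}
N'=p_1+p_2+p_3,\qquad p_3\leq U(N',\varepsilon/2).
\end{equation*}

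\textbf{Comparing the two bounds for $p_3$.} The key estimate is $U(N',\varepsilon/2)\leq U/2-2$ for large $N$. Since $U\leq N$, we have $N'\leq 2N$, hence $\log N'=\log N+O(1)$. Consequently
\begin{equation*}
\frac{U(N',\varepsilon/2)}{U}\ll \exp\bigl(\log^{\frac{2}{3}+\frac{\varepsilon}{2}}(2N)-\log^{\frac{2}{3}+\varepsilon}N\bigr)\to 0,
\end{equation*}
because $\log^{\frac{2}{3}+\varepsilon}N-\log^{\frac{2}{3}+\frac{\varepsilon}{2}}(2N)\to\infty$. Since also $U\to\infty$, the claimed estimate holds for all sufficiently large $N$.

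\textbf{Conclusion.} Set $n:=N'-p_3=p_1+p_2$. On the one hand, $n< N'\leq N+U$. On the other hand,
\begin{equation*}
n\geq N+U-2-(U/2-2)=N+U/2\geq N.
\end{equation*}
Hence $n\in[N,N+U]$ is a sum of two primes, as required.

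The only delicate point is this bookkeeping of the exponential factor. The step from $\exp(\log^{\frac{2}{3}+\varepsilon}N)$ to $\exp(\log^{\frac{2}{3}+\varepsilon/2}N')$ is what creates room to absorb the parity adjustment and the replacement of $N$ by $N'$. All the analytic content is already contained in Corollary \ref{cor: main density 1}.
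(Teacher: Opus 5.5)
Your argument is correct and is exactly the deduction the paper leaves implicit, since it gives no separate proof: take an odd $N'$ just below $N+U$ and a representation $N'=p_1+p_2+p_3$ with $p_3$ small; then $N'-p_3$ is a sum of two primes in $[N,N+U]$, and your passage to $\varepsilon/2$ absorbs both the parity adjustment and the replacement of $N$ by $N'$. The one unstated point is that $U\le N$ (and hence $\log N'=\log N+O(1)$) requires $\varepsilon<\frac{1}{3}$, but the conclusion is monotone in $\varepsilon$, so you may assume this without loss of generality.
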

Theorem \ref{thm: main zero density} and Corollaries \ref{cor: main density 1} \& \ref{cor: main density 2} are true for $A=\frac{7}{3}$ due to \cite{Che25}, i.e. we may take $U =N^\frac{4}{49}\exp(\log^{\frac{2}{3}+\varepsilon}N)$ for any $\varepsilon>0$. If the result of Guth-Maynard \cite{GM24} can be fully generalized to all $q\in \N$, we may take $A=\frac{30}{13}$, i.e. we may take $U =N^\frac{17}{225}\exp(\log^{\frac{2}{3}+\varepsilon}N)$. While our results unfortunately fall short of Cai's \cite{Cai13} $U=N^{\frac{11}{400}+\varepsilon}$ for now, our results are automatically strengthened by new and improved zero-density estimates for Dirichlet $L$-functions. For instance, a zero-density estimate with $A\approx 2.1103$ would be sufficient to improve on Cai's result. Furthermore, if we assume the Generalized Density Hypothesis (i.e. $A=2$) we may take $U=\exp(\log^{\frac{2}{3}+\varepsilon}N)$.
\subsection{Results under \nameref{con: GRH}}
The results in this section is conditional on the Generalized Riemann Hypothesis (\nameref{con: GRH}).
\begin{conjecture}[GRH]\label{con: GRH}
    For any Dirichlet character $\chi$ modulo $q\in\N$, we have
    \begin{equation*}
        \cN\bigg(\frac{1}{2},\infty,\chi\bigg)=0.
    \end{equation*}
\end{conjecture}
Using \nameref{con: GRH} we obtain a significantly stronger result than Theorem \ref{thm: main zero density}.
\begin{theorem}\label{thm: main GRH}
    Assume \nameref{con: GRH}. Let $0<\varepsilon$. Let $y\in [\sqrt{N}\log^{2+\varepsilon},N/2]$ and\\ \mbox{$U\in[\log^{4+\varepsilon}N,y/\log^{\varepsilon}N]$.} We have
    \begin{equation*}
        \cR(N,y,U) \asymp\frac{\mathfrak{S}(N)Uy}{\log U\log y\log N}. 
    \end{equation*}
\end{theorem}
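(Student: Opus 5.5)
We follow the approach used for Theorem \ref{thm: main zero density}: first treat the weighted count $R(N,y,U)$ and then pass to $\cR(N,y,U)$ by partial summation. Since $p_1\asymp N$, $p_2\asymp y$ and $p_3\le U$, the weights satisfy $\log p_1\asymp\log N$ and $\log p_2\asymp\log y$. The weight $\log p_3$ is at most $\log U$. Restricting to $p_3\in(U/2,U]$ makes it $\asymp\log U$, and the smaller primes contribute to $\cR$ only a bounded multiple of this dyadic part. Hence it suffices to prove $R(N,y,U)\asymp \mathfrak{S}(N)Uy$, together with the analogous statement for $p_3\in (U/2,U]$. Only an order of magnitude is claimed, not an asymptotic, so we may afford constant-factor losses throughout.

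We write
\begin{equation*}
R(N,y,U)=\sum_{p_3\le U}\log p_3 \sum_{\substack{p_1+p_2=N-p_3\\ y<p_2\le 2y}}\log p_1\log p_2 .
\end{equation*}
For each even $n=N-p_3$ we express the inner sum through the explicit formula for $\psi(x;q,a)$, i.e. the classical "short-interval binary Goldbach on average" method. Concretely, we use the circle method with major arcs $|\alpha-a/q|\le 1/(qQ)$, $q\le P$. On the major arcs we replace each exponential sum $S_j(\alpha)=\sum \log p\, e(p\alpha)$ by its expected value plus an error. Under GRH the error is controlled by $\sum_\chi|\sum_{p\in I}\chi(p)\log p|\ll \sqrt{|I|}\log^2$. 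The main term gives the singular series $\mathfrak{S}(N)$ times $Uy$, after summing over $p_3$. We must also check that the averaged singular series for $N-p_3$ matches $\mathfrak{S}(N)$ up to constants. This is routine: summing over $p_3$ with the local densities reproduces the ternary singular series.

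For the minor arcs we bound
\begin{equation*}
\int_{\mathfrak m}|S_1S_2S_3|\le \sup_{\mathfrak m}|S_3|\Big(\int|S_1|^2\Big)^{1/2}\Big(\int|S_2|^2\Big)^{1/2}\ll \sup_{\mathfrak m}|S_3|\,y\log N ,
\end{equation*}
using Parseval on the range of length $y$ for both $S_1$ and $S_2$. The required bound is then $\sup_{\mathfrak m}|S_3(\alpha)|=o(U/\log N)$ or a small constant times this. However, $S_3$ has length only $U\ge\log^{4+\varepsilon}N$, so Vinogradov-type estimates are useless at this scale.

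The main obstacle is exactly this minor-arc step. We would therefore rearrange the roles and put the short sum on the major-arc side. Using the orthogonality of $S_1S_2$ over intervals of length $y$, the problem reduces to a mean-square estimate over $n$ in the relevant range,
\begin{equation*}
\sum_n\Big|\sum_{\substack{p_1+p_2=n\\ y<p_2\le2y}}\log p_1\log p_2-\mathfrak{S}_2(n)y\Big|^2\ll \frac{y^3}{\log^{c}N}.
\end{equation*}
This is the GRH exceptional-set estimate for binary Goldbach in short intervals, in the style of Languasco--Perelli. Its proof uses $y\ge\sqrt N\log^{2+\varepsilon}N$, which is where that hypothesis enters. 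We then apply Cauchy--Schwarz over the $n=N-p_3$ with $p_3\le U$: the error is at most $\sqrt{U\log N}\cdot y^{3/2}\log^{-c/2}N$, to be compared with the main term $Uy$. This needs $U\ge y\log^{-c+1}N$, which fails in general. So we instead use the $L^2$ bound with the explicit GRH saving $y^{1/2}\log^{2}N$ per $n$ for most $n$, giving an error of $\sqrt U\, y\log^{2}N$. This is acceptable exactly when $U\gg\log^{4+\varepsilon}N$, which matches the hypothesis. We expect the chief technical work to be securing this per-$n$ saving of $\sqrt{y}\log^2 N$ uniformly enough. Given the loose constants allowed, we only need to establish the bound for a positive proportion of $p_3$, and this is what yields $\asymp$ rather than $\sim$.
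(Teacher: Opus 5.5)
\textbf{There is a genuine gap: you abandon the minor-arc step, but under GRH that step works, and your replacement route does not close.}

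You discard $R_\fm\ll y\log N\sup_{\fm}|S_3|$ because $S_3$ is too short for Vinogradov-type bounds. Under GRH this is exactly how the paper treats the minor arcs, and it uses no Vinogradov input. Take $\alpha=\frac aq+\beta$ with $P<q\le Q$ and $|\beta|\le\frac1{qQ}$, and write $S_3(\alpha)=\frac{\mu(q)}{\varphi(q)}v_3(\beta)+\Delta_3(\alpha)$.
\begin{itemize}
\item The first term is at most $U/\varphi(q)\ll U\log^{-c_1+\varepsilon}N$, since $q>P=\log^{c_1}N$.
\item The second term is bounded by Lemma~\ref{Lemma: Generating function} together with Lemma~\ref{lemma: all GRH}. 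Under GRH the latter holds for \emph{every} modulus $q$, not only $q\le\log^\ell x$. This gives $|\Delta_3(\alpha)|\ll\big(1+\frac{U}{qQ}\big)\sqrt{qU}\log^2(qU)$ with $Q=U/\log^{c_2}N$.
\end{itemize}
For suitable $c_1<c_2$ this is $o(U/\log N)$ once $U$ is a large enough fixed power of $\log N$. The exponent is set by balancing two pieces, so keep both in your bookkeeping: $\sqrt{qU}$, which is largest for $q$ near $Q$, and $\frac{U}{qQ}\sqrt{qU}$, which is largest at $q=1$ and also controls $\Delta_3$ on the major arcs.

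On the major arcs the paper handles $\Delta_2$ by Gallagher's lemma and the GRH mean-square Lemma~\ref{lemma: almost all GRH}. It does not estimate a third error term involving $\Delta_1$. Instead it extends the $\beta$-integral of $S_1v_2v_3$ to the whole circle and counts $n_2+n_3=N-p_1$ exactly, obtaining $U\,\e(-Na/q)S_1(a/q)+\cO(U^2)$. This is where $U\le y/\log^{\varepsilon}N$ is used. Evaluating $S_1(a/q)$ with the pointwise GRH bound is where $y\ge\sqrt N\log^{2+\varepsilon}N$ enters.

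Your reduction from $\cR$ to $R$ matches Lemma~\ref{lemma: weighted reps} in spirit. The claim that $p_3\le U/2$ contributes a bounded multiple of the dyadic part is unjustified, but it is also unnecessary: the paper cuts at $U/\log^{1+c}N$ and bounds the rest trivially.

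\textbf{Why the replacement route fails.} As you observe, a Languasco--Perelli type mean-square bound for the binary error $E(n)$ over a long range of $n$ loses too much under Cauchy--Schwarz over the $\asymp U/\log U$ values $n=N-p_3$. What would suffice is $\sum_{N-U\le n<N}|E(n)|^2=o(Uy^2/\log U)$ over that specific window, whose length may be only $\log^{4+\varepsilon}N$. That is a short-interval binary Goldbach statement of essentially the same depth as the theorem. The proposal neither states it precisely nor proves it.

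The figure $\sqrt U\,y\log^2N$ is not derived from any stated estimate. A per-$n$ error of size $\sqrt y\log^2N$ would be a pointwise binary Goldbach asymptotic, which GRH does not give. It would also yield $U\sqrt y\log^2N$, not $\sqrt U\,y\log^2N$. And ``for most $n$'' in a long range says nothing about the sparse set $\{N-p_3\}$. You leave ``securing this per-$n$ saving uniformly enough'' as the chief technical work, so the central estimate is missing.

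Establishing the bound for a positive proportion of $p_3$ could at best give the lower bound, not the upper bound. It is also not why the result is $\asymp$ rather than $\sim$. As your first paragraph already notes, and as in Lemma~\ref{lemma: weighted reps}, that comes from $\log p_3$ not being $\sim\log U$ when $U$ is only a power of $\log N$.
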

Note again that $\mathfrak{S}(N)=0$ for even $N$, and $\mathfrak{S}(N)> \frac{1}{2}$ for odd $N$. Furthermore, we may turn this into an asymptotic formula by taking e.g. $U<p_3\leq 2U$ instead of $p_3\leq U$.
\begin{corollary}
    Assume Condition \ref{con: zero density}. Every sufficiently large odd integer $N$ is the sum of three primes
    \begin{equation*}
        N=p_1+p_2+p_3,
    \end{equation*}
    where $p_3\leq U=\log^{4+\varepsilon} N$ for any $\varepsilon>0$.
\end{corollary}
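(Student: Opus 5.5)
The plan is to specialise Theorem \ref{thm: main GRH} to one fixed choice of $y$ and a $U$ at the bottom of its admissible range, and then use the lower bound in the $\asymp$ to conclude that $\cR(N,y,U)$ is positive. One remark about the hypothesis first. The bound $p_3\le\log^{4+\varepsilon}N$ can only come from Theorem \ref{thm: main GRH}, so the hypothesis really used is \nameref{con: GRH}. Condition \ref{con: zero density} alone gives only Corollary \ref{cor: main density 1}, since Theorem \ref{thm: main zero density} needs $U\ge y^{1-\frac{1}{A}}$ with $A\ge 2$. I therefore read ``Condition \ref{con: zero density}'' in the statement as \nameref{con: GRH} and argue under it.

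Fix $\varepsilon>0$, let $N$ be a large odd integer, and choose $y=N/4$ and $U=\log^{4+\varepsilon}N$. We check the ranges required by Theorem \ref{thm: main GRH}.
\begin{itemize}
\item For large $N$ we have $\sqrt N\log^{2+\varepsilon}N\le N/4\le N/2$, so $y$ is admissible.
\item $U=\log^{4+\varepsilon}N\le (N/4)/\log^{\varepsilon}N=y/\log^{\varepsilon}N$ for large $N$, so $U$ is admissible.
\end{itemize}
The theorem then gives an absolute constant $c>0$ (depending only on $\varepsilon$) such that
\begin{equation*}
\cR(N,N/4,U)\ge c\,\frac{\mathfrak{S}(N)\,U\,N}{\log U\,\log N\,\log N}.
\end{equation*}
Because $N$ is odd, $\mathfrak{S}(N)>\frac12$, as noted after Theorem \ref{thm: main zero density}. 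Hence the right-hand side is at least a positive constant times $N\log^{2+\varepsilon}N/\log\log N$, and in particular it is positive.

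Since $\cR(N,N/4,U)$ counts triples of primes with $p_1+p_2+p_3=N$ and $p_3\le U$, positivity means such a triple exists. Concretely, it has $N/2<p_1\le 3N/4$, $N/4<p_2\le N/2$ and $p_3\le \log^{4+\varepsilon}N$, which is the claim.

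The only step that needs care is the one flagged at the start: invoking \nameref{con: GRH} rather than Condition \ref{con: zero density}, and confirming that $U=\log^{4+\varepsilon}N$ lies inside the range of Theorem \ref{thm: main GRH} for one fixed $y$. Everything else is immediate from that theorem.
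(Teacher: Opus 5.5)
Your proposal is correct and is the intended argument: the corollary is an immediate specialisation of Theorem \ref{thm: main GRH} at the bottom of its $U$-range with any admissible $y$ (e.g.\ $y=N/4$), where $\mathfrak{S}(N)>\frac12$ for odd $N$ forces $\cR(N,y,U)>0$. You are also right that ``Condition \ref{con: zero density}'' in the statement must be read as \nameref{con: GRH}, as the abstract confirms, since Condition \ref{con: zero density} alone (even with $A=2$) only gives $U=\exp(\log^{\frac{2}{3}+\varepsilon}N)$.
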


\section{Preliminary Results}

\subsection{Weighted representation functions}
The functions $\cR$ and $R$ are asymptotically related. We show their relation in this section.

\begin{lemma}\label{lemma: weighted reps}
    Let $\exp(\log^\varepsilon N)\leq U \leq y\leq \frac{N}{2}$ for any $0<\varepsilon<1$ and let $c>0$, we have
    \begin{equation*}\label{eq: R to mathcal R}
        \cR(N,y,U) = \bigg(1 + \cO\bigg( \frac{\log\log N}{\log^\varepsilon N}\bigg)\bigg) \frac{R_3(N,y,U)}{\log U \log y\log N}+\cO\bigg(\frac{Uy}{\log U\log y \log^{1+c} N}\bigg).
    \end{equation*}
    If instead we take $\log^\varepsilon N\leq U \leq y\leq \frac{N}{2}$ for any $1<1+c<\varepsilon$, we have
    \begin{equation*}
        \bigg|\cR(N,y,U)-\frac{R_3(N,y,U)}{\log U \log y\log N}\bigg| \leq \bigg(\frac{1+c}{\varepsilon} + \cO\bigg( \frac{\log\log y}{\log y}\bigg)\bigg) \frac{R_3(N,y,U)}{\log U \log y\log N}+\cO\bigg(\frac{Uy}{\log U\log y \log^{1+c} N}\bigg).
    \end{equation*}
\end{lemma}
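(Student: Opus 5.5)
The approach is to compare the weight $\log p_1\log p_2\log p_3$ with the constant $\log N\log y\log U$ term by term. I read $R_3$ as the weighted count $R$ defined above. In every representation counted, $p_1\in(N-2y,N-y]$, so
\[
\log p_1=\log N+\cO\Big(\frac{y}{N}\Big)+\cO(1)\cdot[\,y\text{ near }N/2\,].
\]
More usefully, we always have $N/2\le N-2y<p_1\le N$ when $y\le N/4$. For general $y\le N/2$, the tail where $p_1$ is much smaller than $N$ is handled by the error term; there, $\log p_1\ge \log N-\cO(\log\log N)$ fails only on a set of $p_1$ of size $\ll N/\log^{C}N$. Likewise $\log p_2=\log y+\cO(1)$. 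The delicate factor is $\log p_3$, which ranges over $[\log 2,\log U]$.

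The plan is to split the $p_3$-sum at $U_0=U/\log^{C}N$, with $C$ chosen according to $c$. For $U_0<p_3\le U$ we have $\log p_3=\log U\,(1+\cO(\log\log N/\log U))$. In the first regime, $\log U\ge\log^\varepsilon N$, so this factor is $1+\cO(\log\log N/\log^\varepsilon N)$. Combined with the $p_1,p_2$ factors, whose relative errors are $\cO(1/\log y)$ and $\cO(\log\log N/\log N)$, smaller since $\log y\ge\log U\ge\log^\varepsilon N$, this gives the main term: on this range $\cR$ equals $(1+\cO(\log\log N/\log^\varepsilon N))\,R/(\log U\log y\log N)$. The contribution of $p_3\le U_0$ to both $\cR$ and $R/(\log U\log y\log N)$ is bounded trivially. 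The number of triples with $p_3\le U_0$ is at most $\pi(U_0)$ times the number of primes $p_2\in(y,2y]$, since $p_1$ is then determined. This is $\ll U_0 y/(\log U\log y)\ll Uy/(\log U\log y\log^{C}N)$, which is within the stated error once $C\ge 1+c$. The weighted version is similar.

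For the second regime, $\log U\ge\varepsilon\log\log N$ only, so $\log\log N/\log U$ is no longer small. Instead I would take $U_0=U/\log^{1+c}N$. Then for $U_0<p_3\le U$,
\[
\log p_3\ge\log U-(1+c)\log\log N\ge\Big(1-\frac{1+c}{\varepsilon}\Big)\log U .
\]
Hence $1\le \log U/\log p_3\le 1+\frac{1+c}{\varepsilon}+\cdots$. This yields the two-sided inequality
\[
\bigg|\cR-\frac{R}{\log U\log y\log N}\bigg|\le\Big(\frac{1+c}{\varepsilon}+\cO\Big(\frac{\log\log y}{\log y}\Big)\Big)\frac{R}{\log U\log y\log N},
\]
where the $\log\log y/\log y$ accounts for the $p_1,p_2$ factors. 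Strictly, the upper bound uses $\log U/\log p_3-1\le \frac{(1+c)/\varepsilon}{1-(1+c)/\varepsilon}$. To get the clean constant $(1+c)/\varepsilon$ I would compare $\cR$ with $R$ weighted by $\log U$, so that the discrepancy is $\log U-\log p_3\le(1+c)\log\log N$ over $\log U$. The small primes $p_3\le U_0$ again go into the $\cO(Uy/(\log U\log y\log^{1+c}N))$ term.

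The main obstacle is bookkeeping rather than depth. Controlling $\log p_1$ when $y$ is close to $N/2$ needs care: $p_1$ can be as small as about $0$ only in a negligible range. I would handle this by noting that $p_1=N-p_2-p_3\ge N-2y-U$, and when $N-2y$ is small I would absorb those $p_1\le N/\log^{C}N$ into the error term by the same trivial count. Beyond that, the constants in the second regime must be tracked carefully so that the factor is exactly $(1+c)/\varepsilon$.
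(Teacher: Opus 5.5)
Your argument is the paper's argument. The ingredients match. First, $\log p_2=\log y+\cO(1)$. Second, $\log p_1=\log N+\cO(\log\log N)$ once the primes $p_1\le N/\log^{1+c}N$ are discarded; these occur only when $y\gg N$, and the trivial count $\pi(U)\,\pi(N/\log^{1+c}N)$ handles them. Third, $p_3$ is split at $U/\log^{1+c}N$, and the small $p_3$ give at most $\pi(U/\log^{1+c}N)\,\pi(2y)$ triples, which is exactly the $\cO\big(Uy/(\log U\log y\log^{1+c}N)\big)$ term. Reading $R_3$ as $R$ is right, and your first regime is fine.

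The step that does not work is your fix for the constant in the second regime. For a surviving triple put $w=\frac{\log p_1\log p_2\log p_3}{\log N\log y\log U}$. Measuring $\log U-\log p_3\le(1+c)\log\log N$ against $\log U$ gives $1-w\le\frac{1+c}{\varepsilon}+\cO\big(\frac{\log\log y}{\log y}\big)$. Summing then puts $\frac{1+c}{\varepsilon}\,\cR$ on the right-hand side, not $\frac{1+c}{\varepsilon}\cdot\frac{R}{\log U\log y\log N}$. To have $R$ there you need $1-w\le\theta w$ termwise. Since $w$ can be as small as about $1-\frac{1+c}{\varepsilon}$ (take $U=\log^{\varepsilon}N$ and $p_3$ just above $U/\log^{1+c}N$), this forces $\theta\ge\frac{1+c}{\varepsilon-1-c}$. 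Nor can you move the split point closer to $U$: the trivial count of the small $p_3$ would then exceed the stated error term.

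The paper's proof has the same slip. Its bound $\frac{\log C}{\log^2 U}R$ for $\big|R/\log U-\sum\log p_1\log p_2\big|$, with $C=\log^{1+c}N$, should read $\frac{\log C}{\log U\log(U/C)}R$. So in the second regime this method actually gives the inequality with $\frac{1+c}{\varepsilon-1-c}$ in place of $\frac{1+c}{\varepsilon}$, or equivalently with $\frac{1+c}{\varepsilon}$ but $\cR$ on the right. Either version still yields $\cR\asymp R/(\log U\log y\log N)$ up to the error term, which is all that Theorem~\ref{thm: main GRH} uses.
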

\begin{proof}
    Let $C=\log^{1+c}N$ for any $c>0$. Since $\log p_2 =\log y + \cO(1)$ for $y<p_2\leq 2y$, we have
    \begin{equation*}
    \begin{split}
        \cR(N,y,U)=&\bigg(\frac{1}{\log y} +\cO\bigg(\frac{1}{\log^2 y}\bigg)\bigg)\sum_{\substack{p_1+p_2+p_3=N\\\max(N-2y,\frac{N}{C})<p_1\leq N-y\\y<p_2\leq 2y\\p_3\leq U}}\log p_2 \\
        &+\mathcal{O}\bigg(\sum_{p_3\leq U}1\sum_{\substack{N-2y<p_1\leq \max(N-2y,\frac{N}{C})}}1  \bigg).
    \end{split}
    \end{equation*}
    Since $N-2y\leq \frac{N}{C}$ implies $y\gg N$, the sum over $p_1$ in the error term is $\cO(\frac{y}{C\log y})$ due to the prime number theorem. Furthermore, for $N/C\leq n\leq N$ we have $\log n= \log N +\mathcal{O}(\log C)$, thus we obtain
    \begin{equation}\label{eq: weight 1}
    \begin{split}
        \cR(N,y,U)=&\bigg(\frac{1}{\log y} +\cO\bigg(\frac{1}{\log^2 y}\bigg)\bigg)\bigg(\frac{1}{\log N} +\cO\bigg(\frac{\log C}{\log^2 N}\bigg)\bigg)\sum_{\substack{p_1+p_2+p_3=N\\\max(N-2y,\frac{N}{C})<p_1\leq N-y\\y<p_2\leq 2y\\p_3\leq U}} \log p_1\log p_2\\
        &+\mathcal{O}\bigg(\frac{Uy}{\log U\log y\log^{1+c}N} \bigg)\\
        =&\bigg(\frac{1}{\log y} +\cO\bigg(\frac{1}{\log^2 y}\bigg)\bigg)\bigg(\frac{1}{\log N} +\cO\bigg(\frac{\log C}{\log^2 N}\bigg)\bigg)\sum_{\substack{p_1+p_2+p_3=N\\N-2y<p_1\leq N-y\\y<p_2\leq 2y\\p_3\leq U}} \log p_1\log p_2\\
        &+\mathcal{O}\bigg(\frac{Uy}{\log U\log y\log^{1+c}N} \bigg).
    \end{split}
    \end{equation}

    Similarly, for $\frac{U}{C}<p_3\leq U$, we have $|\log U -\log p_3|\leq \log C $, and therefore
    \begin{equation}\label{eq: weight 2}
    \begin{split}
        \bigg|\frac{R(N,y,U)}{\log U}- \sum_{\substack{p_1+p_2+p_3=N\\N-2y<p_1\leq N-y\\y<p_2\leq 2y\\p_3\leq U}} \log p_1\log p_2\bigg|\leq \frac{\log C}{\log^2 U} R(N,y,U) +\cO\bigg(\frac{Uy}{\log U\log^{c}N}\bigg).
    \end{split}
    \end{equation}
    We finish the proof by combining Equations \eqref{eq: weight 1} \& \eqref{eq: weight 2} and inserting the respective lower bounds for $U$.
\end{proof}
Note that Lemma \ref{lemma: weighted reps} could yield an asymptotic formula for all $U\geq \log^{\varepsilon}N$ with any $\varepsilon>0$, if one could show the upper bound
\begin{equation*}
    \sum_{\substack{p_1+p_2=N\\p_2\leq y}}1\ll\frac{y}{\log y\log N}.
\end{equation*}
Furthermore we immediately obtain an asymptotic formula for e.g. $\cR(N,y,U)-\cR(N,y,U/2)$ from Equation \eqref{eq: weight 1}.

\subsection{Circle Method}

In order to evaluate $R$, we apply the Hardy-Littlewood circle method. By making use of an orthogonality relation, we may write discrete sums as an integral over a unit interval $\R/\Z$. We may then evaluate said integral on subsets of the unit interval.

\begin{lemma}[Orthogonality relation]\label{lemma: orth relation}
    Let $k\in\Z$ and let $\mathfrak{U}=\R/\Z$ be any unit interval. We have
    \begin{equation*}
        \int_\mathfrak{U} \e(k)\diff\alpha=\begin{cases}
            1 & \text{if } k=0,\\
            0 & \text{otherwise,}
        \end{cases}
    \end{equation*}
    where we denote $\e_q(\alpha):=e^{2\pi i \alpha/q}$ for any $\alpha \in \R$ and $q\in\N$. We may omit the subscript for $q=1$.
\end{lemma}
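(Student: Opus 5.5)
The statement is the orthogonality relation. Read literally, $\e(k)=e^{2\pi i k}$ does not depend on $\alpha$, so the integrand must be $\e(k\alpha)=e^{2\pi i k\alpha}$, and that is the reading I will prove. The plan is a direct computation over one period. Here $\mathfrak{U}$ is any interval of length one, say $[a,a+1)$ with $a\in\R$.

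First, the case $k=0$. The integrand is identically $1$, so the integral equals the length of $\mathfrak{U}$, which is $1$.

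Second, the case $k\neq 0$. The function $\alpha\mapsto \e(k\alpha)$ has the antiderivative $\e(k\alpha)/(2\pi i k)$. Therefore
\begin{equation*}
    \int_a^{a+1}\e(k\alpha)\diff\alpha=\frac{\e(k(a+1))-\e(ka)}{2\pi i k}=\frac{\e(ka)\big(\e(k)-1\big)}{2\pi i k}=0,
\end{equation*}
because $\e(k)=e^{2\pi i k}=1$ for every integer $k$.

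Third, the choice of interval is irrelevant. Since $\e(k\alpha)$ is $1$-periodic in $\alpha$ for integer $k$, it is a well-defined function on $\R/\Z$. Its integral over any fundamental domain of length one is therefore the same, which is why the statement may refer to $\mathfrak{U}$ as an arbitrary unit interval.

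There is no genuine obstacle here. The only care needed is to interpret the integrand as $\e(k\alpha)$, and to note that integrality of $k$ is exactly what gives both the periodicity and the vanishing boundary term.
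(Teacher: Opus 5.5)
Your proof is correct, and it is the standard direct computation the paper intends; the paper gives no argument of its own, calling the proof trivial and leaving it as an exercise. You are also right that the integrand must be read as $\e(k\alpha)$ rather than $\e(k)$, which is how the lemma is used when $R(N,y,U)$ is written as an integral.
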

\begin{proof}
    The proof is trivial and left as an exercise to the reader.
\end{proof}
We may now use Lemma \ref{lemma: orth relation} to write $R$ as an integral of generating functions.
\begin{lemma} \label{lemma: R to integral}
    Let $\fU=\R/\Z$ be any unit interval. We have
    \begin{equation*}\label{eq: RN}
        R(N,y,U)=\int_\fU S_1(\alpha)S_2(\alpha) S_3(\alpha) \e(-N\alpha) \diff \alpha,
    \end{equation*}
    where 
    \begin{equation*}
    \begin{split}
        S_1(\alpha):=&S_1(N,y,\alpha):=S(N-y,y,\alpha),\\
        S_2(\alpha):=&S_2(y,\alpha):=S(2y,y,\alpha),\\
        S_3(\alpha):=&S_3(U,\alpha):=S(U,U,\alpha),
    \end{split}
    \end{equation*}
    with
    \begin{equation*}\label{eq: generating function}
        S(x,h,\alpha):=\sum_{x-h<p\leq x}\log p\thinspace\e(p\alpha ).
    \end{equation*}
    
\end{lemma}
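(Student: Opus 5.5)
We expand the product of the three generating functions into a triple sum over primes and then integrate term by term. By the definition of $S(x,h,\alpha)$ we have
\begin{equation*}
    S_1(\alpha)=\sum_{N-2y<p_1\leq N-y}\log p_1\,\e(p_1\alpha),\qquad S_2(\alpha)=\sum_{y<p_2\leq 2y}\log p_2\,\e(p_2\alpha),\qquad S_3(\alpha)=\sum_{p_3\leq U}\log p_3\,\e(p_3\alpha),
\end{equation*}
where for $S_3$ we note that $U-U=0<p_3\leq U$ covers every prime up to $U$. Each of these is a finite sum. Multiplying them out and including the factor $\e(-N\alpha)$ gives a finite sum:
\begin{equation*}
    S_1(\alpha)S_2(\alpha)S_3(\alpha)\e(-N\alpha)=\sum_{\substack{N-2y<p_1\leq N-y\\ y<p_2\leq 2y\\ p_3\leq U}}\log p_1\log p_2\log p_3\;\e\big((p_1+p_2+p_3-N)\alpha\big).
\end{equation*}

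Since the sum is finite, we may interchange summation and integration over the unit interval $\fU$ freely. For each triple we apply Lemma \ref{lemma: orth relation} with $k=p_1+p_2+p_3-N\in\Z$. The integral equals $1$ exactly when $p_1+p_2+p_3=N$ and vanishes otherwise.

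The surviving terms are precisely the weighted triples in the definition of $R(N,y,U)$, so the identity follows. The result does not depend on which unit interval $\fU$ is chosen, because the integrand is $1$-periodic.

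There is no real obstacle here. The only care needed is to check that the summation ranges encoded by $S(N-y,y,\cdot)$, $S(2y,y,\cdot)$ and $S(U,U,\cdot)$ match the ranges $N-2y<p_1\leq N-y$, $y<p_2\leq 2y$ and $p_3\leq U$ in the definition of $R$.
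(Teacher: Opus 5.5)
Your proof is correct and matches the paper's argument: the paper also applies the orthogonality relation (Lemma \ref{lemma: orth relation}, with $k=p_1+p_2+p_3-N$) to each triple of primes and then interchanges the finite sum with the integral. Your explicit check that the ranges of $S_1,S_2,S_3$ match the summation conditions in $R(N,y,U)$ is bookkeeping the paper leaves implicit.
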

\begin{proof}
    We apply Lemma \ref{lemma: orth relation} to obtain
    \begin{equation*}\label{eq: ternary rep as int}
        \begin{split}
            R(N,y,U)=\sum_{\substack{N-2y<p_1\leq N-y\\ y<p_2\leq 2y\\p_3\leq U}}\log p_1\thinspace\log p_2 \thinspace\log p_3\int_\mathfrak{U} \e(\alpha(p_1+p_2+p_3-N))\diff\alpha.
        \end{split}
    \end{equation*}
    The Lemma follows immediately by exchanging the order of integration and summation.
\end{proof}

Hardy and Littlewood found that the generating function $S(x,h,\alpha)$ is large whenever $\alpha$ is close to a rational number $a/q$ with a small denominator $q$. Using Dirichlet's approximation theorem we may split up the unit interval $\fU$ into smaller intervals around rational points.

\begin{theorem}[Dirichlet, 1842]\label{thm: Dirichlet}
    Let $\alpha,Q\in\R$ with $Q>1$. There exist coprime integers $(a,q)=1$, such that $1\leq q\leq Q$ and 
    \begin{equation*}
        \bigg|\alpha -\frac{a}{q} \bigg|\leq \frac{1}{qQ}.
    \end{equation*}
\end{theorem}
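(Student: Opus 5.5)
The plan is the classical continued-fraction or pigeonhole argument. Fix $\alpha$ and $Q>1$, and set $M:=\lfloor Q\rfloor$, so that $M\geq 1$ and $M+1>Q$.

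First I apply the pigeonhole principle to the $M+2$ numbers
\begin{equation*}
    0,\ \{\alpha\},\ \{2\alpha\},\ \dots,\ \{M\alpha\},\ 1,
\end{equation*}
where $\{\cdot\}$ denotes the fractional part. All of them lie in $[0,1]$. I split $[0,1]$ into the $M+1$ intervals $[j/(M+1),(j+1)/(M+1))$ for $0\leq j\leq M-1$, together with $[M/(M+1),1]$. With $M+2$ points and $M+1$ intervals, two of the points lie in the same interval. Those two points have the form $\{k_1\alpha\}$ and $\{k_2\alpha\}$ with $0\leq k_1<k_2\leq M$, where the endpoint $1$ is read as $\{0\cdot\alpha\}+1$.

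In either case there is an integer $a'$ such that
\begin{equation*}
    |(k_2-k_1)\alpha-a'|\leq \frac{1}{M+1}.
\end{equation*}
Setting $q':=k_2-k_1$, we get $1\leq q'\leq M\leq Q$ and
\begin{equation*}
    |q'\alpha-a'|\leq \frac{1}{M+1}<\frac{1}{Q}.
\end{equation*}
Dividing by $q'$ gives $|\alpha-a'/q'|<1/(q'Q)$.

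Next I make the fraction coprime. Let $d=(a',q')$, and put $a=a'/d$ and $q=q'/d$. Then $(a,q)=1$ and $1\leq q\leq q'\leq Q$. Moreover
\begin{equation*}
    \bigg|\alpha-\frac{a}{q}\bigg|=\bigg|\alpha-\frac{a'}{q'}\bigg|<\frac{1}{q'Q}\leq\frac{1}{qQ},
\end{equation*}
which is the statement.

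The only delicate step is the boundary bookkeeping. The endpoint $1$ has to be included as an extra pigeon so that there are $M+2$ points rather than $M+1$. One also needs $1/(M+1)\leq 1/Q$, which holds because $M+1=\lfloor Q\rfloor+1>Q$. This is also why a non-integer $Q$ causes no problem. Everything else is routine.
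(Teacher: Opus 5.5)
Your proof is correct and is the standard pigeonhole argument. The paper gives no proof of its own; it only cites Schmidt's \emph{Diophantine Approximation}, Theorem 1A, whose proof is this same pigeonhole argument and handles non-integer $Q$ by passing to $\lfloor Q\rfloor+1$ boxes, just as you do. Your final reduction to lowest terms is a worthwhile addition, since the cited version does not assert $(a,q)=1$, and it is harmless because $q\le q'$ only enlarges the bound $1/(qQ)$.
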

\begin{proof}
    See e.g. \cite[Theorem 1A]{Sch80}.
\end{proof}

Let $2<Q:=\frac{U}{\log^{c_2}N}$ for some $c_2>0$ be a real, positive constant depending on $U$ and $N$. For every \mbox{$1\leq a\leq q \leq Q$} with $(a,q)=1$ we define the \textit{arcs}
\begin{equation*}
    \fM(q,a) := \bigg\{ \alpha\in\R : |\alpha-a/q|\leq \frac{1}{qQ} \bigg\}.
\end{equation*}
Theorem \ref{thm: Dirichlet} implies that the union of the arcs $\fM(q,a)$ is a covering of the unit interval
\begin{equation*}
    \fU :=(Q^{-1},1+Q^{-1}] \subset \bigcup_{q\leq Q}\bigcup_{\substack{a=1\\(a,q)=1}}^q \fM(q,a).
\end{equation*}
Let $1\leq P:=\log^{c_1}N\leq \frac{Q}{2}$ for some $0<c_1<c_2$. We canonically define the \textit{major arcs} $\fM$ by
\begin{equation*}
    \fM := \bigcup_{q\leq P}\bigcup_{\substack{a=1\\(a,q)=1}}^q \fM(q,a).
\end{equation*}

\begin{lemma}\label{lemma: disjoint major arcs}
    The arcs $\fM(q,a)\subset \fM$ with $1\leq a\leq q\leq P<\frac{Q}{2}$ and $(a,q)=1$ are disjoint and a subset of the unit interval $\fU$.
\end{lemma}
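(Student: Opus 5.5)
We need to prove two things: that distinct arcs $\fM(q,a)$ with $q\le P$ are pairwise disjoint, and that each of them lies in $\fU=(Q^{-1},1+Q^{-1}]$. Both follow from elementary spacing estimates between Farey-type fractions combined with the inequality $P<Q/2$.

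\emph{Disjointness.} Take two distinct pairs $(q,a)\neq(q',a')$ with $1\le a\le q\le P$, $1\le a'\le q'\le P$ and $(a,q)=(a',q')=1$. Since both fractions are reduced, $a/q\neq a'/q'$. Hence $|aq'-a'q|\ge 1$, which gives
\begin{equation*}
    \bigg|\frac{a}{q}-\frac{a'}{q'}\bigg|\geq \frac{1}{qq'}.
\end{equation*}
The arcs have half-widths $\frac{1}{qQ}$ and $\frac{1}{q'Q}$. If some $\alpha$ lay in both arcs, the triangle inequality would give
\begin{equation*}
    \frac{1}{qq'}\leq \frac{1}{qQ}+\frac{1}{q'Q}=\frac{q+q'}{qq'Q}.
\end{equation*}
This forces $Q\le q+q'\le 2P$, contradicting $P<Q/2$. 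So the arcs are disjoint.

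\emph{Containment in $\fU$.} Fix an arc $\fM(q,a)$ and consider its two endpoints.
\begin{itemize}
\item The left endpoint satisfies $a/q-\frac{1}{qQ}\ge \frac1q-\frac{1}{qQ}$. This exceeds $Q^{-1}$ exactly when $Q-1>q$, which holds because $q\le P<Q/2\le Q-1$ once $Q\ge2$ (and $Q>2$ by assumption).
\item The right endpoint satisfies $a/q+\frac{1}{qQ}\le 1+\frac{1}{qQ}\le 1+Q^{-1}$, since $a\le q$.
\end{itemize}
Hence $\fM(q,a)\subset\fU$.

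The one point needing care is the case $a=q$, which by coprimality only occurs for $q=1$, giving the arc around $1$. There the arc is $[1-Q^{-1},1+Q^{-1}]$: its left end $1-Q^{-1}>Q^{-1}$ since $Q>2$, and its right end $1+Q^{-1}$ is included because $\fU$ is closed on the right. No other step poses a real obstacle.
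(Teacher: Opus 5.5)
Your proof is correct and follows essentially the same route as the paper: disjointness comes from the Farey spacing $|a/q-a'/q'|\ge 1/(qq')$ combined with $q,q'\le P<Q/2$, and containment in $\fU$ comes from checking arc endpoints. The only difference is in the containment step: the paper checks just the extremal arcs $\fM(\lfloor P\rfloor,1)$ and $\fM(1,1)$, implicitly using that disjoint arcs are ordered like their centres, whereas you bound the endpoints of every arc directly via $1\le a\le q$, which is slightly more self-contained.
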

\begin{proof}
    Let $\frac{a}{q}\neq \frac{a'}{q'}$ and without loss of generality $q\leq q'$, we have
    \begin{equation*}
        \bigg| \frac{a}{q}-\frac{a'}{q'}\bigg|\geq \frac{1}{qq'}\geq \frac{2}{qQ}\geq \frac{1}{qQ}+\frac{1}{q'Q}.
    \end{equation*}
    Thus the major arcs are disjoint for $ q'\leq \frac{Q}{2}$. Furthermore we have $\fM\subset \fU$, since $\alpha> \frac{1}{Q}$ for all $\alpha\in\fM(\floor*{P},1)$ and $\alpha\leq 1+\frac{1}{Q}$ for all $\alpha\in\fM(1,1)$.
\end{proof}

We canonically define the \textit{minor arcs} by $\fm := \fU\setminus\fM$. Theorem \ref{thm: Dirichlet} gives us a covering of the minor arcs $\fm'\supseteq \fm$ defined by
\begin{equation*}
\begin{split}
    \fm'=  \bigcup_{P< q\leq Q}\bigcup_{\substack{a=1\\(a,q)=1}}^q \fM(q,a).
\end{split}
\end{equation*}

\begin{lemma}\label{lemma: R major minor split}
    We have
\begin{equation*}
\begin{split}
    R(N,y,U)= R_\fM +R_{\fm}
\end{split}
\end{equation*}
where
\begin{equation*}
    R_\fI:=R_\fI(N,y,U) :=\int_\mathfrak{I} S_1(\alpha)S_2(\alpha) S_3(\alpha) \e(-N\alpha)\diff \alpha,
\end{equation*}
for any $\mathfrak{I}\subseteq\mathfrak{U}$. 
\end{lemma}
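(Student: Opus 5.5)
This is essentially immediate from Lemma \ref{lemma: R to integral}, and the plan is to reduce it to additivity of the integral. First, the integrand $S_1(\alpha)S_2(\alpha)S_3(\alpha)\e(-N\alpha)$ has period $1$: each $S_i$ is a finite sum of terms $\log p\,\e(p\alpha)$ with integer $p$, and $\e(-N\alpha)$ is $1$-periodic because $N\in\Z$. Hence Lemma \ref{lemma: R to integral} may be applied with the particular unit interval $\fU=(Q^{-1},1+Q^{-1}]$ fixed above, giving
\begin{equation*}
R(N,y,U)=\int_{\fU} S_1(\alpha)S_2(\alpha)S_3(\alpha)\e(-N\alpha)\diff\alpha .
\end{equation*}

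Next, by Lemma \ref{lemma: disjoint major arcs}, the major arcs $\fM$ form a finite union of closed intervals contained in $\fU$, so $\fM$ is measurable and $\fM\subseteq\fU$. By definition $\fm=\fU\setminus\fM$, so $\fU$ is the disjoint union $\fM\sqcup\fm$ of two measurable sets. The integrand is continuous and bounded on $\fU$, hence integrable. Additivity of the Lebesgue (or Riemann) integral over a disjoint decomposition then gives $\int_\fU=\int_\fM+\int_\fm$, which is precisely $R(N,y,U)=R_\fM+R_\fm$.

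There is no genuine obstacle here. The only point requiring care is that $\fM$ actually lies inside the chosen representative $\fU$ of $\R/\Z$ rather than merely modulo $1$; otherwise the complement $\fm$ would not be the correct set. This is exactly what Lemma \ref{lemma: disjoint major arcs} guarantees. Disjointness of the individual arcs $\fM(q,a)$ is not needed for this identity; it matters only later, when $R_\fM$ is written as a sum over the arcs.
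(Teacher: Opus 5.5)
Your proposal is correct and matches the paper's proof, which likewise combines Lemma \ref{lemma: R to integral} (applied to the unit interval $\fU=(Q^{-1},1+Q^{-1}]$) with the containment $\fM\subseteq\fU$ from Lemma \ref{lemma: disjoint major arcs} and additivity of the integral over $\fU=\fM\sqcup\fm$. You also correctly observe that Theorem \ref{thm: Dirichlet}, which the paper cites here, is not needed for the identity itself, because $\fm$ is defined as the complement $\fU\setminus\fM$; the theorem is only used later, for the covering $\fm'\supseteq\fm$ in the minor-arc bounds.
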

\begin{proof}
    The proof follows immediately from Theorem \ref{thm: Dirichlet} and Lemmas \ref{lemma: R to integral} \& \ref{lemma: disjoint major arcs}. 
\end{proof}

\subsection{Input from the primes}
In this section we discuss the behaviour of the generating function $S(x,h,\alpha)$. For $\alpha:=\frac{a}{q}+\beta\in \fM(q,a)$, we define the error term of the generating function by
\begin{equation*}
    \Delta(x,h,\alpha):= S(x,h,\alpha)-\frac{\mu(q)}{\varphi(q)}v(x,h,\beta),
\end{equation*}
where 
\begin{equation*}
    v(x,h,\beta):=\sum_{x-h<n\leq x}\e(n\beta).
\end{equation*}
For $i\in\{1,2,3\}$, we also denote by $v_i(\beta)$ and $\Delta_i(\alpha)$ the geometric series and error term, respectively, associated to the generating function $S_i(\alpha)$ on the major arcs \mbox{$\alpha=\frac{a}{q}+\beta\in \fM(q,a)\subset \fM$.} For $i\in\{1,2,3\}$, the functions $v_i(\beta)$ and $\Delta_i(\alpha)$ are well-defined on the disjoint major arcs $\fM$ due to \mbox{Lemma \ref{lemma: disjoint major arcs}.} We may reduce $\Delta(x,h,\alpha)$ to the case $\Delta(x,h,\frac{a}{q})$ for $\alpha\in\fM(q,a)$ by the following Lemma.
\begin{lemma}\label{Lemma: Generating function}
    Let $\alpha=\beta+a/q\in\fM(q,a)$. We have
    \begin{equation*}
        |\Delta(x,h,\alpha)|\leq (1+|\beta|h) \max_{0<t\leq h} |\Delta(x,t,\frac{a}{q})|.
    \end{equation*}
\end{lemma}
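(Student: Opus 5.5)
The plan is to write $\Delta(x,h,\alpha)$ as a twisted sum of a single arithmetic sequence and then apply discrete Abel summation against the smooth phase $\e(n\beta)$. Since $\alpha = a/q + \beta$, we have $\e(p\alpha) = \e(pa/q)\,\e(p\beta)$. Define for integers $n$
\begin{equation*}
    c(n) := \mathbf{1}_{n \text{ prime}}\,\log n\;\e(na/q) - \frac{\mu(q)}{\varphi(q)}.
\end{equation*}
Then
\begin{equation*}
    \Delta(x,h,\alpha) = \sum_{x-h<n\leq x} c(n)\,\e(n\beta),
\end{equation*}
while for every $0<t\leq h$ the tail sums satisfy $D(t) := \sum_{x-t<n\leq x} c(n) = \Delta(x,t,\tfrac{a}{q})$. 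The key point is to sum by parts from the \emph{top} endpoint $x$, because the available control is exactly on the tails $D(t)$ anchored at $x$. Write $M := \max_{0<t\leq h}|\Delta(x,t,\tfrac{a}{q})|$.

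Label the integers in $(x-h,x]$ as $n_0 > n_1 > \dots > n_K$, with $n_0=\lfloor x\rfloor$ and $K+1\leq h+1$ terms. Let $D_k := \sum_{j\leq k} c(n_j)$, which is one of the tails $D(t)$ with $0<t\leq h$, so $|D_k|\leq M$. Abel summation then gives
\begin{equation*}
    \Delta(x,h,\alpha) = D_K\,\e(n_K\beta) + \sum_{k=0}^{K-1} D_k\big(\e(n_k\beta)-\e(n_{k+1}\beta)\big).
\end{equation*}
Each difference has modulus $|\e(\beta)-1| = 2|\sin \pi\beta|$, and there are at most $h$ of them. Hence
\begin{equation*}
    |\Delta(x,h,\alpha)| \leq M\big(1 + h\,|\e(\beta)-1|\big).
\end{equation*}
This is the whole structure of the argument; the remaining issue is the constant in front of $|\beta|h$.

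The main obstacle is exactly this constant: the crude bound $|\e(\beta)-1|\leq 2\pi|\beta|$ yields $(1+2\pi|\beta|h)M$, not $(1+|\beta|h)M$. To reach the stated constant, I would first try to exploit cancellation among the consecutive differences rather than bounding each one in absolute value. The differences are $\e(n_{k}\beta)(1-\e(-\beta))$, so after pulling out the common factor $(1-\e(-\beta))$, the sum becomes $(1-\e(-\beta))\sum_k D_k\,\e(n_k\beta)$. I would then try to re-sum this block, for instance in blocks of length about $1/|\beta|$ on which the phase is nearly constant, hoping to trade the factor $2\pi$ for $1$. If this refinement does not go through, the argument still gives the inequality up to an absolute constant factor on $|\beta|h$. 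That is harmless for every later use on the major arcs, where $|\beta|h \leq h/(qQ)$ is only ever used up to constants, and I would flag the constant accordingly.
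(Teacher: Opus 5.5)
Your argument is the paper's proof in discrete form. The paper uses the same coefficients $c(n)$, sums by parts against $\e(\beta n)$ from the top endpoint $x$, and bounds the result by $|\Delta(x,h,\frac{a}{q})|+|\beta|\int_{x-h}^{x}|\Delta(x,t,\frac{a}{q})|\diff t$. Your Abel summation over the tails $D_k$, each of which equals some $\Delta(x,t,\frac{a}{q})$ with $0<t\le h$, is the same computation. It correctly gives $|\Delta(x,h,\alpha)|\le(1+2\pi|\beta|h)M$.

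The factor $2\pi$ is genuine, and the paper does not avoid it either. In its integral term the derivative of $\e(\beta t)$ has modulus $2\pi|\beta|$, not $|\beta|$. So the paper's proof also only establishes $(1+2\pi|\beta|h)M$, and the constant $1$ in the statement is a slip.

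Do not pursue the block-resummation idea, because no argument that uses only the tail bounds $|D_k|\le M$ can reach constant $1$. Take an integer $h\ge 3$, an integer $x$, $\beta=1/h$ and $c(n)=\e(-n/h)$. Every tail sum of $1\le K\le h$ consecutive terms has modulus $|\sin(\pi K/h)|/\sin(\pi/h)\le 1/\sin(\pi/h)$. Yet $\sum_{x-h<n\le x}c(n)\e(n\beta)=h$, and $h>2/\sin(\pi/h)\ge(1+|\beta|h)M$, since $h\sin(\pi/h)\ge 3\sin(\pi/3)>2$.

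The right fix is to state the lemma with $2\pi$, or with $\ll$. As you say, that is all the later estimates use: the bound for $I_1$ and the minor-arc bound under GRH only need the inequality up to an absolute constant.
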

\begin{proof}
    We write $\Delta(x,h,\frac{a}{q})=\sum_{x-h<n\leq x}c(n)$, with the coefficients
    \begin{equation*}
        c(n):=\log n\thinspace\e(an/q)-\frac{\mu(q)}{\varphi(q)},
    \end{equation*}
    if $n$ is a prime and $c(n):=-\frac{\mu(q)}{\varphi(q)}$ otherwise. By partial summation (see e.g. \mbox{\cite[Theorem 4.2]{Apo13})} we have
    \begin{equation*}
    \begin{split}
        \bigg|S(x,h,\alpha)-\frac{\mu(q)}{\varphi(q)}v(x,h,\beta)\bigg|=&\bigg|\sum_{x-h<n\leq x}c(n)\e(\beta n)\bigg| \\
        \leq & \bigg|\Delta(x,h,\frac{a}{q})\bigg|+ |\beta|\int_{x-h}^x\bigg|\Delta(x,t,\frac{a}{q})\bigg|\diff t\\
        \leq& (1+|\beta|h) \max_{0<t\leq h} |\Delta(x,t,\frac{a}{q})|.
    \end{split}
    \end{equation*}
\end{proof}

The following three Lemmas give estimates for the error term $\Delta$ at rational points.
\begin{lemma}\label{lemma: all unconditional}
    Let $0<\varepsilon_1<\varepsilon_2<1$ and let $q\leq \log^\ell x$ for any $\ell>0$. There exists a constant $c>0$ such that for any 
    \begin{equation*}
        h\in \bigg[x\exp\bigg(-\varepsilon_1\frac{c\log^\frac{3}{5}x}{(\log\log x)^\frac{1}{5}}\bigg),x\bigg],
    \end{equation*} we have
    \begin{equation*}
        \Delta(x,h,\frac{a}{q})\ll x\exp\bigg(-\varepsilon_2\frac{c\log^\frac{3}{5}x}{(\log\log x)^\frac{1}{5}}\bigg).
    \end{equation*}
    
\end{lemma}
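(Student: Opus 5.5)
We reduce the sum at the rational point $\frac{a}{q}$ to short-interval sums of $\psi(x,\chi)$ over Dirichlet characters modulo $q$, and then use the explicit formula together with the Vinogradov--Korobov zero-free region. Recall $\Delta(x,h,\frac{a}{q})=\sum_{x-h<p\le x}\log p\,\e(ap/q)-\frac{\mu(q)}{\varphi(q)}\lfloor h\rfloor$, up to $\cO(1)$.

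\textbf{Step 1: reduction to characters.} Replace the sum over primes by the sum of $\Lambda(n)\e(an/q)$ over $x-h<n\le x$. Prime powers contribute $\cO(\sqrt{x}\log x)$, and the finitely many primes dividing $q$ contribute $\cO(\log^2 x)$; both are negligible. For $(n,q)=1$ expand
\begin{equation*}
\e(an/q)=\frac{1}{\varphi(q)}\sum_{\chi\,(q)}\overline{\chi}(a)\tau(\chi)\overline{\chi}(n),
\end{equation*}
where $\tau(\chi)$ is the Gauss sum. This expresses the sum as $\frac{1}{\varphi(q)}\sum_\chi \overline{\chi}(a)\tau(\chi)\big(\psi(x,\overline\chi)-\psi(x-h,\overline\chi)\big)$. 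The principal character has $\tau(\chi_0)=\mu(q)$, so its term gives $\frac{\mu(q)}{\varphi(q)}\big(\psi(x)-\psi(x-h)\big)$. Using $|\tau(\chi)|\le\sqrt q$, we obtain
\begin{equation*}
\Delta(x,h,\tfrac aq)\ll \big|\psi(x)-\psi(x-h)-h\big|+\sqrt q\max_{\chi\ne\chi_0}|\psi(x,\chi)-\psi(x-h,\chi)|+\sqrt x\log x .
\end{equation*}
Here we pass from imprimitive to primitive characters at the cost of $\cO(\log^2(qx))$.

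\textbf{Step 2: explicit formula and zero-free region.} For primitive $\chi$ modulo $q\le\log^\ell x$, apply the truncated explicit formula
\begin{equation*}
\psi(x,\chi)=\delta_\chi x-\sum_{|\gamma|\le T}\frac{x^\rho}{\rho}+\cO\Big(\frac{x\log^2(qxT)}{T}\Big)
\end{equation*}
with $T=\exp\big(\frac{c\log^{3/5}x}{(\log\log x)^{1/5}}\big)$, or a power of it, to be chosen. Within this range Vinogradov--Korobov gives a zero-free region $\beta\le 1-\frac{c_0}{\log^{2/3}(qT)(\log\log qT)^{1/3}}$. Siegel's theorem rules out exceptional zeros, since $q\le\log^\ell x$; this is ineffective, but the statement does not ask for effectivity. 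There are $\cO(T\log qT)$ zeros, so the zero sum is
\begin{equation*}
\ll x\,T\log^2(qT)\exp\Big(-\frac{c_0\log x}{\log^{2/3}T(\log\log T)^{1/3}}\Big).
\end{equation*}
With $\log T\asymp\log^{3/5}x(\log\log x)^{-1/5}$ the exponent is $\asymp\log^{3/5}x(\log\log x)^{-1/5}$ as well, which is the source of the standard shape $\exp(-c\log^{3/5}x(\log\log x)^{-1/5})$. Subtracting the two explicit formulas at $x$ and $x-h$ gives no saving from $h$, but none is needed, because the target error is of size $x$ times the exponential factor. The real role of the lower bound on $h$ is to guarantee that this error is $o(h)$ whenever $\varepsilon_1<\varepsilon_2$, so the requirement is only a consistency condition; I would adjust $c$ to absorb the $\sqrt q$, the $\log$ factors and the constants.

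\textbf{Main obstacle.} The delicate step is choosing the single constant $c$ so that, after absorbing $\sqrt q\le\log^{\ell/2}x$, the factors $T\log^2$ and the implied constants, the bound $x\exp(-\varepsilon_2 c\log^{3/5}x(\log\log x)^{-1/5})$ holds for every $\varepsilon_2<1$. I would first prove the bound with some constant $c'$ in the exponent, using $T$ equal to the full exponential, and then set $c=c'/2$ so that a factor of $\exp(\tfrac12 c'\cdots)$ is available to swallow all polylogarithmic losses. The dependence on $\ell$ enters only through $\sqrt q$ and Siegel's theorem, and is absorbed into the implied constant.
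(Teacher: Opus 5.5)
Your proposal is correct and follows essentially the same route as the paper. The paper's proof is a one-line citation: \cite[Lemmas 3.3 \& 3.4]{Har26} reduce $\Delta(x,h,\frac{a}{q})$ to prime sums over characters or progressions modulo $q$, and \cite[Corollary 2.2]{Har26} supplies the unconditional Siegel--Walfisz bound with the Vinogradov--Korobov exponent. Your Gauss-sum expansion, explicit formula, zero-free region and Siegel's theorem reconstruct exactly these two ingredients.

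You are also right on two points. The lower bound on $h$ plays no role in the bound itself. The constant $c$ can be taken absolute, with the $\ell$-dependence absorbed into the ineffective implied constant. One minor remark: the extra factor $T$ in your zero-sum estimate is wasteful but harmless once $c$ is small.
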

\begin{proof}
    We apply \cite[Lemmas 3.3 \& 3.4]{Har26} to \cite[Corollary 2.2]{Har26}.
\end{proof}
\begin{lemma}\label{lemma: all zero density}
    Assume Condition \ref{con: zero density}. Let $0<\varepsilon_1<\varepsilon_2< 1/3$ and let $q\leq \log^\ell x$ for any $\ell>0$. For any $h\in [x^{1-\frac{1}{A}}\exp(\log^{\frac{2}{3}+\varepsilon_2}x),x]$ we have
    \begin{equation*}
        \Delta(x,h,\frac{a}{q})\ll h \exp(-\log^{\varepsilon_1}x).
    \end{equation*}
\end{lemma}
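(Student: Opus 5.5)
Since $q\le \log^\ell x$ is tiny, I will reduce the estimate for $\Delta(x,h,\frac{a}{q})$ to a short-interval prime number theorem in arithmetic progressions, and prove that through the explicit formula and Condition \ref{con: zero density}.

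\textbf{Reduction to characters.} Write $\e(an/q)$ for $(n,q)=1$ via Gauss sums:
\begin{equation*}
\e(an/q)=\frac{1}{\varphi(q)}\sum_{\chi\,(q)}\overline{\chi}(an)\tau(\chi).
\end{equation*}
Primes dividing $q$ contribute $\cO(\log^{2}x)$ in total. Prime powers also cost only $\cO(\sqrt{x})$, so I can replace $\sum \log p$ by sums of $\Lambda(n)\chi(n)$. The principal character has $\tau(\chi_0)=\mu(q)$, and it produces the main term $\frac{\mu(q)}{\varphi(q)}\big(\psi(x)-\psi(x-h)\big)$. Since $|\tau(\chi)|\le\sqrt q$, this gives
\begin{equation*}
\Big|\Delta(x,h,\tfrac aq)\Big|\le \frac{1}{\varphi(q)}\Big|\psi(x)-\psi(x-h)-h\Big|+\frac{\sqrt q}{\varphi(q)}\sum_{\chi\neq\chi_0}\big|\psi(x,\chi)-\psi(x-h,\chi)\big| + \cO(\sqrt x).
\end{equation*}
Non-primitive characters can be replaced by their inducing primitive characters at a cost of $\cO(\log^2 (qx))$. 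Because $q\le\log^\ell x$, the factor $\sqrt q$ is harmless against a saving of $\exp(-\log^{\varepsilon_1}x)$.

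\textbf{Explicit formula and zero density.} I take the truncated explicit formula with $T$ a small power of $x$, for example $T=x^{1/A}\exp(-\tfrac12\log^{\frac23+\varepsilon_2}x)$ or slightly larger:
\begin{equation*}
\psi(x,\chi)-\psi(x-h,\chi)=\delta_\chi h-\sum_{|\gamma|\le T}\frac{x^\rho-(x-h)^\rho}{\rho}+\cO\Big(\frac{x\log^2(qx)}{T}\Big).
\end{equation*}
This truncation error is $\ll h\exp(-\tfrac12\log^{\frac23+\varepsilon_2}x)$ precisely because $h\ge x^{1-1/A}\exp(\log^{\frac23+\varepsilon_2}x)$. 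For each zero I use
\begin{equation*}
\Big|\frac{x^\rho-(x-h)^\rho}{\rho}\Big|\le h x^{\beta-1}.
\end{equation*}
Summing over all characters and zeros and writing $N^*(\sigma)=\sum_\chi \cN(\sigma,T,\chi)$, Stieltjes integration gives
\begin{equation*}
\sum_\chi\sum_\rho h x^{\beta-1}\ll h\Big(x^{-1/2}N^*(\tfrac12)+\log x\int_{1/2}^{1-\eta} x^{\sigma-1}N^*(\sigma)\diff\sigma\Big).
\end{equation*}
Here $\eta$ is the width of a zero-free region.

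Condition \ref{con: zero density} gives $x^{\sigma-1}N^*(\sigma)\ll \big((qT)^A/x\big)^{1-\sigma}\log^B x$. With the choice of $T$, $(qT)^A/x\le \exp(-c\log^{\frac23+\varepsilon_2}x)$, so the integrand is at most
\begin{equation*}
\exp\big(-c(1-\sigma)\log^{\frac23+\varepsilon_2}x\big)\log^B x.
\end{equation*}

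\textbf{Main obstacle: $\sigma$ close to $1$.} Near $\sigma=1$ the saving from zero density vanishes, so I need the Vinogradov--Korobov zero-free region for $L(s,\chi)$, $\chi$ modulo $q\le\log^\ell x$:
\begin{equation*}
1-\beta\ge \eta:=\frac{c}{\log^{2/3}T(\log\log T)^{1/3}}.
\end{equation*}
This is valid once there is no Siegel zero. For $q\le\log^\ell x$, Siegel's theorem ineffectively gives $1-\beta\gg_\delta q^{-\delta}$, which is far larger than $\eta$, so a Siegel zero is harmless. The zero-free region gives
\begin{equation*}
(1-\sigma)\log^{\frac23+\varepsilon_2}x\ge c\,\log^{\varepsilon_2}x/(\log\log x)^{1/3}\ge\log^{\varepsilon_1}x+ (B+2)\log\log x,
\end{equation*}
using $\varepsilon_1<\varepsilon_2$. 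Hence the whole zero sum is $\ll h\exp(-\log^{\varepsilon_1}x)$, and the principal-character term is handled by the same computation with $q=1$. Matching the exponent $\frac23$ in the zero-free region with the exponent in the lower bound for $h$ is the delicate point, and it is exactly why the hypothesis $\varepsilon_2<\frac13$ and the $\exp(\log^{\frac23+\varepsilon_2}x)$ factor appear. I expect the remaining steps to be routine bookkeeping.
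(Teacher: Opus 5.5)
Your proposal is correct and follows the same route as the paper. The paper's one-line proof cites a short-interval prime number theorem for Dirichlet characters under Condition~\ref{con: zero density}, together with lemmas transferring such bounds to $\Delta(x,h,\frac aq)$ via the character expansion, which are exactly your two steps. The only slip is cosmetic: $\bigl|\frac{x^\rho-(x-h)^\rho}{\rho}\bigr|\le\int_{x-h}^x t^{\beta-1}\diff t$ is bounded by $h(x-h)^{\beta-1}$, not $hx^{\beta-1}$, but for $\beta\ge\frac12$ it is still $\le 4hx^{\beta-1}$ (use $\int_0^x t^{\beta-1}\diff t=x^\beta/\beta$ when $h>x/2$), and the zeros with $\beta\le\frac12$, which $N^*(\frac12)$ does not count, are negligible by trivial counting.
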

\begin{proof}
    We apply \cite[Lemmas 3.3 \& 3.4]{Har26} to \cite[Corollary 2.6]{Har26}.
\end{proof}
\begin{lemma}\label{lemma: all GRH}
    Assume \nameref{con: GRH}, let $q\in\N$ and let $h\in[\sqrt{qx}\log^{2+\varepsilon}qx,x]$ for any $\varepsilon>0$. We have 
    \begin{equation*}
        \Delta(x,h,\frac{a}{q})\ll \sqrt{qx}\log^{2}qx.
    \end{equation*}
    
\end{lemma}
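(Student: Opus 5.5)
The strategy is to reduce $\Delta(x,h,\frac{a}{q})$ to sums over characters and use the explicit formula for $\psi(x,\chi)$ under GRH. First I would pass from primes weighted by $\log p$ to the von Mangoldt function. Write
\begin{equation*}
    \Delta(x,h,\tfrac{a}{q})=\sum_{x-h<n\leq x}\Lambda(n)\e(an/q)-\frac{\mu(q)}{\varphi(q)}\,\#\{x-h<n\leq x\}+\cO(\sqrt{x}).
\end{equation*}
Here the prime-power contribution is $\ll \sqrt{x}\log x$, which is absorbed into the target bound. Next I would split $n$ according to $d=(n,q)$. Terms with $(n,q)>1$ contribute only prime powers $p^k$ with $p\mid q$, giving $\ll \log q\log x$, which is negligible. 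For $(n,q)=1$ I would expand $\e(an/q)$ in Dirichlet characters modulo $q$:
\begin{equation*}
    \e(an/q)=\frac{1}{\varphi(q)}\sum_{\chi\,(q)}\overline{\chi}(an)\,\tau(\chi),\qquad \tau(\chi)=\sum_{b=1}^{q}\chi(b)\e(b/q).
\end{equation*}
This gives
\begin{equation*}
    \sum_{\substack{x-h<n\leq x\\(n,q)=1}}\Lambda(n)\e(an/q)=\frac{1}{\varphi(q)}\sum_{\chi\,(q)}\chi(a)\tau(\overline{\chi})\big(\psi(x,\chi)-\psi(x-h,\chi)\big).
\end{equation*}

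The principal character has $\tau(\chi_0)=\mu(q)$. Its term is $\frac{\mu(q)}{\varphi(q)}(\psi(x,\chi_0)-\psi(x-h,\chi_0))$, which matches the main term $\frac{\mu(q)}{\varphi(q)}h$ up to $\frac{1}{\varphi(q)}|\psi(x)-\psi(x-h)-h|+\cO(\log q\log x)$. For every character, GRH gives the classical explicit-formula bound
\begin{equation*}
    \psi(t,\chi)=\delta_\chi t+\cO(\sqrt{t}\log^2 qt),\qquad t\leq x,
\end{equation*}
so each difference $\psi(x,\chi)-\psi(x-h,\chi)-\delta_\chi h$ is $\ll\sqrt{x}\log^2 qx$. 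Using $|\tau(\chi)|\leq\sqrt{q}$ and summing over the $\varphi(q)$ characters, the non-principal contribution is
\begin{equation*}
    \ll\frac{1}{\varphi(q)}\cdot\varphi(q)\sqrt{q}\cdot\sqrt{x}\log^2qx=\sqrt{qx}\log^2 qx,
\end{equation*}
which is exactly the claimed bound. For imprimitive characters I would either reduce to the inducing primitive character, at a cost of $\cO(\log q\log x)$, or simply use $|\tau(\chi)|\leq\sqrt{q}$, which holds for all characters.

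The hypothesis $h\geq\sqrt{qx}\log^{2+\varepsilon}qx$ is not needed for the upper bound. It only guarantees that the error is $o(h/\varphi(q))$ times a factor, so that the main term dominates. The main obstacle is to get exactly $\log^2$ uniformly in $q$: the bound $\psi(t,\chi)-\delta_\chi t\ll\sqrt{t}\log^2(qt)$ under GRH has to be uniform in $q$ and $t\geq 2$. I would quote it from Davenport, Chapter 20, via the truncated explicit formula with $T=\sqrt{t}$, where the zero count $\ll\log q(T+1)$ per unit interval gives $\sqrt{t}\log^2(qt)$. Alternatively it follows from \cite{Har26}, in the same way as Lemmas \ref{lemma: all unconditional} and \ref{lemma: all zero density}.
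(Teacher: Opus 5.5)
Your proof is correct and follows essentially the same route as the paper. The paper's one-line proof applies \cite[Lemmas 3.3 \& 3.4]{Har26} to \cite[Corollary 2.3]{Har26}, which, as far as one can tell without seeing the companion paper, transfers a GRH bound for character-twisted prime sums in short intervals to $\Delta(x,h,\frac{a}{q})$. You carry out that transfer explicitly via the Gauss-sum expansion, $|\tau(\chi)|\le\sqrt{q}$ and the classical estimate $\psi(t,\chi)-\delta_\chi t\ll\sqrt{t}\log^2 qt$.

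Two minor remarks. For imprimitive $\chi$, passing to the inducing primitive character is what justifies that estimate for $\psi(t,\chi)$, while $|\tau(\chi)|\le\sqrt{q}$ handles the Gauss sum, so these are complementary steps rather than alternatives. You are also right that the lower bound on $h$ is not used for the upper bound.
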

\begin{proof}
    We apply \cite[Lemmas 3.3 \& 3.4]{Har26} to \cite[Corollary 2.3]{Har26}.
\end{proof}
Due to a result by Gallagher \cite{Gal70} we may also reduce the mean squared squared error term of $\Delta$ to error terms on rational points.
\begin{lemma}\label{lemma: Gallagher}
    Let $1\leq a \leq q \leq Q$ with $(a,q)=1$. We have
    \begin{equation*}
        \int_{\fM(q,a)}|\Delta(x,h,\alpha)|^2\diff\alpha\ll \frac{1}{(qQ)^2}\int_{x-h}^x|\Delta(t,\frac{qQ}{2},\frac{a}{q})|^2\diff t.
    \end{equation*}
\end{lemma}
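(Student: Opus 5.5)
This is Gallagher's lemma (the classical bound from \cite{Gal70}), applied to the sequence $c(n)$ introduced in the proof of Lemma \ref{Lemma: Generating function}. The plan is to write $\Delta(x,h,\alpha)$ as a trigonometric polynomial in $\beta=\alpha-\frac{a}{q}$ and bound its mean square over a short interval by a mean square of its short partial sums. Write
\begin{equation*}
    \Delta(x,h,\tfrac{a}{q}+\beta)=\sum_{x-h<n\leq x}c(n)\e(n\beta),
\end{equation*}
where $c(n)$ is as in the proof of Lemma \ref{Lemma: Generating function}. Then $\alpha\in\fM(q,a)$ corresponds exactly to $|\beta|\leq \delta^{-1}$ with $\delta:=qQ$. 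So it suffices to prove Gallagher's inequality in the form
\begin{equation*}
    \int_{|\beta|\leq 1/\delta}\bigg|\sum_n c(n)\e(n\beta)\bigg|^2\diff\beta\ll \frac{1}{\delta^2}\int_{\R}\bigg|\sum_{t-\delta/2<n\leq t}c(n)\bigg|^2\diff t.
\end{equation*}
Here $c(n)$ is supported on $(x-h,x]$, and the right-hand side is then compared with $\int_{x-h}^x|\Delta(t,\delta/2,\frac{a}{q})|^2\diff t$.

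\textbf{Step 1 (the Fourier identity).} Let $C(t):=\sum_{t-\delta/2<n\leq t}c(n)$. Its Fourier transform is computed by swapping sum and integral:
\begin{equation*}
    \int_\R C(t)\e(-t\beta)\diff t=\sum_n c(n)\,\e(-n\beta)\,\frac{1-\e(-\delta\beta/2)}{2\pi i\beta}.
\end{equation*}
The multiplier $\frac{\sin(\pi\delta\beta/2)}{\pi\beta}$ has modulus $\gg\delta$ for $|\beta|\leq 1/\delta$, since $\pi\delta|\beta|/2\leq\pi/2$ and $\sin u\geq 2u/\pi$ there. Hence on the arc
\begin{equation*}
    \bigg|\sum_n c(n)\e(n\beta)\bigg|\ll \delta^{-1}|\hat C(\beta)|.
\end{equation*}
Squaring, integrating over $|\beta|\leq 1/\delta$, extending the range to all of $\R$, and applying Plancherel gives the inequality above.

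\textbf{Step 2 (matching the paper's normalisation).} For $t\in(x-h,x]$, $C(t)$ equals $\Delta(t,\delta/2,\frac{a}{q})$, provided we interpret $\Delta$ at $t$ as the sum of $c(n)$ over $t-\delta/2<n\leq t$. The coefficients with $n\leq x-h$ or $n>x$ have been set to zero, so $C$ is supported on $(x-h,x+\delta/2]$. The main obstacle is this boundary contribution, from $t\in(x,x+\delta/2]$ and from truncated windows near $x-h$. The endpoint intervals have length at most $\delta/2$, and on them $C(t)$ is itself a partial sum of $\Delta$ over a window shorter than $\delta/2$. I would handle this either by allowing the $t$-range in the lemma to be enlarged by $\delta/2$ at each end, which the later applications can absorb, or by treating the lemma as the standard statement of \cite[Lemma 1]{Gal70} (cf.\ Montgomery's \emph{Topics}, Lemma 1.9) with $\Delta$ understood via truncated coefficients. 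In either reading the displayed inequality follows with an absolute implied constant.
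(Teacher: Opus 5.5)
Your argument is the paper's: both apply Gallagher's Lemma~1 to the coefficients $c(n,q,a)$ truncated to $(x-h,x]$, with $\delta=qQ$. Your Step~1 supplies the standard Fourier/Plancherel proof of that lemma, where the paper simply cites it, and your lower bound $\gg\delta$ for the multiplier on $|\beta|\le 1/\delta$ is correct.

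Your Step~2 concern is real, and the paper's proof does not address it. Right after invoking Gallagher it writes $\int_{x-h}^{x}$, and then it replaces the truncated windows by $\Delta(t,\frac{qQ}{2},\frac{a}{q})$. What either argument actually establishes is
\begin{equation*}
    \int_{\fM(q,a)}|\Delta(x,h,\alpha)|^2\diff\alpha\le\frac{\pi^2}{(qQ)^2}\int_{x-h}^{x+\frac{qQ}{2}}\bigg|\sum_{t-\frac{qQ}{2}<n\le t}c(n,q,a)\bigg|^2\diff t,
\end{equation*}
with $c(n,q,a)$ vanishing outside $(x-h,x]$.

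You should state this corrected form rather than assert that the displayed inequality follows ``in either reading''. Taken literally, with the untruncated $\Delta$ and $t\in[x-h,x]$, the lemma can fail when $h$ is small compared with $qQ$. For example, take $q=a=1$, $h=\frac12$, $Q\ge 2$, and $x$ a large prime preceded by a prime gap longer than $Q$. Then the left side equals $\frac{2}{Q}(\log x-1)^2$. On the other hand, every window $(t-\frac{Q}{2},t]$ with $x-\frac12\le t<x$ contains only composites, so the right side is at most $\frac{1}{2Q^2}(\frac{Q}{2}+1)^2\le\frac12$.
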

\begin{proof}
    For $1\leq a \leq q \leq Q$ with $(a,q)=1$, we define
    \begin{equation*}
        c(n,q,a):=\begin{cases}
            \log n \thinspace\e(na/q)-\frac{\mu(q)}{\varphi(q)} & \text{if } x-h<n\leq x \text{ is a prime,}\\
            -\frac{\mu(q)}{\varphi(q)} & \text{if } x-h<n\leq x \text{ is not a prime,}\\
            0 & \text{otherwise.}
    \end{cases}
    \end{equation*}
    By Gallagher's Lemma (see \cite[Lemma 1]{Gal70}), we have
    \begin{equation*}
    \begin{split}
        \int_{\fM(q,a)}|\Delta(x,h,\alpha)|^2\diff\alpha=&\int_{-\frac{1}{qQ}}^{\frac{1}{qQ}}\bigg|\sum_{x-h<n\leq x}c(n,q,a)\e(n\beta)\bigg|^2\diff\beta\\
        \ll&\frac{4}{(qQ)^2}\int_{x-h}^{x}\bigg|\sum_{t-qQ/2<n\leq t}c(n,q,a)\bigg|^2\diff t\\
        \ll& \frac{1}{(qQ)^2}\int_{x-h}^x|\Delta(t,\frac{qQ}{2},\frac{a}{q})|^2\diff t.
    \end{split}
    \end{equation*}

\end{proof}
The following two Lemmas give estimates for the average squared error terms $\Delta$ at rational points.
\begin{lemma}\label{lemma: almost all zero density}
    Assume Condition \ref{con: zero density}. Let $0<\varepsilon_1<\varepsilon_2< 1/3$ and let $q\leq \log^\ell x$ for any $\ell>0$. For any $h\in [x^{1-\frac{2}{A}}\exp(\log^{\frac{2}{3}+\varepsilon_2}x),x]$ we have
    \begin{equation*}
        \psum_{a\thinspace (q)}\int_X^{2X}|\Delta(x,h,a/q)|^2\diff x\ll h^2 X\exp(-\log^{\varepsilon_1}x).
    \end{equation*}
\end{lemma}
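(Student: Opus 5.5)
The plan is to reduce the statement to a mean-square estimate for the twisted prime-counting functions $\psi(x,h,\chi):=\sum_{x-h<n\leq x}\Lambda(n)\chi(n)$, and then to estimate that mean square zero by zero using Condition \ref{con: zero density}. This parallels the pointwise Lemma \ref{lemma: all zero density}; in practice it amounts to applying the mean-square analogues of \cite[Lemmas 3.3 \& 3.4]{Har26} to the appropriate corollary there.

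\textbf{Step 1: passage to characters.} First I replace $\log p$ over primes by $\Lambda(n)$. The prime powers and the primes dividing $q$ contribute $\cO(\sqrt{x}\log^2 x)$ to $\Delta(x,h,a/q)$, which is admissible because $h\geq x^{1-2/A}\exp(\ldots)$ and $A\geq 2$. For $(n,q)=1$ I expand $\e(an/q)=\frac{1}{\varphi(q)}\sum_{\chi (q)}\overline{\chi}(an)\tau(\chi)$. The principal character produces $\frac{\mu(q)}{\varphi(q)}\psi(x,h,\chi_0)$, so that
\begin{equation*}
\Delta(x,h,a/q)=\frac{1}{\varphi(q)}\sum_{\chi (q)}\overline{\chi}(a)\tau(\chi)\big(\psi(x,h,\overline{\chi})-\delta_\chi h\big)+\cO(\sqrt{x}\log^2 x),
\end{equation*}
where $\delta_\chi=1$ if $\chi$ is principal and $\delta_\chi=0$ otherwise. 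Next I sum $|\cdot|^2$ over reduced residues $a$ and use orthogonality of characters. Together with $|\tau(\chi)|^2\leq q$, this gives
\begin{equation*}
\psum_{a (q)}|\Delta(x,h,a/q)|^2\ll \frac{q}{\varphi(q)}\sum_{\chi (q)}|\psi(x,h,\chi)-\delta_\chi h|^2+x\log^4x.
\end{equation*}
Since $q\leq\log^\ell x$, the factor $q/\varphi(q)$, the number of characters, and the passage from each $\chi$ to the primitive character inducing it all cost only powers of $\log x$. These are absorbed into $\exp(-\log^{\varepsilon_1}x)$ after slightly shrinking $\varepsilon_1$. It therefore suffices to show, for each primitive $\chi$ of conductor at most $q$, that $\int_X^{2X}|\psi(x,h,\chi)-\delta_\chi h|^2\diff x\ll h^2X\exp(-\log^{\varepsilon_1'}X)$.

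\textbf{Step 2: explicit formula and mean value over zeros.} I use the truncated explicit formula with $T=X^{1+\eta}/h$:
\begin{equation*}
\psi(x,h,\chi)-\delta_\chi h=-\sum_{|\gamma|\leq T}\frac{x^\rho-(x-h)^\rho}{\rho}+\cO\big(X\log^2(qX)/T\big).
\end{equation*}
The coefficient of each zero satisfies $c_\rho\ll X^{\beta}\min(h/X,1/|\gamma|)$. Expanding the square and integrating over $x\in[X,2X]$, the cross terms decay like $\min(X,1/|\gamma-\gamma'|)$, in the style of the Saffari–Vaughan/Selberg argument. Since each unit window of ordinates contains only $\cO(\log qT)$ zeros, this gives
\begin{equation*}
\int_X^{2X}\Big|\sum_\rho\ldots\Big|^2\ll h^2X\log^2(qT)\sum_{|\gamma|\leq X/h}X^{2\beta-2}+X^3\log^2(qT)\sum_{X/h<|\gamma|\leq T}\frac{X^{2\beta-2}}{|\gamma|^2}.
\end{equation*}
The second sum is treated dyadically in $|\gamma|$ and is dominated by the first.

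\textbf{Step 3: inserting the density estimate (main obstacle).} I write $\sum_{|\gamma|\leq X/h}X^{2\beta-2}=\int X^{2\sigma-2}\diff\cN(\sigma,X/h,\chi)$ and integrate by parts. Condition \ref{con: zero density} then bounds the integrand by
\begin{equation*}
\big((q X/h)^{A}X^{-2}\big)^{1-\sigma}\log^B X\leq\big(q^A\exp(-A\log^{2/3+\varepsilon_2}X)\big)^{1-\sigma}\log^BX.
\end{equation*}
Here I used $h\geq X^{1-2/A}\exp(\log^{2/3+\varepsilon_2}X)$, which gives $(X/h)^A\leq X^2\exp(-A\log^{2/3+\varepsilon_2}X)$. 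This saving is only useful if $1-\sigma$ is not too small, so the hard step is the region near $\sigma=1$. There I invoke the Vinogradov–Korobov zero-free region for $L(s,\chi)$ with $q\leq\log^\ell X$, namely $1-\beta\gg \log^{-2/3}X(\log\log X)^{-1/3}$. A possible exceptional zero is excluded by Siegel's theorem, at the cost of ineffectivity, which is harmless here. With $1-\sigma\gg\log^{-2/3-o(1)}X$ the integrand is at most $\exp(-c\log^{\varepsilon_2-o(1)}X)$. This beats all the logarithmic losses and yields the saving $\exp(-\log^{\varepsilon_1}X)$ for any $\varepsilon_1<\varepsilon_2$.

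For $\sigma$ near $\frac12$ the same bound is trivially strong, since the factor $X^{2\sigma-2}\leq X^{-1}$ already dominates. Finally, the truncation error contributes $X\cdot(X\log^2/T)^2=X h^2X^{-2\eta}\log^4$, which is also acceptable. This completes the plan for the lemma.
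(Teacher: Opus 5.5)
Your route is the paper's route. The paper's proof is only a citation: it applies \cite[Lemmas 3.3 \& 3.5]{Har26}, which reduce $\Delta(x,h,a/q)$ to the sums $\psi(x,h,\chi)$ and handle the mean square over $a$, to \cite[Corollary 2.13]{Har26}, a short-interval mean-square bound for $\psi(x,h,\chi)$ derived from Condition \ref{con: zero density} and the Vinogradov--Korobov zero-free region. Your Steps 2--3 correctly unpack that corollary. The inequality $(X/h)^AX^{-2}\le\exp(-A\log^{2/3+\varepsilon_2}X)$, combined with $1-\beta\gg\log^{-2/3}X(\log\log X)^{-1/3}$, is exactly where the window for $h$ and the loss $\varepsilon_1<\varepsilon_2$ come from.

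Step 2 needs two small corrections. First, the cross terms are $\ll X\min(1,|\gamma-\gamma'|^{-1})$, not $\min(X,|\gamma-\gamma'|^{-1})$. Second, the coefficients $(x^\rho-(x-h)^\rho)/\rho$ depend on $x$, so direct integration by parts does not give your displayed bound for $|\gamma|>X/h$. The Saffari--Vaughan device, which you allude to, handles this: take $h=\delta x$ so the coefficients $((1+\delta)^\rho-1)/\rho$ are constant, then average over $\delta\asymp h/X$.

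The step that fails as written is in Step 1. You bound the prime-power contribution by $\cO(\sqrt{x}\log^2x)$ and call it admissible ``because $h\ge x^{1-2/A}\exp(\ldots)$ and $A\ge 2$''.
\begin{itemize}
\item After squaring, summing over $a$ and integrating, this leaves a term of size $\varphi(q)X^2\log^4X$.
\item That term is $\ll h^2X\exp(-\log^{\varepsilon_1}X)$ only when $h\ge X^{1/2+o(1)}$, which essentially means only when $A\ge 4$.
\item For $A=\frac73$ the lemma must hold for $h$ as small as $X^{1/7+o(1)}$, and for $A=2$ as small as $\exp(\log^{2/3+\varepsilon_2}X)$.
\item This small-$h$ range is exactly what Lemma \ref{lemma: major arcs density 1} uses, with $h=qQ/2$.
\end{itemize}

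The fix is to count prime powers in the short interval rather than up to $x$. The interval $(x-h,x]$ contains at most $x^{1/k}-(x-h)^{1/k}+1\ll hx^{1/k-1}+1$ perfect $k$-th powers. So prime powers with $k\ge 2$ contribute $\ll (hx^{-1/2}+\log x)\log x$ to $\Delta(x,h,a/q)$. Their mean square is $\ll\varphi(q)(h^2+X\log^2X)\log^2X$, which is admissible because $q\le\log^\ell X$ and $h\ge\exp(\log^{2/3+\varepsilon_2}X)$.

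Similarly, replacing $\chi$ by the primitive character inducing it changes $\psi(x,h,\chi)$ by an additive $\cO(\omega(q)\log x)$ per character, not a multiplicative power of $\log x$. This is harmless for the same reason. With this repair your argument proves the lemma.
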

\begin{proof}
    We apply \cite[Lemmas 3.3 \& 3.5]{Har26} to \cite[Corollary 2.13]{Har26}.
\end{proof}
\begin{lemma}\label{lemma: almost all GRH}
    Assume \nameref{con: GRH}, let $q\in\N$ and let $h\in[q\log^{2+\varepsilon} qx,x]$ for any $\varepsilon>0$. We have
    \begin{equation*}
        \psum_{a\thinspace (q)}\int_X^{2X}|\Delta(x,h,a/q)|^2\diff x\ll qhX\log^2 qx.
    \end{equation*}
\end{lemma}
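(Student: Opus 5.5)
The statement is the GRH analogue of Lemma \ref{lemma: almost all zero density}. As with the neighbouring lemmas, the plan is to reduce to a mean-square estimate for primes in short intervals in arithmetic progressions. Concretely, we would apply \cite[Lemmas 3.3 \& 3.5]{Har26} to the GRH mean-value corollary of \cite{Har26}, the analogue of \cite[Corollary 2.13]{Har26}. For completeness we sketch how that input is derived.

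\textbf{Step 1: expand in characters.} For $(a,q)=1$ we write
\begin{equation*}
\e(an/q)=\frac{1}{\varphi(q)}\sum_{\chi\,(q)}\tau(\bar\chi)\chi(an)
\end{equation*}
for $(n,q)=1$; primes dividing $q$ contribute only $O(\log^2 q)$. Using $\tau(\chi_0)=\mu(q)$ for the principal character, we obtain
\begin{equation*}
\Delta(x,h,a/q)=\frac{1}{\varphi(q)}\sum_{\chi}\tau(\bar\chi)\chi(a)\,E(x,h,\chi)+O(\log^2 q),
\end{equation*}
where $E(x,h,\chi)=\sum_{x-h<p\le x}\chi(p)\log p-\delta_\chi h$ and $\delta_\chi=1$ exactly when $\chi=\chi_0$.

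\textbf{Step 2: average over $a$.} Summing $|\cdot|^2$ over reduced residues $a$ and using orthogonality of characters, together with $|\tau(\chi)|^2\le q$, gives
\begin{equation*}
\psum_{a\,(q)}|\Delta(x,h,a/q)|^2\ll\frac{q}{\varphi(q)}\sum_{\chi\,(q)}|E(x,h,\chi)|^2+\varphi(q)\log^4 q.
\end{equation*}
This is presumably the content of \cite[Lemma 3.5]{Har26}. We then pass from prime sums to $\psi(x,\chi)-\psi(x-h,\chi)$. The prime-power and imprimitive corrections are $O(\sqrt{x}\log x)$, which is harmless here because the target bound per character is of size $h\log^2 qx$ and $h\ge q\log^{2+\varepsilon}qx$.

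\textbf{Step 3: Selberg-type mean value under GRH.} For each $\chi$ we need
\begin{equation*}
\int_X^{2X}|\psi(x,\chi)-\psi(x-h,\chi)-\delta_\chi h|^2\diff x\ll hX\log^2(qX),
\end{equation*}
uniformly for $h\le X$. The route is the truncated explicit formula
\begin{equation*}
\psi(x,\chi)-\delta_\chi x=-\sum_{|\gamma|\le T}\frac{x^\rho}{\rho}+\text{error},
\end{equation*}
so the difference becomes $\sum_\rho x^\rho(1-(1-h/x)^\rho)/\rho$. The kernel is bounded by $\min(h/X,\,1/|\gamma|)$. Integrating $|\cdot|^2$ over $[X,2X]$ with $\rho=\tfrac12+i\gamma$ (this is where GRH enters), we apply a Gallagher/Montgomery–Vaughan mean-value argument for Dirichlet polynomials to the ordinates. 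Splitting $|\gamma|\le X/h$ from $|\gamma|>X/h$ and using $N(T+1,\chi)-N(T,\chi)\ll\log q(|T|+2)$, we get $\ll X^2\cdot(h/X)\log^2(qX)=hX\log^2 qX$. This is Selberg's classical argument, extended to characters.

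\textbf{Assembly.} Summing over the $\varphi(q)$ characters and using $q/\varphi(q)\cdot\varphi(q)=q$ yields $qhX\log^2qx$. The additive term $\varphi(q)\log^4q\cdot X$ is absorbed because $h\ge q\log^{2+\varepsilon}$.

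\textbf{Main obstacle.} The delicate step is Step 3. The difficulty is controlling the truncation error and the zero counting uniformly in $q$, so that the constant comes out as exactly $\log^2 qx$ and not something larger. The lower bound $h\ge q\log^{2+\varepsilon}qx$ is needed to dominate the accumulated lower-order error terms. Since the paper cites \cite{Har26} for all such inputs, the intended proof is simply the one-line citation described at the start.
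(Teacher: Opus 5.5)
Your route is the paper's. The lemma is proved there in one line by applying \cite[Lemmas 3.3 \& 3.5]{Har26} to \cite[Corollary 2.10]{Har26}, and that corollary is exactly the GRH Selberg-type mean-square bound you describe in Step 3. So your opening citation is the whole proof.

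One step of your expository sketch is wrong as written, though easily fixed. In Step 2 you say the prime-power and imprimitive corrections of size $O(\sqrt{x}\log x)$ are harmless. An error bounded only by $\sqrt{x}\log x$ pointwise can contribute up to about $X^2\log^2 X$ to $\int_X^{2X}|E(x,h,\chi)|^2\diff x$. That exceeds the target $hX\log^2 qX$ whenever $h<X$. Your comparison also mismatches scales: a pointwise correction should be compared with $\sqrt{h}\log qx$, not with $h\log^2 qx$.

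The fix is to use short-interval bounds instead of the global ones.
\begin{itemize}
\item Prime powers $p^k$ with $k\geq 2$ lying in $(x-h,x]$ contribute $\ll (h/\sqrt{x}+\log x)\log x$.
\item Passing from $\chi$ to its inducing primitive character only adds powers of primes dividing $q$ in $(x-h,x]$, contributing $\ll \log q\log x$.
\end{itemize}
The squares of both are $\ll h\log^2 qx$, because $h\leq x$ and $h\geq \log^{2+\varepsilon} qx$. With this correction the rest of your sketch goes through.
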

\begin{proof}
    We apply \cite[Lemmas 3.3 \& 3.5]{Har26} to \cite[Corollary 2.10]{Har26}.
\end{proof}
We may obtain an upper bound for the generating function on the minor arcs via Weyl's inequality.
\begin{lemma}\label{lemma: Weyls inequality}
    Let $1\leq a\leq q \leq Q$ and let $\alpha\in\fM(q,a)$. We have
    \begin{equation*}
        S(x,x,\alpha)\ll \bigg(\sqrt{qx}+\frac{x}{\sqrt{q}}+x^\frac{4}{5}\bigg)\log^3 x.
    \end{equation*}
\end{lemma}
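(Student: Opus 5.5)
This is the classical Vinogradov estimate for exponential sums over primes, in the form found in Davenport's \emph{Multiplicative Number Theory} (Chapter 25) or Vaughan's book. The plan is to pass from $\alpha$ to a bound depending only on $q$ and $x$, through Vaughan's identity and Dirichlet's approximation.

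First we remove the logarithmic weight restricted to primes. We compare $S(x,x,\alpha)$ with $\sum_{n\le x}\Lambda(n)\e(n\alpha)$. The prime powers $p^k$ with $k\ge2$ contribute $\cO(\sqrt{x}\log x)$, which is absorbed in the stated bound. It therefore suffices to bound $\sum_{n\le x}\Lambda(n)\e(n\alpha)$, given that $|\alpha-a/q|\le 1/(qQ)\le 1/q^2$. The last inequality holds because $q\le Q$, so the hypothesis of the standard lemma is met, with $(a,q)=1$ as in the definition of the arcs.

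Next we apply Vaughan's identity with parameters $u=v=x^{2/5}$. This splits the sum into a trivial part $\cO(u)$, Type I sums $\sum_{m\le uv}\big|\sum_{k\le x/m}\e(mk\alpha)\big|\log x$, and Type II bilinear sums $\sum_{u<m\le x/v}\sum_{v<k\le x/m}a_m b_k\e(mk\alpha)$.
\begin{itemize}
\item For the Type I sums, we bound the inner geometric sum by $\min(x/m,\|m\alpha\|^{-1})$. The classical lemma on $\sum_{m\le M}\min(x/m,\|m\alpha\|^{-1})$, for $|\alpha-a/q|\le q^{-2}$, then gives $\ll (x/q+M+q)\log(qx)$.
\item For the Type II sums, we split $m$ dyadically and apply Cauchy--Schwarz in $m$. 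After expanding the square, the same lemma yields $\ll (x/\sqrt{q}+x/\sqrt{u}+x/\sqrt{v}+\sqrt{qx})\log^{3}x$.
\end{itemize}
With $u=v=x^{2/5}$ the terms $uv$ and $x/\sqrt{u}$ become $x^{4/5}$. Collecting everything gives $(x/\sqrt{q}+\sqrt{qx}+x^{4/5})\log^{4}x$.

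The main obstacle is the exponent of the logarithm. The crude bookkeeping above loses one logarithm too many, giving $\log^4x$ instead of the stated $\log^3x$. To fix this, we handle the dyadic decomposition in the Type II sum with the divisor-function bound for $\sum_m|a_m|^2$ carefully, as in Davenport's Theorem 25.2, which yields exactly $\log^4 x$ for $\Lambda$. We then recover $\log^3x$ by partial summation from $\Lambda$, or simply cite that precise statement. If this refinement is cumbersome, we note that the exponent is immaterial for the later minor-arc application, since the surplus is absorbed by choosing $c_2$ large. In that case we simply cite the standard result (e.g.\ Davenport, Ch.~25) for the displayed inequality.
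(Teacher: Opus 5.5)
Your route is the paper's in substance. The paper gives no argument and simply cites the classical Vinogradov bound in the form of Iwaniec--Kowalski, Theorem 13.6. The preliminary steps of your sketch are fine: discarding prime powers costs $\cO(\sqrt{x}\log x)$, the hypothesis $|\alpha-a/q|\le 1/(qQ)\le q^{-2}$ with $(a,q)=1$ holds on $\fM(q,a)$, and Vaughan's identity with $u=v=x^{2/5}$ produces the three terms $x/\sqrt{q}$, $\sqrt{qx}$ and $x^{4/5}$.

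The gap is the power of the logarithm. Your bookkeeping, and Davenport's Chapter 25 which you propose to cite, give $\log^4 x$. The suggested repair cannot work. $S(x,x,\alpha)$ and $\sum_{n\le x}\Lambda(n)\e(n\alpha)$ put the same weight $\log p$ on each prime and differ only by the prime-power contribution $\cO(\sqrt{x}\log x)$. So a bound for one transfers to the other with exactly the same power of $\log x$. Partial summation moves a logarithm only when one inserts or removes the weight $\log p$ itself (e.g.\ passing to $\sum_{p\le x}\e(p\alpha)$), which is not the situation here. As written, then, you have proved the lemma with $\log^4 x$, and the fallback citation of Davenport does not give the displayed $\log^3 x$ either.

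To get the statement as written, you must either cite a source that actually proves the $\log^3 x$ version, as the paper does with Iwaniec--Kowalski, or sharpen the Type II estimate. The dyadic decomposition and the divisor-function mean square together must not cost a full extra logarithm.

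Your remark that the exponent is immaterial downstream is right. In Lemma \ref{lemma: minor arcs density} a $\log^4$ bound turns $\log^{4-c_1/2}N$ into $\log^{5-c_1/2}N$, so one needs $c_1\ge 10+2\varepsilon_2$ rather than $8+2\varepsilon_2$. This costs nothing in Theorem \ref{thm: main zero density}, where $U$ exceeds every fixed power of $\log N$. But that modifies the lemma and the later constants; it does not prove the lemma as stated.
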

\begin{proof}
    See e.g. \cite[Theorem 13.6]{IK04}.
\end{proof}

The following Lemma is a version of Parseval's theorem.
\begin{lemma}\label{lemma: Parseval}
    Let $\mathfrak{I}\subset \R/\Z$ and let $S(x,h,1)\ll h$. We have
    \begin{equation*}
        \int_\mathfrak{I} |S(x,h,\alpha)|^2 \diff \alpha\ll h\log x.
    \end{equation*}
    If we assume Condition \ref{con: zero density}, we may take $h\in[x^{1-\frac{1}{A}}\exp(\log^{\frac{2}{3}+\varepsilon}x),x]$ for any $\varepsilon>0$. If we assume \nameref{con: GRH}, we may take $h\in[\sqrt{x}\log^{2}x,x]$ instead.
\end{lemma}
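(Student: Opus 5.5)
We will use Parseval's identity over the full unit interval, since that immediately controls the integral over any $\mathfrak{I}\subset\R/\Z$. Expanding the square and integrating term by term via Lemma \ref{lemma: orth relation} gives
\begin{equation*}
    \int_{\mathfrak{I}}|S(x,h,\alpha)|^2\diff\alpha\le\int_{\R/\Z}|S(x,h,\alpha)|^2\diff\alpha=\sum_{x-h<p\le x}\log^2 p\le \log x\sum_{x-h<p\le x}\log p=\log x\cdot S(x,h,1).
\end{equation*}
Since $S(x,h,1)=\theta(x)-\theta(x-h)$, the hypothesis $S(x,h,1)\ll h$ yields the bound $h\log x$ at once. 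So the first assertion is essentially trivial, and the real content is checking when the hypothesis holds.

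\textbf{Under Condition \ref{con: zero density}.} Here we must show $\theta(x)-\theta(x-h)\ll h$ for $h\in[x^{1-\frac{1}{A}}\exp(\log^{\frac{2}{3}+\varepsilon}x),x]$. We take $q=1$, $a=1$ in Lemma \ref{lemma: all zero density}, with $\varepsilon_2=\varepsilon$ (shrinking $\varepsilon$ below $1/3$ if necessary, which only enlarges the admissible range of $h$) and any $0<\varepsilon_1<\varepsilon_2$. Since $\mu(1)/\varphi(1)=1$ and $v(x,h,0)=\lfloor x\rfloor-\lfloor x-h\rfloor\le h+1$, we get
\begin{equation*}
    S(x,h,1)=v(x,h,0)+\Delta(x,h,1)\le h+1+\cO\big(h\exp(-\log^{\varepsilon_1}x)\big)\ll h.
\end{equation*}

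\textbf{Under \nameref{con: GRH}.} Lemma \ref{lemma: all GRH} with $q=1$ needs $h\ge\sqrt{x}\log^{2+\varepsilon}x$, but the statement only asks for $h\ge\sqrt x\log^2 x$. In the range covered by Lemma \ref{lemma: all GRH} it gives $S(x,h,1)=h+\cO(\sqrt x\log^2x)\ll h$ directly. For $\sqrt x\log^2x\le h<\sqrt x\log^{2+\varepsilon}x$ we would instead use the classical GRH (RH) bound $\psi(x)-x\ll\sqrt x\log^2x$, applied at $x$ and at $x-h$, together with $\psi-\theta\ll\sqrt x$. This gives $\theta(x)-\theta(x-h)=h+\cO(\sqrt x\log^2 x)\ll h$ throughout $h\ge\sqrt x\log^2x$. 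If that classical bound is not regarded as available, the fallback is to read the range in the statement as $\sqrt x\log^{2+\varepsilon}x$, which is all that is needed downstream.

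\textbf{Main obstacle.} The only delicate point is this matching of the exact lower endpoint of $h$ in the GRH case with the hypotheses of Lemma \ref{lemma: all GRH}. The zero-density case is a direct substitution into Lemma \ref{lemma: all zero density}, and the rest of the argument is routine.
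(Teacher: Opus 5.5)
Your proposal is correct and follows the paper's proof: Parseval over the full unit interval bounds the integral by $\log x\cdot S(x,h,1)$, and then Lemmas \ref{lemma: all zero density} and \ref{lemma: all GRH} with $q=1$ are used to verify the hypothesis $S(x,h,1)\ll h$. Your remark on the GRH endpoint is more careful than the paper, whose appeal to Lemma \ref{lemma: all GRH} literally covers only $h\ge\sqrt{x}\log^{2+\varepsilon}x$; the von Koch bound $\psi(x)-x\ll\sqrt{x}\log^2x$ (from RH, which is contained in GRH) correctly extends the range down to $h\ge\sqrt{x}\log^2x$.
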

\begin{proof} 
    By non-negativity and by exchanging the order of integration and summation we have
    \begin{equation*}
    \begin{split}
        \int_\mathfrak{I} |S(x,h,\alpha)|^2 \diff \alpha \leq & \int_0^1 |S(x,h,\alpha)|^2 \diff \alpha\\
        = & \int_0^1 \sum_{x-h<p_1,p_2\leq x}\log p_1 \log p_2 \e((p_1-p_2)\alpha) \diff \alpha\\
        = & \sum_{x-h<p_1,p_2\leq x}\log p_1 \log p_2 \int_0^1\e((p_1-p_2)\alpha) \diff \alpha.
    \end{split}
    \end{equation*}
    The integral on the right-hand side is $1$ if $p_1=p_2$, and is $0$ otherwise. Therefore we have
    \begin{equation*}
    \begin{split}
        \int_\mathfrak{I} |S(x,h,\alpha)|^2 \diff \alpha \leq & \sum_{x-h<p\leq x}\log^2 p\leq S(x,h,1)\log x\ll h\log x.
    \end{split}
    \end{equation*}
    The ranges for $h$ under Condition \ref{con: zero density} \& \ref{con: GRH} follow immediately from Lemmas \ref{lemma: all zero density} \& \ref{lemma: all GRH}, respectively.
\end{proof}

Furthermore we give an upper bound for the geometric series $v(x,h,\beta)$.

\begin{lemma}\label{lemma: geometric bound}
    For $\beta\in \R$, we have
    \begin{equation*}
        |v(x,h,\beta)| \ll \min (h,|\beta|^{-1}).
    \end{equation*}
\end{lemma}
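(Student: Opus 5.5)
The bound $|v(x,h,\beta)|\le h$ is the trivial estimate. The sum $v(x,h,\beta)=\sum_{x-h<n\le x}\e(n\beta)$ has at most $\lceil h\rceil\le h+1\ll h$ terms, each of modulus one. Here we tacitly assume $h\ge 1$, which holds in every application in the paper. If $0<h<1$, the sum has at most one term, so $|v|\le 1$; in that regime the claim should be read with $h$ replaced by $\max(h,1)$.

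For the second bound, write $v(x,h,\beta)$ as a finite geometric progression. Let $m_0=\floor{x-h}+1$ and $M=\floor{x}-\floor{x-h}$ be the first index and the number of terms. If $\beta\notin\Z$, then
\begin{equation*}
    v(x,h,\beta)=\e(m_0\beta)\,\frac{\e(M\beta)-1}{\e(\beta)-1},
\end{equation*}
and therefore
\begin{equation*}
    |v(x,h,\beta)|\le \frac{2}{|\e(\beta)-1|}=\frac{1}{|\sin \pi\beta|}.
\end{equation*}
We then use $|\sin\pi\beta|\ge 2\|\beta\|$, where $\|\beta\|$ denotes the distance from $\beta$ to the nearest integer; this is the concavity of $\sin$ on $[0,\pi/2]$. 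It gives $|v|\le (2\|\beta\|)^{-1}$.

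The only point needing care is converting $\|\beta\|$ into $|\beta|$. This works when $|\beta|\le\frac12$, since then $\|\beta\|=|\beta|$. That condition always holds where the lemma is applied: on the major arcs $\beta=\alpha-a/q$ satisfies $|\beta|\le \frac{1}{qQ}<\frac12$. For $|\beta|>\frac12$ we have $|\beta|^{-1}<2$, while $v$ is $1$-periodic in $\beta$. The literal inequality can therefore fail for $\beta$ near a nonzero integer, so the statement should be understood for $|\beta|\le\frac12$, or with $|\beta|$ replaced by $\|\beta\|$. I would state this restriction explicitly.

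Combining the two bounds gives $|v(x,h,\beta)|\ll\min(h,|\beta|^{-1})$. When $\beta=0$ we take $|\beta|^{-1}=\infty$, so only the trivial bound applies. No step is a genuine obstacle; the only subtlety is the range of $\beta$ just discussed.
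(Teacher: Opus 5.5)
Your proof is correct and takes the same route as the paper, which combines the trivial bound $h$ with the geometric-series bound $|1-\e(\beta)|^{-1}$ (up to a constant factor) to get $\min(h,|\beta|^{-1})$. Your two caveats are genuinely needed for the literal statement and are left implicit in the paper's one-line argument: $h\geq 1$, and $|\beta|\leq\frac12$ (or $\|\beta\|$ in place of $|\beta|$). Both hold wherever the lemma is applied, since there $|\beta|\leq\frac12$.
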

\begin{proof}
    Since $v(x,h,\beta)$ is a geometric series, we have
    \begin{equation*}
        |v(x,h,\beta)| \leq \min (h,|1-\e(\beta)|^{-1})\ll \min (x,|\beta|^{-1}).
    \end{equation*}
\end{proof}
\begin{lemma}\label{lemma: totient lower bound}
    Let $q\in\N$. For any $\varepsilon>0$ we have
    \begin{equation*}
        q^{1-\varepsilon}\ll \varphi(q) \leq q.
    \end{equation*}
\end{lemma}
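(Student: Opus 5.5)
The upper bound $\varphi(q)\le q$ is immediate from the definition: $\varphi(q)$ counts the residues $1\le a\le q$ with $(a,q)=1$, so it is at most the total number $q$ of residues.

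For the lower bound I will use the product formula
\begin{equation*}
    \frac{\varphi(q)}{q}=\prod_{p\mid q}\bigg(1-\frac{1}{p}\bigg),
\end{equation*}
and show that $q/\varphi(q)\ll_\varepsilon q^{\varepsilon}$. Write
\begin{equation*}
    \frac{q}{\varphi(q)}=\prod_{p\mid q}\frac{p}{p-1}.
\end{equation*}

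Split the primes dividing $q$ into two classes.
\begin{itemize}
    \item For $p\mid q$ with $p\ge 2^{1/\varepsilon}$, we have $p^{\varepsilon}\ge 2\ge \frac{p}{p-1}$. Hence
    \begin{equation*}
        \prod_{\substack{p\mid q\\ p\ge 2^{1/\varepsilon}}}\frac{p}{p-1}\le \prod_{p\mid q}p^{\varepsilon}\le q^{\varepsilon}.
    \end{equation*}
    \item For the primes $p<2^{1/\varepsilon}$, each factor satisfies $\frac{p}{p-1}\le 2$, and there are at most $2^{1/\varepsilon}$ such primes. Their contribution is therefore at most $2^{2^{1/\varepsilon}}$, a constant depending only on $\varepsilon$.
\end{itemize}

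Combining the two classes gives $q/\varphi(q)\le C_\varepsilon q^{\varepsilon}$, that is, $\varphi(q)\ge C_\varepsilon^{-1}q^{1-\varepsilon}$, which is the claimed bound with implied constant depending on $\varepsilon$.

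An alternative route is the standard estimate $q/\varphi(q)\ll\log\log q$, which follows from Mertens' theorem applied to the at most $\omega(q)\ll\log q$ smallest primes. Since $\log\log q\ll_\varepsilon q^{\varepsilon}$, this gives the same conclusion; the elementary splitting argument above avoids needing Mertens.

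There is no real obstacle. The only point requiring care is to record that the implied constant depends on $\varepsilon$, which is harmless wherever the lemma is used.
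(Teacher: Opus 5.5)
Your proof is correct and is essentially the argument the paper relies on. The paper only cites Hardy--Wright, Theorem 327, whose standard proof is the same multiplicative splitting of the prime factors of $q$ at $2^{1/\varepsilon}$, giving $\varphi(q)\ge C_\varepsilon^{-1}q^{1-\varepsilon}$ with an $\varepsilon$-dependent constant.
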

\begin{proof}
    See e.g. \cite[Theorem 327]{HW38}.
\end{proof}
\subsection{Singular series}

In this subsection we are concerned with the truncated Ramanujan sum
\begin{equation}
    \mathfrak{S}(N,P):=\sum_{q\leq P}\frac{\mu(q)}{\varphi(q)^3}\psum_{a(q)} \eq(-Na).
\end{equation}
The first Lemma extends the second argument of $\mathfrak{S}$ to infinity, thereby incurring a small error term.
\begin{lemma}\label{lemma: Ramanujan sum error term}
    For any $\varepsilon>0$ we have
    \begin{equation*}
        \mathfrak{S}(N,P)=\mathfrak{S}(N,\infty)+\mathcal{O}(P^{-1+\varepsilon}).
    \end{equation*}
\end{lemma}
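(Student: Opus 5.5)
The plan is to bound the tail $\sum_{q>P}$ of the series defining $\mathfrak{S}(N,\infty)$ directly, using only the trivial bound on the inner sum. Write
\begin{equation*}
    \mathfrak{S}(N,\infty)-\mathfrak{S}(N,P)=\sum_{q>P}\frac{\mu(q)}{\varphi(q)^3}\psum_{a(q)} \eq(-Na).
\end{equation*}
The sum over $a$ is the Ramanujan sum $c_q(-N)$. It runs over $\varphi(q)$ reduced residues, each term of modulus $1$, so trivially
\begin{equation*}
    \Big|\psum_{a(q)} \eq(-Na)\Big|\leq \varphi(q).
\end{equation*}
Hence the $q$-th term is at most $\varphi(q)^{-2}$ in absolute value.

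By Lemma \ref{lemma: totient lower bound}, applied with $\varepsilon/2$ in place of $\varepsilon$, we have $\varphi(q)^{-2}\ll q^{-2+\varepsilon}$. The tail is therefore bounded by
\begin{equation*}
    \sum_{q>P} q^{-2+\varepsilon}\ll \int_{P-1}^\infty t^{-2+\varepsilon}\diff t \ll P^{-1+\varepsilon},
\end{equation*}
for $0<\varepsilon<1$ and $P\geq 2$. For $P<2$ the claimed bound is trivial, since both $\mathfrak{S}(N,P)$ and $\mathfrak{S}(N,\infty)$ are $\cO(1)$. This $\cO(1)$ bound follows from the same absolute convergence, which also shows that $\mathfrak{S}(N,\infty)$ is well defined.

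There is no real obstacle here. The only points requiring care are the following.
\begin{itemize}
    \item Absolute convergence must be established first, so that splitting off the tail is legitimate. The bound above gives this.
    \item The $\varepsilon$ from Lemma \ref{lemma: totient lower bound} must be adjusted so that the final exponent is $-1+\varepsilon$. This is why the lemma is applied with $\varepsilon/2$.
\end{itemize}
One could sharpen the estimate using $|c_q(N)|\leq (q,N)$ or multiplicativity, but this is unnecessary for the stated bound.
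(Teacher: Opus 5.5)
Your proposal is correct and follows the paper's proof essentially verbatim: the trivial bound $\bigl|\sum^*_{a(q)} \mathbf{e}_q(-Na)\bigr|\le\varphi(q)$ reduces the tail to $\sum_{q>P}\varphi(q)^{-2}$, and the lower bound $\varphi(q)\gg q^{1-\varepsilon}$ followed by comparison with $\int_P^\infty t^{-2+\varepsilon}\,dt$ gives $P^{-1+\varepsilon}$. Your extra care about absolute convergence and about replacing $\varepsilon$ by $\varepsilon/2$ just spells out details the paper leaves implicit.
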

\begin{proof}
By Lemma \ref{lemma: totient lower bound}, we have
\begin{equation*}
\begin{split}
    \mathfrak{S}(N,P)=&\sum_{q=1}^\infty\frac{\mu(q)}{\varphi(q)^3}\psum_{a(q)} \eq(-Na)+\mathcal{O}\bigg(\sum_{q>P}\frac{1}{\varphi(q)^2}\bigg)\\
    =&\mathfrak{S}(N,\infty)+\mathcal{O}\bigg(\int_{P}^\infty t^{-2+\varepsilon}\diff t\bigg)\\
    =&\mathfrak{S}(N,\infty)+\mathcal{O}(P^{-1+\varepsilon}).
\end{split}
\end{equation*}
\end{proof}

\begin{lemma}\label{lemma: Ramanujan sum multiplicative}
    For $N\in \N$, we have
    \begin{equation*}
        \mathfrak{S}(N,\infty)=\mathfrak{S}(N).
    \end{equation*}
\end{lemma}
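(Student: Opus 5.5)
The plan is the standard Euler product argument: the summand is multiplicative in $q$, so the absolutely convergent series factors over primes, and the local factors are computed directly.

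\textbf{Setting up the multiplicative summand.} Write the inner sum as the Ramanujan sum
\begin{equation*}
    c_q(N):=\psum_{a(q)}\eq(-Na)=\psum_{a(q)}\eq(Na),
\end{equation*}
where the second equality holds because $a\mapsto -a$ permutes the reduced residues modulo $q$. Put $f(q):=\mu(q)c_q(N)/\varphi(q)^3$.
\begin{itemize}
    \item $\mu$ and $\varphi$ are multiplicative.
    \item $c_q(N)$ is multiplicative in $q$ by the Chinese remainder theorem. For coprime $q_1,q_2$, the residues $a_1q_2+a_2q_1$, with $a_i$ running over reduced residues modulo $q_i$, run over the reduced residues modulo $q_1q_2$, and $\e\big(N(a_1q_2+a_2q_1)/(q_1q_2)\big)=\e_{q_1}(Na_1)\,\e_{q_2}(Na_2)$.
\end{itemize}
Hence $f$ is multiplicative. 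Since $|c_q(N)|\le\varphi(q)$, we have $|f(q)|\le\varphi(q)^{-2}\ll q^{-2+\varepsilon}$ by Lemma \ref{lemma: totient lower bound}. So the series converges absolutely and
\begin{equation*}
    \mathfrak{S}(N,\infty)=\prod_p\Big(1+\sum_{k\ge1}f(p^k)\Big)=\prod_p\big(1+f(p)\big),
\end{equation*}
because $\mu(p^k)=0$ for $k\ge2$. Convergence of the product also follows from $\sum_p|f(p)|<\infty$.

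\textbf{Computing the local factors.} Evaluate $c_p(N)$ from the complete sum $\sum_{a=0}^{p-1}\e_p(Na)$:
\begin{itemize}
    \item If $p\mid N$, every term equals $1$, so $c_p(N)=p-1$.
    \item If $p\nmid N$, the complete sum vanishes, so removing the term $a=0$ gives $c_p(N)=-1$.
\end{itemize}
Substituting, $f(p)=-(p-1)/(p-1)^3=-1/(p-1)^2$ for $p\mid N$, and $f(p)=+1/(p-1)^3$ for $p\nmid N$. These are exactly the factors in the definition of $\mathfrak{S}(N)$.

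There is no real obstacle. The only points needing care are the multiplicativity of the Ramanujan sum and the justification for rearranging the series into an Euler product, and the latter is covered by the absolute convergence above.
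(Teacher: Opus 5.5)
Your proof is correct and follows the paper's argument. Both use multiplicativity of the summand to factor the series into an Euler product, in which only the $k=1$ terms survive because of $\mu$, and then evaluate the Ramanujan sum at primes as $p-1$ or $-1$. Your version additionally proves the multiplicativity of $c_q(N)$ via the Chinese remainder theorem and justifies the rearrangement by absolute convergence; the paper asserts both without proof.
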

\begin{proof}
    Note that
    \begin{equation*}
        f(q):=\psum_{a(q)}\eq(-Na)
    \end{equation*}
    is multiplicative, i.e. $f(qr)=f(q)f(r)$ whenever $(q,r)=1$. Note that $\mu$ and $\varphi$ are also multiplicative. We have
    \begin{equation*}
    \begin{split}
        \mathfrak{S}(N,\infty)=& \prod_p\bigg(1+ \frac{\mu(p)f(p)}{\varphi^3(p)} + \frac{\mu(p^2)f(p^2)}{\varphi^3(p^2)}+\dots \bigg)\\
        =&\prod_p\bigg(1- \frac{f(p)}{\varphi^3(p)}\bigg).
    \end{split}
    \end{equation*}
    Furthermore we note that $\varphi(p)=p-1$ and
    \begin{equation*}
        f(p)=\sum_{a=1}^{p-1} \ep(-Na)=\begin{cases}
            p-1 & \text{if } p\mid N\\
            -1 & \text{if } p\nmid N,
        \end{cases}
    \end{equation*}
    thus concluding the proof.
\end{proof}

\section{Proof of Theorem \ref{thm: main zero density}}
In this section we prove Theorem \ref{thm: main zero density}. We assume Condition \ref{con: zero density} throughout the section.
\subsection{The minor arcs under zero-density estimates}
In this section we study the integral $R_{\fm}$ on the minor arcs $\fm$ under the assumption of Condition \ref{con: zero density}. For zero-density estimates it suffices to use Weyl's inequality (Lemma \ref{lemma: Weyls inequality}) to obtain a sufficiently strong result.

\begin{lemma}\label{lemma: minor arcs density}
    Assume Condition \ref{con: zero density}. Let $y\in[N^{1-\frac{1}{A}}\exp(\log^{\frac{2}{3}+\varepsilon_1}N),N/2]$ for any $\varepsilon_1>0$. Let $8< 8+2\varepsilon_2 \leq c_1<c_2$ and let $U\in [\log^{k}N,N]$ for any $k>c_1+c_2$. We have
    \begin{equation*}
        R_\fm\ll\frac{Uy}{\log^{\varepsilon_2}N}.
    \end{equation*}
\end{lemma}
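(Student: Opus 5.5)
Our plan is to bound $R_\fm$ by taking the supremum of $|S_3|$ over $\fm$ and applying Cauchy--Schwarz together with Parseval to the remaining two factors:
\begin{equation*}
    |R_\fm|\leq \sup_{\alpha\in\fm}|S_3(\alpha)|\bigg(\int_\fU|S_1(\alpha)|^2\diff\alpha\bigg)^{\frac12}\bigg(\int_\fU|S_2(\alpha)|^2\diff\alpha\bigg)^{\frac12}.
\end{equation*}
For $S_2=S(2y,y,\cdot)$ and $S_1=S(N-y,y,\cdot)$ we use Lemma \ref{lemma: Parseval}. Its hypothesis $S(x,h,1)\ll h$ holds under Condition \ref{con: zero density}, since $h=y\geq N^{1-\frac1A}\exp(\log^{\frac23+\varepsilon_1}N)$ lies in the admissible range for $x=N-y$ (and trivially for $x=2y$). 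In any case, a Brun--Titchmarsh type bound $\sum_{x-h<p\le x}\log p\ll h$ would also suffice. This gives each mean square $\ll y\log N$, so the product of the two square roots is $\ll y\log N$.

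The key step is the sup bound for $S_3(\alpha)=S(U,U,\alpha)$ on $\fm$. Any $\alpha\in\fm\subseteq\fm'$ lies in some $\fM(q,a)$ with $P<q\leq Q$. By Lemma \ref{lemma: Weyls inequality} with $x=U$,
\begin{equation*}
    S_3(\alpha)\ll\Big(\sqrt{qU}+\frac{U}{\sqrt q}+U^{\frac45}\Big)\log^3U\ll\Big(\sqrt{QU}+\frac{U}{\sqrt P}+U^{\frac45}\Big)\log^3 N.
\end{equation*}
Inserting $Q=U/\log^{c_2}N$ and $P=\log^{c_1}N$, this becomes $U(\log^{-c_2/2}N+\log^{-c_1/2}N)\log^3N+U^{4/5}\log^3N$. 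Since $c_1<c_2$, the dominant term is $U\log^{3-c_1/2}N$, provided $U^{-1/5}\le \log^{-c_1/2}N$. The latter holds because $U\geq\log^kN$ with $k>c_1+c_2>2c_1\ge \tfrac52 c_1$.

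Combining the two estimates, $R_\fm\ll Uy\log^{4-c_1/2}N$, and $c_1\geq 8+2\varepsilon_2$ gives $4-c_1/2\leq-\varepsilon_2$, which is the claim. The main obstacle is bookkeeping rather than substance. First, we must make sure the Parseval input $S(x,h,1)\ll h$ is legitimately available for the short interval defining $S_1$, which is exactly where the lower bound on $y$ is used. Second, we must check that the $U^{4/5}$ term and the $Q$ term are dominated, which is what the condition $k>c_1+c_2$ is designed for. A small technical point is that Lemma \ref{lemma: Weyls inequality} requires $1\le a\le q\le Q$ with $\alpha\in\fM(q,a)$; this is guaranteed by the Dirichlet covering $\fm'\supseteq\fm$.
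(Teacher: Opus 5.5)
Your route is the same as the paper's. You pull out $\sup_{\fm}|S_3|$, apply Cauchy--Schwarz and Parseval to $S_1,S_2$, and bound $S_3$ on $\fm\subseteq\fm'$ by Weyl's inequality with $P<q\le Q$. Your remark that Brun--Titchmarsh also gives $S(N-y,y,1)\ll y$ is right, since $\log y\asymp\log N$. Those steps are fine.

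The gap is the $U^{4/5}$ term. You assert $U^{-1/5}\le\log^{-c_1/2}N$ because $k>c_1+c_2>2c_1\ge\frac52c_1$. But $2c_1<\frac52c_1$ for $c_1>0$, and $k>c_1+c_2$ forces $k\ge\frac52c_1$ only if $c_2\ge\frac32c_1$. Worse, you have already replaced $\log^3U$ by $\log^3N$, so your estimate genuinely fails inside the admissible range. Take $\varepsilon_2=\frac12$, $c_1=9$, $c_2=10$, $k=19.5$ and $U=\log^{19.5}N$. Your bound for this contribution is then $y\log N\cdot U^{4/5}\log^3N=Uy\log^{0.1}N$, which is not $\ll Uy\log^{-1/2}N$.

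The repair is to keep the factor $\log^3U$ on this term. The paper's display retains $\log^3U$ here, though it does not spell out why the term is dominated. Since $u\mapsto u^{-1/5}\log^3u$ is decreasing for $u\ge e^{15}$, for $U\in[\log^kN,N]$ and $N$ large we get
\begin{equation*}
y\log N\cdot U^{4/5}\log^3U\ll Uy\,\log^{1-k/5}N\,(k\log\log N)^3 .
\end{equation*}
Now $k>c_1+c_2>2c_1\ge16+4\varepsilon_2$ gives $\frac k5-1>\frac{11+4\varepsilon_2}{5}\ge\varepsilon_2$ as long as $\varepsilon_2\le 11$. The strict inequality absorbs the factor $(\log\log N)^3$. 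Your treatment of the other two Weyl terms, giving $Uy\log^{4-c_1/2}N\le Uy\log^{-\varepsilon_2}N$, then completes the proof.

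For $\varepsilon_2>11$ the stated hypotheses do not suffice for this Weyl-based argument, the paper's included; one would add something like $k>5+5\varepsilon_2$. That case is never needed, since the lemma is applied with $\varepsilon_2$ small.
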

\begin{proof}
    By the Hölder inequality and Lemmas \ref{lemma: Weyls inequality} \& \ref{lemma: Parseval} we have
    \begin{equation*}\label{eq: minor arcs 1}
    \begin{split}
        R_{\fm}\ll &\sup_{\alpha\in\fm}|S_3(\alpha)|\bigg(\int_\fm |S_1(\alpha)|^2\diff \alpha \int_\fm | S_2(\alpha)|^2\diff \alpha\bigg)^{1/2}\\
        \ll& y\log N \sup_{\alpha\in\fm'}|S_3(\alpha)|\\
        \ll& Uy  (\log^{-\frac{c_2}{2}}N+\log^{-\frac{c_1}{2}}N+U^{-\frac{1}{5}})\log N\log^3 U\\
        \ll& Uy \log^{4-\frac{c_1}{2}}N
        \ll\frac{Uy}{\log^{\varepsilon_2}N}.
    \end{split}
    \end{equation*}
\end{proof}
Lemma \ref{lemma: minor arcs density} implies that the minor arc contribution under zero-density estimates is $o(Uy)$ for $U\in [\log^{16+\varepsilon}N,y]$ for any $\varepsilon>0$ and an appropriate choice of $y$.

\subsection{The major arcs under zero-density estimates}
In this section we study the integral $R_\fM$ on the major arcs $\fM$ under the assumption of Condition \ref{con: zero density}. The first Lemma of this section calculates the first two error terms of the standard circle method.

\begin{lemma}\label{lemma: major arcs density 1}
    Assume Condition \ref{con: zero density}. For any $0<\varepsilon_1<\frac{1}{3}$, let $y\in[N^{1-\frac{1}{A}}\exp(\log^{\frac{2}{3}+\varepsilon_1} N),N/2]$ and for any $0< \varepsilon_0<\varepsilon_2<\frac{1}{3}$ let $U\in [y^{1-\frac{2}{A}}\exp(\log^{\frac{2}{3}+\varepsilon_2}y),y]$. We have 
    \begin{equation*}
        R_\fM =\sum_{q\leq P}\frac{\mu^2(q)}{\varphi^2(q)}\psum_{a(q)} \int_{-\frac{1}{qQ}}^{\frac{1}{qQ}} S_1(\frac{a}{q}+\beta) v_2(\beta) v_3(\beta) \e(-N(\frac{a}{q}+\beta))\diff\beta + \cO\bigg(\frac{Uy}{\exp(\log^{\varepsilon_0}N)}\bigg).
    \end{equation*}
\end{lemma}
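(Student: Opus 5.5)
On each major arc $\fM(q,a)$ with $q\le P$, I would write $\alpha=\frac{a}{q}+\beta$ and expand the two generating functions
\begin{equation*}
S_2(\alpha)=\frac{\mu(q)}{\varphi(q)}v_2(\beta)+\Delta_2(\alpha),\qquad
S_3(\alpha)=\frac{\mu(q)}{\varphi(q)}v_3(\beta)+\Delta_3(\alpha).
\end{equation*}
Summing over the disjoint arcs (Lemma \ref{lemma: disjoint major arcs}), the product $S_2S_3$ becomes
\begin{equation*}
\frac{\mu^2(q)}{\varphi^2(q)}v_2v_3+\frac{\mu(q)}{\varphi(q)}v_2\Delta_3+\Delta_2 S_3 .
\end{equation*}
The first term, multiplied by $S_1$, is exactly the main term in the statement. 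It remains to show that the two error integrals
\begin{equation*}
E_1=\sum_{q,a}\int |S_1|\,|v_2|\,|\Delta_3|\frac{1}{\varphi(q)}, \qquad E_2=\sum_{q,a}\int |S_1|\,|\Delta_2|\,|S_3|
\end{equation*}
are $\ll Uy\exp(-\log^{\varepsilon_0}N)$.

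\textbf{Bounding $E_2$.} I would use $|S_3|\le\sum_{p\le U}\log p\ll U$. Then Cauchy--Schwarz gives
\begin{equation*}
E_2\ll U\Big(\int_\fM|S_1|^2\Big)^{1/2}\Big(\sum_{q,a}\int_{\fM(q,a)}|\Delta_2|^2\Big)^{1/2}.
\end{equation*}
By Lemma \ref{lemma: Parseval}, whose admissible range contains $y$, the first factor is $\ll (y\log N)^{1/2}$. For the second factor I would apply Lemma \ref{lemma: Gallagher} and then Lemma \ref{lemma: almost all zero density} with $X\asymp y$ and window length $qQ/2$:
\begin{equation*}
\sum_{q\le P}\frac{1}{(qQ)^2}\psum_{a}\int_y^{2y}\Big|\Delta\big(t,\tfrac{qQ}{2},\tfrac aq\big)\Big|^2\diff t\ll \sum_{q\le P} y\exp(-\log^{\varepsilon_1'}y),
\end{equation*}
which is $\ll y\exp(-\log^{\varepsilon_1'}y)$ after absorbing the factor $P=\log^{c_1}N$. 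This step needs $qQ\ge y^{1-\frac2A}\exp(\log^{\frac23+\varepsilon}y)$. That holds because $Q=U/\log^{c_2}N$ and $U\ge y^{1-\frac2A}\exp(\log^{\frac23+\varepsilon_2}y)$, with the $\log$ powers swallowed by slightly decreasing $\varepsilon_2$. Combining, $E_2\ll Uy\exp(-\tfrac12\log^{\varepsilon'}y)$. Since $y\ge N^{1-1/A}$ gives $\log y\asymp\log N$, this is of the required form with $\varepsilon_0<\varepsilon_2$.

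\textbf{Bounding $E_1$.} For $\Delta_3$ the short-interval case is exactly the point where the interval is $(0,U]$. Lemma \ref{lemma: Generating function} gives $|\Delta_3(\alpha)|\le(1+|\beta|U)\max_{t\le U}|\Delta(U,t,a/q)|$, but the pointwise bound of Lemma \ref{lemma: all zero density} does not cover small $t$. I would therefore instead treat $E_1$ like $E_2$ by Cauchy--Schwarz:
\begin{equation*}
E_1\ll \sup_{\fM}|S_1|\Big(\int|v_2|^2\Big)^{1/2}\Big(\sum\int|\Delta_3|^2\Big)^{1/2}.
\end{equation*}
Using $|S_1|\ll y$ is too lossy here. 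A better pairing is
\begin{equation*}
E_1\ll \sup_{\fM}|v_2|\Big(\int|S_1|^2\Big)^{1/2}\Big(\sum\int|\Delta_3|^2\Big)^{1/2}\ll y\,(y\log N)^{1/2}\Big(\sum\int|\Delta_3|^2\Big)^{1/2}.
\end{equation*}
By Lemma \ref{lemma: Gallagher} the last factor is controlled by $\frac{1}{(qQ)^2}\int_0^U|\Delta(t,qQ/2,a/q)|^2\diff t$. I would split $t$ dyadically, apply Lemma \ref{lemma: almost all zero density} on each block $[X,2X]$ with $X\ge U/\log^{B}N$ (the range condition is weaker for smaller $X$), and bound the small-$t$ tail trivially. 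This gives
\begin{equation*}
\sum\int|\Delta_3|^2\ll U\exp(-\log^{\varepsilon'}U)+U/\log^{B}N,
\end{equation*}
so $E_1\ll y^{3/2}U^{1/2}(\cdots)$. That is only $\le Uy$ if $y\le U$, which is false.

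The correct pairing is instead
\begin{equation*}
E_1\ll \Big(\int_\fM|S_1|^2\Big)^{1/2}\Big(\sum\int|v_2\Delta_3|^2\Big)^{1/2}.
\end{equation*}
Using $|v_2|\ll\min(y,|\beta|^{-1})$ and $|\Delta_3|\ll U$ in the region $|\beta|>1/U$, and the Gallagher mean square in the region $|\beta|\le 1/U$, splitting $\beta$ dyadically makes each piece carry a saving. I expect this bookkeeping for $E_1$, where the short-interval mean-square bound must be transferred to a weighted integral against $|v_2|^2$, to be the main obstacle. If it fails, I would fall back on Lemma \ref{lemma: Generating function} together with the unconditional Lemma \ref{lemma: all unconditional} for the large-$t$ part.
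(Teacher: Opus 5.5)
\textbf{There is a genuine gap: the term $E_1$ is never bounded.} Your bound for $E_2$ is correct and matches the paper's treatment of $I_2$: a trivial bound on the third factor, Cauchy--Schwarz against $\int_\fM|S_1|^2\ll y\log N$, then Lemma \ref{lemma: Gallagher} and Lemma \ref{lemma: almost all zero density} with $h=qQ/2$, $X=y$.

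The obstacle you cite for $E_1$ is not real. In $\max_{0<t\le U}|\Delta(U,t,\frac aq)|$ the short windows need no nontrivial input. The target is a saving relative to $U$, not relative to $t$. For $t$ below the range of Lemma \ref{lemma: all unconditional} (or of Lemma \ref{lemma: all zero density}), the trivial bound $|\Delta(U,t,\frac aq)|\ll t\log U$ is already of that size. On $\fM$ you have $1+|\beta|U\le 1+U/(qQ)\le 1+\log^{c_2}N$. Also $q\le P\le \log^{3c_1/2}U$, because $\log U\ge \log^{2/3}N$. So Lemmas \ref{Lemma: Generating function} and \ref{lemma: all unconditional} give $\sup_\fM|\Delta_3|\ll U\exp(-\log^{\varepsilon}N)$ for any $\varepsilon<\frac25$.

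The H\"older pairing you never tried is the one that closes the argument. Pull out $\sup_\fM|\Delta_3|$ and apply Cauchy--Schwarz to the remaining two factors, using $\int_\fM|S_1|^2\ll y\log N$ and $\sum_{q\le P}\varphi(q)^{-2}\sum_a\int|v_2|^2\ll y\log P$. This gives $E_1\ll Uy\exp(-\log^{\varepsilon_0}N)$ at once, since $\varepsilon_0<\frac13<\frac25$. This is exactly the paper's argument; it groups the term as $S_1S_2\Delta_3$ and uses $\int_\fM|S_2|^2\ll y\log N$ instead. Your stated fallback is this argument, but only once it is combined with this pairing.

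The $L^2$--$L^2$ route you call correct would not close as sketched, for two reasons.
\begin{itemize}
\item On $|\beta|>1/U$ the trivial bound $|\Delta_3|\ll U$ gives $\sum\int|v_2\Delta_3|^2\ll U^3\log P$. That piece alone yields only $E_1\ll Uy\,(U/y)^{1/2}(\log N\log P)^{1/2}$, with no $\exp(-\log^{\varepsilon_0}N)$ saving when $U$ is comparable to $y$; the lemma allows $U\le y$.
\item On the remaining range, Gallagher's lemma with window length $H\ge U$ does not produce a short-interval mean square. $\Delta_3$ has frequencies in $(0,U]$, so every window containing $(0,U]$ returns the single number $\Delta(U,U,\frac aq)$. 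The resulting bound contains the term $(H-U)H^{-2}|\Delta(U,U,\frac aq)|^2$. Weighted by $|v_2|^2\asymp y^2$ on $|\beta|\le 1/(2y)$, i.e.\ at $H\asymp y$, this is of size $y|\Delta(U,U,\frac aq)|^2$.
\end{itemize}
So you would need $\Delta(U,U,\frac aq)\ll U\exp(-\log^{\varepsilon_0}N)$, a pointwise prime number theorem in progressions at the endpoint $U$. Lemma \ref{lemma: almost all zero density} cannot supply that; a pointwise lemma is needed in any case.
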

\begin{proof}

By Lemma \ref{lemma: disjoint major arcs} we may split the integral over $\fM$ into a sum of integrals over disjoint arcs $\fM(q,a)$ with $1\leq a\leq q \leq P=\log^{c_1}N$ and $(a,q)=1$. We obtain
\begin{equation}\label{eq: major arcs density I1 I2}
\begin{split}
    R_\fM =& \sum_{q\leq P}\psum_{a(q)} \int_{-\frac{1}{qQ}}^{\frac{1}{qQ}} S_1(\frac{a}{q}+\beta) S_2(\frac{a}{q}+\beta) S_3(\frac{a}{q}+\beta) \e(-N(\frac{a}{q}+\beta))\diff\beta\\
    =& \sum_{q\leq P}\frac{\mu^2(q)}{\varphi^2(q)}\psum_{a(q)} \int_{-\frac{1}{qQ}}^{\frac{1}{qQ}} S_1(\frac{a}{q}+\beta) v_2(\beta) v_3(\beta) \e(-N(\frac{a}{q}+\beta))\diff\beta + I_1+I_2,
\end{split}
\end{equation}
where
\begin{equation*}
    I_1:=\int_\fM S_1(\alpha) S_2(\alpha) \Delta_3(\alpha) \e(-N\alpha)\diff\alpha,
\end{equation*}
and
\begin{equation*}
    I_2:=\int_\fM S_1(\alpha) \Delta_2(\alpha) (S_3(\alpha)-\Delta_3(\alpha)) \e(-N\alpha)\diff\alpha.
\end{equation*}
By H\"older's inequality and Lemmas \ref{Lemma: Generating function} \& \ref{lemma: Parseval} we have
\begin{equation}\label{eq: major arcs density I1}
\begin{split}
    I_1\leq&\sup_{\alpha\in\fM}|\Delta_3(\alpha)| \bigg(\int_\fM |S_1(\alpha)|^2\diff\alpha \int_\fM|S_2(\alpha)|^2 \diff\alpha\bigg)^\frac{1}{2}\\
    \ll& y\log N \max_{\alpha\in\fM}|\Delta_3(\alpha)| \\
    \ll& y\log N \max_{q\leq P}\frac{U}{qQ}\max_{(a,q)=1} \max_{0<t\leq U} |\Delta(U,t,\frac{a}{q})|
\end{split}
\end{equation}
for $y\in[N^{1-\frac{1}{A}}\exp(\log^{\frac{2}{3}+\varepsilon_1} N),N/2]$. By applying Lemma \ref{lemma: all unconditional} and the lower bound \mbox{$U\geq \exp(\log^\frac{2}{3}N)$} we obtain 
\begin{equation*}
    I_1\ll Uy\exp(-\log^{\varepsilon_3}U)\ll Uy\exp(-\log^{\varepsilon_0}N)
\end{equation*}
for any $0<\varepsilon_0<\frac{1}{3}$ and $\frac{1}{2}<\varepsilon_3<\frac{2}{5}$. For $I_2$ we apply H\"older's inequality and Lemmas \ref{lemma: Gallagher}, \ref{lemma: Parseval} \& \ref{lemma: geometric bound} to obtain

\begin{equation}\label{eq: major arcs density I2}
\begin{split}
    I_2\leq& \bigg(\int_\fM |S_1(\alpha)|^2\diff \alpha \int_\fM |\Delta_2(\alpha)(S_3(\alpha)-\Delta_3(\alpha))|^2 \diff\alpha\bigg)^\frac{1}{2}\\
    \ll& \sqrt{y\log N} \bigg(\sum_{q\leq P}\psum_{a\thinspace(q)}\int_{-\frac{1}{qQ}}^{\frac{1}{qQ}} \bigg|\Delta_2(\frac{a}{q}+\beta)\frac{v_3(\beta)}{\varphi(q)}\bigg|^2 \diff\beta\bigg)^\frac{1}{2}\\
    \ll& U\sqrt{y\log N} \bigg(\sum_{q\leq P}\frac{1}{(\varphi(q)qQ)^2}\psum_{a\thinspace(q)}\int_y^{2y} |\Delta(t,\frac{qQ}{2},\frac{a}{q})|^2 \diff t\bigg)^\frac{1}{2}
\end{split}
\end{equation}
for $y\in[N^{1-\frac{1}{A}}\exp(\log^{\frac{2}{3}+\varepsilon_1} N),N/2]$. By applying Lemmas \ref{lemma: almost all zero density} \& \ref{lemma: totient lower bound} we obtain
\begin{equation*}
\begin{split}
    I_2\ll&Uy\sqrt{\log N} \exp(-\frac{1}{2}\log^{\varepsilon_4}y)\bigg(\sum_{q\leq P}\frac{1}{\varphi^2(q)}\bigg)^\frac{1}{2}\\
    \ll& Uy\exp(-\log^{\varepsilon_0}N),
\end{split}
\end{equation*}
for any $0<\varepsilon_0<\varepsilon_4<\varepsilon_2<\frac{1}{3}$. Note that this is independent of the choices of $P=\log^{c_1}N$ and $Q=U/\log^{c_2}N$. 
\end{proof}
Now, we are unfortunately unable to calculate the third error term of the circle method as one usually would. By Gallagher's inequality \cite{Gal70} we would obtain an error term of the form 
\begin{equation*}
    \int_{N-2y}^{N-y}|\Delta(t,\frac{qQ}{2},\frac{a}{q})|^2\diff t,
\end{equation*}
which we are unable to estimate for small $y$ (compared to $N$). Therefore, in the following Lemma we reduce the ternary Goldbach problem on the major arcs to the binary Goldbach problem, thereby incurring an error term of $\cO(U^2)$. This is also the reason we require \mbox{$U=o(y)$.}
\begin{lemma}\label{lemma: major arcs density 2}
    Assume Condition \ref{con: zero density}. Let $0<\varepsilon_0<\varepsilon_1$ and let $0<\varepsilon_2<\frac{1}{3}$. Let $y\in [N^{1-\frac{1}{A}}\exp(\log^{\frac{2}{3}+\varepsilon_2}N),N/2]$ and $U\in[y^{1-\frac{1}{A}}\exp(\log^{\frac{2}{3}+\varepsilon_2}y),y/\log^{\varepsilon_1}N]$. We have
    \begin{equation*}
        R_\fM =\mathfrak{S}(N)Uy +\cO\bigg(\frac{Uy}{\log^{c_1-\varepsilon_3}N} +\frac{Uy}{\log^{\varepsilon_0}N}+Uy(\log N)^{\frac{1+3(c_1-c_2)}{2}} \bigg),
    \end{equation*}
    for any $0<\varepsilon_0<c_1<c_2-\frac{1}{3}$.
\end{lemma}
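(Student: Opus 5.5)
We start from the main term isolated in Lemma \ref{lemma: major arcs density 1},
\begin{equation*}
M:=\sum_{q\leq P}\frac{\mu^2(q)}{\varphi^2(q)}\psum_{a(q)} \int_{-\frac{1}{qQ}}^{\frac{1}{qQ}} S_1(\tfrac{a}{q}+\beta) v_2(\beta) v_3(\beta) \e(-N(\tfrac{a}{q}+\beta))\diff\beta .
\end{equation*}
The idea is to avoid writing $S_1=\frac{\mu(q)}{\varphi(q)}v_1+\Delta_1$. Instead we reduce to a binary problem. We expand $v_2(\beta)v_3(\beta)=\sum_{y<m\leq 2y}\sum_{k\leq U}\e((m+k)\beta)$ and $S_1$ as its sum over $p_1$. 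The $\beta$-integral then becomes an integral of $\e((p_1+m+k-N)\beta)$ over $|\beta|\leq 1/(qQ)$.

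We first extend that integral to the full unit interval $|\beta|\leq 1/2$. The extension error is controlled by Lemma \ref{lemma: geometric bound} together with Cauchy--Schwarz and Lemma \ref{lemma: Parseval} for $S_1$. On the complementary range $|\beta|>1/(qQ)$ we have $|v_3(\beta)|\ll qQ$ and $|v_2(\beta)|\ll|\beta|^{-1}$. This gives a bound of roughly
\begin{equation*}
\sum_{q\leq P}\frac{q}{\varphi^2(q)}\,qQ\,\sqrt{y\log N}\,\Big(\int_{|\beta|>1/(qQ)}|\beta|^{-2}\Big)^{1/2}\ll \sqrt{y}\,(qQ)^{3/2}.
\end{equation*}
Summed over $q\leq P$ and inserting $Q=U\log^{-c_2}N$, $P=\log^{c_1}N$, I expect this to produce exactly the stated term $Uy(\log N)^{\frac{1+3(c_1-c_2)}{2}}$, using $U\leq y$. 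This bookkeeping is where the constraint $c_1<c_2-\frac13$ should appear.

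After extension, orthogonality (Lemma \ref{lemma: orth relation}) in $\beta$ counts solutions of $p_1+m+k=N$. For each $p_1\in(N-2y,N-y]$ the number of pairs $(m,k)$ with $m+k=N-p_1$ is $U$, apart from boundary effects. These arise only when $N-p_1$ lies within $U$ of the endpoints $y$ or $2y$, and cost $\ll U^2\log N\cdot(\text{number of such }p_1)$, i.e.\ $O(U^2)$ times logs. Since $U\leq y/\log^{\varepsilon_1}N$, this is $\ll Uy\log^{-\varepsilon_0}N$; this is precisely why $U=o(y)$ is required. The main term becomes
\begin{equation*}
U\sum_{q\leq P}\frac{\mu^2(q)}{\varphi^2(q)}\psum_{a(q)}\eq(-Na)\sum_{N-2y<p_1\leq N-y}\log p_1\,\eq(ap_1).
\end{equation*}
Here $\mu^2=\mu^3/\mu$, and the sign $\mu(q)$ is reinstated through the prime sum.

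Next we evaluate the inner prime sum at the rational point $a/q$ using Lemma \ref{lemma: all zero density}. With $h=y\geq N^{1-1/A}\exp(\log^{2/3+\varepsilon_2}N)$ and $q\leq \log^{c_1}N$, it gives
\begin{equation*}
S_1(a/q)=\frac{\mu(q)}{\varphi(q)}\,y+O\big(y\exp(-\log^{\varepsilon}N)\big).
\end{equation*}
The error, summed over $a$ and $q\leq P$, contributes $Uy\,P\exp(-\log^{\varepsilon}N)$, which is negligible. The main term yields $Uy\,\mathfrak{S}(N,P)$. Lemmas \ref{lemma: Ramanujan sum error term} and \ref{lemma: Ramanujan sum multiplicative} convert this to $\mathfrak{S}(N)Uy+O(UyP^{-1+\varepsilon})$, which is the term $Uy\log^{-(c_1-\varepsilon_3)}N$.

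Finally we combine this with the $O(Uy\exp(-\log^{\varepsilon_0}N))$ from Lemma \ref{lemma: major arcs density 1}. The main obstacle is the extension step: getting the $\beta$-tail bound sharp enough, with the right powers of $qQ$ and the mean-square of $S_1$, to match the exponent $\frac{1+3(c_1-c_2)}{2}$. If the direct Cauchy--Schwarz is too lossy, I would first try to bound $\int_{|\beta|>1/(qQ)}|v_2v_3|$ directly via $\min(U,|\beta|^{-1})\min(y,|\beta|^{-1})$, combined with the trivial bound $|S_1|\ll y$.
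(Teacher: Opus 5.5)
Your proposal follows the paper's proof essentially step by step. You extend each major-arc integral to $|\beta|\le\frac12$ with the Cauchy--Schwarz/Parseval tail bound $\ll (qQ)^{3/2}\sqrt{y\log N}$; summed over $q\le P$, this gives the paper's $U^{3/2}y^{1/2}(\log N)^{\frac{1+(3+\varepsilon)c_1-3c_2}{2}}$. You then count the pairs $(n_2,n_3)$ by orthogonality to get $U\e(-Na/q)S_1(a/q)+O(U^2)$, evaluate $S_1(a/q)$ by Lemma \ref{lemma: all zero density}, and complete the singular series.

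The one point to tighten is the boundary term. Only $p_1\in(N-y-U,N-y]$ contribute, and you should bound their total by $U\sum_{N-y-U<p_1\le N-y}\log p_1\ll U^2$, using Brun--Titchmarsh together with $\log U\asymp\log N$. Your phrase ``$O(U^2)$ times logs'' is not enough: a factor of $\log N$ (from counting integers instead of primes) could not be absorbed by the saving $U\le y/\log^{\varepsilon_1}N$ when $\varepsilon_1<1$.
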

\begin{proof}
    We take Lemma \ref{lemma: major arcs density 1} and extend the integral in the main term to an entire unit interval, thereby incurring an error term. We may bound this error term by applying H\"older's inequality and Lemmas \ref{lemma: Parseval}, \ref{lemma: geometric bound} \& \ref{lemma: totient lower bound}.
    \begin{equation}\label{eq: major arcs density extension}
    \begin{split}
        R_\fM=&\sum_{q\leq P}\frac{\mu^2(q)}{\varphi^2(q)}\psum_{a(q)} \int_{-\frac{1}{2}}^{\frac{1}{2}} S_1(\frac{a}{q}+\beta) v_2(\beta) v_3(\beta) \e(-N(\frac{a}{q}+\beta))\diff\beta +\cO\bigg(\frac{Uy}{\exp(\log^{\varepsilon_4} N)}\bigg)\\
        &+\cO\bigg(\sum_{q\leq P}\frac{1}{\varphi^2(q)}\psum_{a(q)} \bigg(\int_{\frac{1}{qQ}}^{\frac{1}{2}} |v_2(\beta) v_3(\beta)|^2  \diff\beta \int_{\frac{1}{qQ}}^{\frac{1}{2}} |S_1(\frac{a}{q}+\beta)|^2  \diff\beta\bigg)^\frac{1}{2} \bigg)\\
        =&\sum_{q\leq P}\frac{\mu^2(q)}{\varphi^2(q)}\psum_{a(q)} \int_{-\frac{1}{2}}^{\frac{1}{2}} S_1(\frac{a}{q}+\beta) v_2(\beta) v_3(\beta) \e(-N(\frac{a}{q}+\beta))\diff\beta +\cO\bigg(\frac{Uy}{\exp(\log^{\varepsilon_4} N)}\bigg)\\
        &+\cO(U^\frac{3}{2}y^\frac{1}{2}(\log N)^{\frac{1+(3+\varepsilon_5)c_1-3c_2}{2}}),
    \end{split}
    \end{equation}
    for any $0<\varepsilon_4<\varepsilon_2$ and $\varepsilon_5>0$. Now let $1\leq a \leq q \leq P$ with $(a,q)=1$. We take the integral in the main term of \eqref{eq: major arcs density extension} and exchange the order of integration and summation.
    \begin{equation*}
    \begin{split}
        I_3(q,a):=&\int_{-\frac{1}{2}}^{\frac{1}{2}} S_1(\frac{a}{q}+\beta) v_2(\beta) v_3(\beta) \e(-N(\frac{a}{q}+\beta))\diff\beta\\
        =&\sum_{N-2y<p_1\leq N-y}\log p_1 \e((p_1-N)\frac{a}{q})\sum_{\substack{y<n_2\leq 2y\\n_3\leq U}}\int_{-\frac{1}{2}}^{\frac{1}{2}} \e((p_1+n_2+n_3-N)\beta)\diff\beta.
    \end{split}
    \end{equation*}
    By Lemma \ref{lemma: orth relation}, the double sum over $n_2,n_3$ becomes the number of positive integer solutions to the equation $n_2+n_3=N-p_1$, where $y< n_2\leq 2y$, $n_3\leq U$ and $p_1$ is fixed. The solution to this combinatorial problem is $\min(U,N-y-p_1)$. Therefore we have
    \begin{equation}\label{eq: major arcs density I3}
    \begin{split}
        I_3(q,a)=&\sum_{N-2y<p_1\leq N-y}\log p_1 \e((p_1-N)\frac{a}{q})\min(U,N-y-p_1)\\
        =&U\e(-Na/q)S_1(\frac{a}{q})+\cO(U^2)\\
        =&Uy\frac{\mu(q)}{\varphi(q)}\e(-Na/q)+\cO(U\Delta_1(\frac{a}{q}) +U^2).
    \end{split}
    \end{equation}
    We combine \eqref{eq: major arcs density extension} \& \eqref{eq: major arcs density I3} and apply Lemma \ref{lemma: all zero density} for $y\in [N^{1-\frac{1}{A}}\exp(\log^{\frac{2}{3}+\varepsilon_2}N),N/2]$ as well as Lemma \ref{lemma: Ramanujan sum error term} to extend the sum over $q$ to infinity.
    \begin{equation*}
    \begin{split}
        R_\fM=&Uy\sum_{q\leq P}\frac{\mu^3(q)}{\varphi^3(q)}\psum_{a(q)}\e(-Na/q)+\cO\bigg(\bigg(\frac{Uy}{\exp(\log^{\varepsilon_4}N)} +U^2\bigg)\sum_{q\leq P}\frac{1}{\varphi(q)}\bigg)\\
        &+\cO\bigg(\frac{Uy}{\exp(\log^{\varepsilon_4} N)}+U^\frac{3}{2}y^\frac{1}{2}(\log N)^{\frac{1+(3+\varepsilon_5)c_1-3c_2}{2}}\bigg)\\
        =& \mathfrak{S}(N,\infty)Uy +\cO\bigg(\frac{Uy}{\log^{\varepsilon_0}N}+Uy(\log N)^{\frac{1+3(c_1-c_2)}{2}} \bigg).
    \end{split}
    \end{equation*}
    Lastly we apply Lemma \ref{lemma: Ramanujan sum multiplicative} to complete the proof.
\end{proof}
\subsection{The weighted asymptotic formula under zero-density estimates}
In this subsection we combine the major and minor arcs to obtain a weighted asymptotic formula for $R(N,y,U)$ given a zero-density estimate. Theorem \ref{thm: main zero density} follows from Lemma \ref{lemma: weighted reps} and the following Lemma.
\begin{lemma}
    Assume Condition \ref{con: zero density}. Let $0<\varepsilon_0<\varepsilon_1$ and let $0<\varepsilon_2<\frac{1}{3}$. For any $y\in [N^{1-\frac{1}{A}}\exp(\log^{\frac{2}{3}+\varepsilon_2}N),N/2]$ and $U\in[y^{1-\frac{1}{A}}\exp(\log^{\frac{2}{3}+\varepsilon_2}y),y/\log^{\varepsilon_1}N]$. We have
    \begin{equation*}
        R(N,y,U) =\mathfrak{S}(N)Uy +\cO\bigg(\frac{Uy}{\log^{\varepsilon_0}N}\bigg).
    \end{equation*}
\end{lemma}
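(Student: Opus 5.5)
The plan is to combine the major/minor arc decomposition of Lemma \ref{lemma: R major minor split} with the two arc estimates already established, and then to choose the parameters $c_1<c_2$ (hence $P=\log^{c_1}N$ and $Q=U/\log^{c_2}N$) so that every error term is $\cO(Uy/\log^{\varepsilon_0}N)$.

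\textbf{Decomposition.} By Lemma \ref{lemma: R major minor split} we have $R(N,y,U)=R_\fM+R_\fm$.

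\textbf{Minor arcs.} We bound $R_\fm$ by Lemma \ref{lemma: minor arcs density}. This gives $R_\fm\ll Uy/\log^{\varepsilon'}N$ provided $8+2\varepsilon'\le c_1<c_2$ and $U\ge \log^k N$ for some $k>c_1+c_2$. Since $U\ge y^{1-\frac1A}\exp(\log^{\frac23+\varepsilon_2}y)$ and $y\ge N^{1-\frac1A}$, the quantity $U$ exceeds every fixed power of $\log N$. The hypothesis on $U$ therefore holds for whatever constants $c_1,c_2$ we fix. The range for $y$ required by Lemma \ref{lemma: minor arcs density} is exactly the one assumed here.

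\textbf{Major arcs.} We apply Lemma \ref{lemma: major arcs density 2}, whose hypotheses on $y$ and $U$ coincide with those of the present statement. It gives
\begin{equation*}
R_\fM=\mathfrak{S}(N)Uy+\cO\bigg(\frac{Uy}{\log^{c_1-\varepsilon_3}N}+\frac{Uy}{\log^{\varepsilon_0}N}+Uy(\log N)^{\frac{1+3(c_1-c_2)}{2}}\bigg).
\end{equation*}
We fix, say, $c_1=9$, which is at least $8+2\varepsilon'$ for some $\varepsilon'\ge\varepsilon_0$. We then take $c_2$ large enough that $\frac{1+3(c_1-c_2)}{2}\le-\varepsilon_0$, for instance $c_2=c_1+1$, since $\frac{1-3}{2}=-1$. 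The first error term is then $\cO(Uy/\log^{\varepsilon_0}N)$ as well, since $c_1-\varepsilon_3\ge \varepsilon_0$ for small $\varepsilon_3$.

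\textbf{Remaining checks.} We must also confirm the side conditions $P\le Q/2$ and $0<\varepsilon_0<c_1<c_2-\frac13$ used in defining the arcs. Both hold trivially, because $Q=U/\log^{c_2}N$ is a positive power of $N$ times $\exp(\cdot)$ divided by a power of the logarithm.

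\textbf{Main obstacle.} The only delicate point is parameter consistency. The $\varepsilon_0$ in Lemma \ref{lemma: major arcs density 2} ultimately comes from the constraint $U\le y/\log^{\varepsilon_1}N$, through the $\cO(U^2)$ term there: $U^2\sum_{q\le P}\varphi(q)^{-1}$ is $Uy\log^{-\varepsilon_1}N$ up to a factor $\log P\ll\log\log N$. This is why $\varepsilon_0<\varepsilon_1$ is needed.

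I would verify that this loss of $\log\log N$ is absorbed by the strict inequality. Beyond that, summing the two arc contributions yields the claimed formula.
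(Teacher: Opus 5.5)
Your proposal is correct and is the paper's argument: split $R=R_\fM+R_\fm$, bound $R_\fm$ by Lemma~\ref{lemma: minor arcs density} and $R_\fM$ by Lemma~\ref{lemma: major arcs density 2}, and choose $c_1,c_2$ suitably (the paper simply takes $8+2\varepsilon_0<c_1<c_2-\frac13$). Your explicit requirement that $c_2-c_1$ be large enough for $\frac{1+3(c_1-c_2)}{2}\le-\varepsilon_0$ is slightly more careful than the paper's stated constraint; however, your fixed choice $c_1=9$ only works for $\varepsilon_0\le\frac12$, so in general take $c_1\ge 8+2\varepsilon_0$.
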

\begin{proof}
    The proof follows directly from Lemmas \ref{lemma: minor arcs density} \& \ref{lemma: major arcs density 2} by taking any \mbox{$8<8+2\varepsilon_0< c_1<c_2-\frac{1}{3}$.}
\end{proof}

\section{Proof of Theorem \ref{thm: main GRH}}
In this section we prove Theorem \ref{thm: main GRH}. We assume Condition \nameref{con: GRH} throughout the section.
\subsection{The minor arcs under \nameref{con: GRH}}
In this section we study the integral $R_{\fm}$ on the minor arcs under the assumption of \nameref{con: GRH}. In contrast to the zero-density estimate case, we are able to maintain a smaller $P$ (and therefore a smaller contribution from the error terms of the major arcs) by directly calculating an upper bound via Lemma \ref{lemma: all GRH}. This is possible since in the \nameref{con: GRH} case we are not restricted by a logarithmic upper bound on the modulus $q\leq \log^\ell U$ for some $\ell>0$.

\begin{lemma}\label{lemma: minor arcs GRH}
    Assume \nameref{con: GRH}. Let $y\in[\sqrt{N}\log^{2} N,N/2]$. Let $1+\varepsilon<c_1<c_2$ and let $U\in[\log^k N,y]$ with $k>1+2c_2+\varepsilon$ for any $\varepsilon>0$. We have
    \begin{equation*}
        R_\fm\ll \frac{Uy}{\log^{\varepsilon}N}.
    \end{equation*}
\end{lemma}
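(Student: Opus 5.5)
The plan mirrors the proof of Lemma \ref{lemma: minor arcs density}. The only change is that Weyl's inequality (Lemma \ref{lemma: Weyls inequality}) is replaced by a bound for $S_3$ on $\fm'$ coming from Lemma \ref{lemma: all GRH}. First, by H\"older's inequality and Lemma \ref{lemma: Parseval} in its \nameref{con: GRH} form, which is admissible since $y\ge \sqrt{N}\log^2 N$ and so covers both $S_1$ (length $y$ near $N$) and $S_2$, we get
\begin{equation*}
R_\fm\ll \sup_{\alpha\in\fm}|S_3(\alpha)|\bigg(\int_\fm|S_1|^2\int_\fm|S_2|^2\bigg)^{1/2}\ll y\log N\sup_{\alpha\in\fm'}|S_3(\alpha)|.
\end{equation*}
It therefore suffices to show $\sup_{\alpha\in\fm'}|S_3(\alpha)|\ll U\log^{-1-\varepsilon}N$.

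Fix $\alpha=a/q+\beta\in\fM(q,a)$ with $P<q\le Q$, so that $|\beta|\le 1/(qQ)$. We write
\begin{equation*}
S_3(\alpha)=\frac{\mu(q)}{\varphi(q)}v_3(\beta)+\Delta_3(\alpha).
\end{equation*}
For the main term, Lemmas \ref{lemma: geometric bound} and \ref{lemma: totient lower bound} give $\ll U/\varphi(q)\ll U q^{-1+\varepsilon'}\ll U P^{-1+\varepsilon'}=U\log^{-c_1(1-\varepsilon')}N$. This is $\ll U\log^{-1-\varepsilon}N$ because $c_1>1+\varepsilon$, after choosing $\varepsilon'$ small enough. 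For the error term we apply Lemma \ref{Lemma: Generating function}:
\begin{equation*}
|\Delta_3(\alpha)|\le \bigg(1+\frac{U}{qQ}\bigg)\max_{0<t\le U}\bigg|\Delta\bigg(U,t,\frac{a}{q}\bigg)\bigg|.
\end{equation*}
Here $U/(qQ)\le \log^{c_2}N/q$.

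The inner maximum is the main obstacle, because Lemma \ref{lemma: all GRH} only applies for $t\ge \sqrt{qU}\log^{2+\varepsilon}(qU)$. I would split the range of $t$. For small $t$ we use the trivial bound $|\Delta(U,t,a/q)|\ll t\log U+t/\varphi(q)\ll \sqrt{qU}\log^{3+\varepsilon}U$. For large $t$, Lemma \ref{lemma: all GRH} gives $\ll\sqrt{qU}\log^2(qU)$. Since $q\le Q\le U$, the maximum is in all cases $\ll\sqrt{qU}\log^{3+\varepsilon}U$.

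It remains to combine the factors. Using $q\le Q=U\log^{-c_2}N$,
\begin{equation*}
|\Delta_3(\alpha)|\ll \bigg(1+\frac{\log^{c_2}N}{q}\bigg)\sqrt{qU}\log^{3+\varepsilon}N\ll \Big(U\log^{-c_2/2}N+\sqrt{U}\log^{c_2}N\Big)\log^{3+\varepsilon}N.
\end{equation*}
Here the term $\sqrt{qU}\log^{c_2}N/q$ is maximized at the smallest $q$, where it is at most $\sqrt{U}\log^{c_2}N$. The second piece is $\le U\log^{-1-\varepsilon}N$ once $U\ge\log^k N$ with $k$ large compared with $2c_2$, which is the role of the hypothesis $k>1+2c_2+\varepsilon$. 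The first piece needs $c_2$ somewhat large, and that requires the choice of parameters to be made consistently.

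If the logarithmic bookkeeping proves too tight, I would replace the trivial small-$t$ estimate by an application of Lemma \ref{lemma: all GRH} on the full interval and subtract, since $\Delta(U,t)=\Delta(U,U)-\Delta(U-t,U-t)$ differences at full length satisfy the GRH hypothesis. This removes the extra logarithm, so that $\max_t|\Delta|\ll\sqrt{qU}\log^2 U$ and the exponents fit the stated conditions $c_1>1+\varepsilon$ and $k>1+2c_2+\varepsilon$. Multiplying the resulting bound by $y\log N$ then gives $R_\fm\ll Uy\log^{-\varepsilon}N$.
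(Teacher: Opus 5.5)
You follow the paper's route: H\"older and Parseval reduce the problem to $y\log N\sup_{\fm'}|S_3|$, then you split $S_3=\frac{\mu(q)}{\varphi(q)}v_3+\Delta_3$, using the totient bound for the first part and Lemma~\ref{Lemma: Generating function} with the GRH bound for the second. Your treatment of $U/\varphi(q)$ is fine.

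\textbf{The genuine gap.} It is the piece you flagged yourself and then declared settled. The ``$1$'' in $1+\frac{U}{qQ}$ leaves the term $\sqrt{qU}\log^2U$. For $q$ near $Q=U\log^{-c_2}N$ this equals $U\log^{-c_2/2}N\log^2U$, so this part of $R_\fm$ is only bounded by $Uy\log^{1-c_2/2}N\log^2U$. The lemma allows $U$ as large as a power of $N$, and then this is $\ll Uy\log^{-\varepsilon}N$ only if $c_2\ge 6+2\varepsilon$. With your trivial small-$t$ bound you need about $c_2\ge 8$; only if $U=\log^{O(1)}N$ would $c_2>2+2\varepsilon$ suffice. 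The hypotheses, however, permit any $c_2>c_1>1+\varepsilon$. So your final sentence, that after the differencing trick ``the exponents fit the stated conditions'', is false. Weyl's inequality would not rescue it, since it carries the same $\sqrt{qU}$ term.

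The paper's proof has the same hole, only hidden. In passing from $\bigl(1+\frac{U}{qQ}\bigr)\max_{u\le U}|\Delta(U,u,\frac{a}{q})|$ to $\frac{\sqrt U}{\sqrt q}\log^{c_2}N\log^2U$, it keeps only the $\frac{U}{qQ}$ part, which dominates only for $q\le\log^{c_2}N$. With the tools available, the lemma needs an extra lower bound on $c_2$.

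\textbf{The second piece.} This does not close as written either. You bounded $\frac{U}{qQ}\sqrt{qU}=\sqrt{U/q}\,\log^{c_2}N$ using only $q\ge 1$. That requires at least $k>2c_2+2+2\varepsilon$, which the hypothesis $k>1+2c_2+\varepsilon$ does not give. The fix is what the paper does:
\begin{itemize}
\item use $q>P=\log^{c_1}N$ on $\fm'$;
\item keep $\log U$ rather than replacing it by $\log N$.
\end{itemize}
The term then becomes $\sqrt U\log^{c_2-c_1/2}N\log^2U$. Multiplied by $y\log N$, it is $\ll Uy\log^{-\varepsilon}N$ as soon as $k>2+2c_2-c_1+2\varepsilon$. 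This works because at $U=\log^kN$ the factor $\log^2U$ (even $\log^{3+\varepsilon}U$) is only a power of $\log\log N$. That condition follows from $k>1+2c_2+\varepsilon$ together with $c_1>1+\varepsilon$, which is the actual role of those two hypotheses.

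\textbf{Small $t$.} You are right that the GRH bound only covers $t\ge\sqrt{qU}\log^{2+\varepsilon}(qU)$, a point the paper passes over. But your identity $\Delta(U,t,\frac aq)=\Delta(U,U,\frac aq)-\Delta(U-t,U-t,\frac aq)$ still needs a trivial estimate when $U-t<q\log^{4+2\varepsilon}(qU)$. That estimate, $\ll q\log^{5+2\varepsilon}U$, is $\ll\sqrt{qU}\log^2U$ only for $q\le U\log^{-6-4\varepsilon}U$. This is again a largeness condition on $c_2$.
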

\begin{proof}
By the Hölder inequality and Lemmas \ref{Lemma: Generating function}, \ref{lemma: all GRH}, \ref{lemma: Parseval} \& \ref{lemma: geometric bound} we have
\begin{equation*}
\begin{split}
    R_{\fm}\ll &\sup_{\alpha\in\fm}|S_3(\alpha)|\bigg(\int_\fm |S_1(\alpha)|^2\diff \alpha \int_\fm | S_2(\alpha)|^2\diff \alpha\bigg)^{1/2}\\
    \ll & y\log N \max_{P<q\leq Q}\max_{(a,q)=1}\bigg(\frac{U}{\varphi(q)}+ \bigg(1+\frac{U}{qQ}\bigg) \max_{u\leq U}|\Delta(U,u,\frac{a}{q})|\bigg)\\
    \ll & y\log N \max_{P<q\leq Q}\bigg(\frac{U}{\varphi(q)}+ \frac{\sqrt{U}}{\sqrt{q}} \log^{c_2} N\log^2 U\bigg).
\end{split}
\end{equation*}
By Lemma \ref{lemma: totient lower bound} we then obtain
\begin{equation*}
\begin{split}
    R_{\fm}\ll & Uy \log^{1-c_1+\varepsilon_1} N + U^{\frac{1}{2}} y \log^{1+c_2-\frac{c_1}{2}} N \log^2 U\\
    \ll&\frac{Uy}{\log^{\varepsilon}N},
\end{split}
\end{equation*}
for any $0<\varepsilon_1<c_1-1-\varepsilon$.
\end{proof}
Lemma \ref{lemma: minor arcs GRH} implies that the minor arc contribution under \nameref{con: GRH} is $o(Uy)$ for $U\in [\log^{3+\varepsilon}N,y]$ for any $\varepsilon>0$ and an appropriate choice of $y$.

\subsection{The major arcs under \nameref{con: GRH}}
In this section we study the integral $R_\fM$ on the major arcs $\fM$ under the assumption of \nameref{con: GRH}.

\begin{lemma}\label{lemma: major arcs 1 GRH}
    Assume \nameref{con: GRH}. Let $y\in[\sqrt{N}\log^{2}N,N/2]$ and let $U\in[\log^k N,y]$ for any \mbox{$k>2\varepsilon+\max\{2+2c_2,3+c_2\}$} we obtain
    \begin{equation*}
        R_\fM =\sum_{q\leq P}\frac{\mu^2(q)}{\varphi^2(q)}\psum_{a(q)} \int_{-\frac{1}{qQ}}^{\frac{1}{qQ}} S_1(\frac{a}{q}+\beta) v_2(\beta) v_3(\beta) \e(-N(\frac{a}{q}+\beta))\diff\beta + \cO\bigg(\frac{Uy}{\log^{\varepsilon}N}\bigg).
    \end{equation*}
\end{lemma}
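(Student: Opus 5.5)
Mirroring the proof of Lemma \ref{lemma: major arcs density 1}, I first split $R_\fM$ by Lemma \ref{lemma: disjoint major arcs} into the disjoint arcs $\fM(q,a)$ with $q\le P=\log^{c_1}N$. On each arc I write $S_3=\frac{\mu(q)}{\varphi(q)}v_3+\Delta_3$ and $S_2=\frac{\mu(q)}{\varphi(q)}v_2+\Delta_2$. This gives the stated main term plus the two error terms $I_1$ and $I_2$, defined exactly as in \eqref{eq: major arcs density I1 I2}. It then suffices to show $I_1,I_2\ll Uy\log^{-\varepsilon}N$, now using the GRH inputs, Lemmas \ref{lemma: all GRH} and \ref{lemma: almost all GRH}, in place of the zero-density ones.

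For $I_1$ I use H\"older's inequality and Lemma \ref{lemma: Parseval}, which is valid for $y\ge\sqrt N\log^2N$ under GRH. This gives $I_1\ll y\log N\sup_{\fM}|\Delta_3|$. By Lemma \ref{Lemma: Generating function}, for $\alpha\in\fM(q,a)$,
\[
|\Delta_3(\alpha)|\le \Bigl(1+\frac{U}{qQ}\Bigr)\max_{t\le U}\Bigl|\Delta\Bigl(U,t,\frac aq\Bigr)\Bigr|,
\]
and $U/(qQ)\le \log^{c_2}N$. Lemma \ref{lemma: all GRH} bounds $\Delta(U,t,a/q)\ll\sqrt{qU}\log^2(qU)$ when $t\ge\sqrt{qU}\log^{2+\varepsilon}$. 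For smaller $t$ I use the trivial bound $|\Delta(U,t,a/q)|\ll t\log U$, which is no worse. Hence
\[
I_1\ll y\log^{1+c_2}N\,\sqrt{PU}\log^{2+\varepsilon}N .
\]
This is $\ll Uy\log^{-\varepsilon}N$ once $U\ge\log^kN$ with $k>2(3+c_2+c_1/2)+O(\varepsilon)$. I expect this to match the condition $k>3+c_2+2\varepsilon$ stated in the lemma after choosing $c_1$ small, so I will track the exponents there carefully.

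For $I_2$ I again apply H\"older's inequality, with $\int|S_1|^2\ll y\log N$, and bound $|S_3-\Delta_3|=|v_3|/\varphi(q)\le U/\varphi(q)$ by Lemma \ref{lemma: geometric bound}. Lemma \ref{lemma: Gallagher} then gives
\[
I_2\ll U\sqrt{y\log N}\Bigl(\sum_{q\le P}\frac{1}{(\varphi(q)qQ)^2}\psum_{a(q)}\int_y^{2y}\Bigl|\Delta\Bigl(t,\frac{qQ}{2},\frac aq\Bigr)\Bigr|^2\diff t\Bigr)^{1/2}.
\]
I insert Lemma \ref{lemma: almost all GRH} with $h=qQ/2$, which requires $qQ\gg q\log^{2+\varepsilon}N$; this holds because $Q=U/\log^{c_2}N$ and $U\ge\log^kN$. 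The lemma bounds the inner quantity by $q\cdot qQ\cdot y\log^2N$. The resulting sum is $\sum_{q\le P}\varphi(q)^{-2}Q^{-1}y\log^2N\ll y\log^{2+c_2}N/U$. Therefore
\[
I_2\ll U\sqrt{y\log N}\,\sqrt{y\log^{2+c_2}N/U}=y\sqrt U\log^{(3+c_2)/2}N,
\]
which is $\ll Uy\log^{-\varepsilon}N$ precisely when $U\ge\log^{3+c_2+2\varepsilon}N$. This explains the term $3+c_2$ in the hypothesis on $k$.

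The main obstacle is the bookkeeping in $I_1$: the factor $(1+U/(qQ))$ and the range restriction in Lemma \ref{lemma: all GRH} force a careful check that the power of $\log N$ needed is at most $2+2c_2$ plus the allowed slack. If the crude bound above loses too much, I will instead bound $\Delta_3$ on $\fM(q,a)$ directly by partial summation with the sharper $|\beta|\le 1/(qQ)$. Alternatively, I will treat $I_1$ like $I_2$, via Gallagher's lemma applied to $S_3$ paired with $S_2$, which only costs $\sqrt U$ and yields the condition $k>2+2c_2$.
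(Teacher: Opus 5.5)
Your decomposition into the main term plus $I_1,I_2$, and your treatment of $I_2$ (which yields the $3+c_2$ condition), coincide with the paper's argument. The gap is in $I_1$. The bound you actually derive, $I_1\ll y\sqrt{PU}\log^{3+c_2+\varepsilon}N$, needs $k>6+2c_2+c_1+O(\varepsilon)$. This exceeds the stated threshold, and choosing $c_1$ small cannot close the difference. Note also that the $2+2c_2$ term in the hypothesis is the one $I_1$ must produce, not $3+c_2$.

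Two losses are responsible. First, you bound $U/(qQ)$ and $\sqrt q$ separately. But $U/(qQ)=\log^{c_2}N/q$, so the $1/q$ cancels the $\sqrt q$ from Lemma~\ref{lemma: all GRH}:
\[
\frac{U}{qQ}\sqrt{qU}\,\log^2(qU)=\log^{c_2}N\,\sqrt{U/q}\,\log^2(qU)\ll \sqrt U\log^{c_2}N\log^2U .
\]
Second, you replace $\log^2(qU)$ by $\log^{2+\varepsilon}N$. Since $q\le P\le U$ we have $\log(qU)\ll\log U$, and $\log^2U/\sqrt U$ is decreasing. So the worst case is $U=\log^kN$, where $\log^2U\asymp(\log\log N)^2$. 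With both corrections, $I_1\ll y\sqrt U\log^{1+c_2}N\log^2U$, which is $\ll Uy\log^{-\varepsilon}N$ once $k>2+2c_2+2\varepsilon$; this is exactly the paper's bound.

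You rightly flag that Lemma~\ref{lemma: all GRH} only covers $t\ge\sqrt{qU}\log^{2+\varepsilon}(qU)$, which the paper passes over. Your trivial-bound patch works, but it is not "no worse": it gives $\sqrt{qU}\log^{3+\varepsilon}U$. That is still harmless, since it costs only powers of $\log\log N$ at the bottom of the range, provided you keep $\log U$ and do not pass to $\log N$.

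Neither fallback repairs $I_1$ as proposed. The first, partial summation using $|\beta|\le 1/(qQ)$, is precisely Lemma~\ref{Lemma: Generating function}, which is where the factor $1+U/(qQ)$ came from. The saving lies in the bookkeeping above, not in a sharper partial summation.

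The second fails. Lemmas~\ref{lemma: Gallagher} and \ref{lemma: almost all GRH} give $\int_\fM|\Delta_3|^2\ll P\log^{c_2}N\log^2U$. But putting $\Delta_3$ in $L^2$ forces $S_2$ into $L^\infty$ on $\fM$, where it is only $\ll y$ (attained near $\alpha=1$). The result is $I_1\ll y^{3/2}\sqrt P\log^{(1+c_2)/2}N\log U$, which beats $Uy$ only when $U$ exceeds $\sqrt y$ by a power of $\log N$. That is useless near $U=\log^kN$, since $\sqrt y\ge N^{1/4}$. For small $U$ the right pairing is the one the paper uses: the sup of $\Delta_3$ against Lemma~\ref{lemma: Parseval} for both $S_1$ and $S_2$.
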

\begin{proof}
Similarly to Lemma \ref{eq: major arcs density I1 I2}, we obtain Equations \eqref{eq: major arcs density I1 I2}, \eqref{eq: major arcs density I1} \& \eqref{eq: major arcs density I2}, except, since we use the \nameref{con: GRH} result of Lemma \ref{lemma: Parseval}, these equations hold for $y\in[\sqrt{N}\log^{2}N,N/2]$. By applying Lemmas \ref{lemma: all GRH}, \ref{lemma: almost all GRH} \& \ref{lemma: totient lower bound} we then obtain

\begin{equation*}
\begin{split}
    I_1+I_2\ll& \sqrt{U}y\log^{1+c_2} N \log^2 U + Uy \bigg(\frac{\log^{3+c_2} N}{U}\sum_{q\leq P}\frac{1}{\varphi^2(q)}\bigg)^\frac{1}{2}\\
    \ll& Uy\log^{1+c_2-\frac{k}{2}} N \log^2 U+ Uy\log^\frac{3+c_2-k}{2} N.
\end{split}
\end{equation*}
Taking $k>2\varepsilon+\max\{2+2c_2,3+c_2\}$ concludes the proof.

\end{proof}

\begin{lemma}\label{lemma: major arcs GRH 2}
    Assume Condition \ref{con: zero density}. Let $0<\varepsilon_0<\varepsilon_1,c_1$. Let $y\in [\sqrt{N}\log^{2+\varepsilon_1}N,N/2]$ and $U\in[\log^{k}N,y/\log^{\varepsilon_1}N]$ for any $k>2\varepsilon_0+\max\{2+2c_2,3+c_2\}$. We have
    \begin{equation*}
        R_\fM =\mathfrak{S}(N)Uy +\cO\bigg(\frac{Uy}{\log^{\varepsilon_0}N}+\sqrt{U^3y(\log N)^{1+(3+\varepsilon_2)c_1-3c_2}} \bigg),
    \end{equation*}
    for any $\varepsilon_2>0$.
\end{lemma}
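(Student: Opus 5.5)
The plan is to follow the proof of Lemma \ref{lemma: major arcs density 2} almost verbatim, replacing each zero-density input by its \nameref{con: GRH} counterpart. (The statement says "Condition \ref{con: zero density}", but the hypotheses on $y$ and $U$ are the \nameref{con: GRH} ones, so I read it as a \nameref{con: GRH} statement.) The starting point is Lemma \ref{lemma: major arcs 1 GRH}. It gives $R_\fM$ as the sum over $q\leq P$ and $a$ coprime to $q$ of $\frac{\mu^2(q)}{\varphi^2(q)}\int_{-1/(qQ)}^{1/(qQ)}S_1 v_2v_3\e(-N\alpha)\diff\beta$, up to an error $\cO(Uy\log^{-\varepsilon_0}N)$. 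The lower bound $k>2\varepsilon_0+\max\{2+2c_2,3+c_2\}$ is exactly what that lemma requires.

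\emph{Step 1: complete the $\beta$-integral to $[-\frac12,\frac12]$.} The tail $\frac{1}{qQ}<|\beta|\leq\frac12$ is handled by Cauchy--Schwarz. Lemma \ref{lemma: Parseval} in its \nameref{con: GRH} form bounds $\int|S_1|^2\ll y\log N$, which is valid since $y\geq\sqrt N\log^{2}N$. Lemma \ref{lemma: geometric bound} gives $|v_2v_3|\ll U\min(y,|\beta|^{-1})$, hence $\int_{1/(qQ)}^{1/2}|v_2v_3|^2\ll U^2qQ$. Substituting $Q=U/\log^{c_2}N$, summing over $a$ (a factor $\varphi(q)$), and using Lemma \ref{lemma: totient lower bound} with $q\leq P=\log^{c_1}N$ yields
\begin{equation*}
\sum_{q\leq P}\frac{q^{1/2}}{\varphi(q)}\,U^{3/2}Q^{-1/2}\ldots \ll \sqrt{U^3y(\log N)^{1+(3+\varepsilon_2)c_1-3c_2}}.
\end{equation*}
This should reproduce the second error term in the statement, as the same computation did in \eqref{eq: major arcs density extension}; I will recheck the exponent bookkeeping.

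\emph{Step 2: evaluate the completed integral.} By Lemma \ref{lemma: orth relation}, exactly as in \eqref{eq: major arcs density I3}, $I_3(q,a)=U\e(-Na/q)S_1(a/q)+\cO(U^2)$. Write $S_1(a/q)=\frac{\mu(q)}{\varphi(q)}y+\Delta(N-y,y,a/q)$. Lemma \ref{lemma: all GRH} applies because $y\geq\sqrt{qN}\log^{2+\varepsilon}qN$ for $q\leq P$; this is the place where the hypothesis $y\geq\sqrt N\log^{2+\varepsilon_1}N$ is needed, and the factor $\sqrt q$ is absorbed since $q$ is only a power of $\log N$. It gives $\Delta\ll\sqrt{qN}\log^2 N$. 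Multiplying by $\mu^2(q)/\varphi^2(q)$ and summing over $a$ and $q\leq P$, the total error is
\begin{equation*}
\ll\bigl(U\sqrt{PN}\log^2N+U^2\bigr)\sum_{q\leq P}\frac{1}{\varphi(q)}\ll U\sqrt{PN}\log^3N+U^2\log N.
\end{equation*}
The main term becomes $Uy\,\mathfrak S(N,P)$, which by Lemmas \ref{lemma: Ramanujan sum error term} and \ref{lemma: Ramanujan sum multiplicative} equals $\mathfrak S(N)Uy+\cO(UyP^{-1+\varepsilon})$.

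\emph{Main obstacle.} The delicate step is showing that the two error terms $U^2\log N$ and $U\sqrt{PN}\log^3N$ are $\ll Uy\log^{-\varepsilon_0}N$. The first needs $U\leq y\log^{-1-\varepsilon_0}N$, which is stronger than the stated $U\leq y/\log^{\varepsilon_1}N$. To fix this, I will avoid the crude bound $\sum_{q\leq P}1/\varphi(q)$ on the $U^2$ term. Instead I will sum $\min(U,N-y-p_1)$ exactly: the $\cO(U^2)$ piece is $\sum_{N-y-U<p_1\leq N-y}\log p_1(\ldots)$, and the characters in $a$ combine as $\sum_a\e(ma/q)$ (a Ramanujan sum) to give a convergent $q$-sum, leaving an error of order $U^2$. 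The second term needs $y\geq\sqrt N\log^{3+c_1/2+\varepsilon_0}N$. Under the stated range, I will avoid this by using the averaged bound of Lemma \ref{lemma: almost all GRH} over $a$, following the summation pattern of $I_2$ in Lemma \ref{lemma: major arcs density 1}, or else accept a small power-of-log adjustment of the range. I expect this term to be where the bookkeeping requires care.
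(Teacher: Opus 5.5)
Your outline follows the paper's route: start from Lemma~\ref{lemma: major arcs 1 GRH}, complete the $\beta$-integral by Cauchy--Schwarz, evaluate $I_3(q,a)=U\e(-Na/q)S_1(a/q)+\cO(U^2)$ by orthogonality, and finish with Lemmas~\ref{lemma: Ramanujan sum error term} and~\ref{lemma: Ramanujan sum multiplicative}. As written, however, it is not yet a proof, and two computations need correcting.

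First, in Step~1, bounding $|v_3|\le U$ only gives $\int_{1/(qQ)}^{1/2}|v_2v_3|^2\diff\beta\ll U^2qQ$. This yields an error $\sqrt{U^3y(\log N)^{1+c_1-c_2}}$, which is not the stated one. On the tail, $|\beta|^{-1}<qQ\le PQ=U\log^{c_1-c_2}N<U\le y$, so both factors are $\ll|\beta|^{-1}$. The tail integral is then $\ll(qQ)^3$, and $\sum_{q\le P}q^{3/2}/\varphi(q)\ll P^{3/2+\varepsilon}$ gives exactly $\sqrt{U^3y(\log N)^{1+(3+\varepsilon_2)c_1-3c_2}}$.

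Second, your difficulty with the $U^2$ term comes from bounding $\sum_{q\le P}1/\varphi(q)$ by $\log N$. In fact this sum is $\ll\log P\ll\log\log N$. Hence $U^2\log\log N\le Uy\log\log N/\log^{\varepsilon_1}N\ll Uy/\log^{\varepsilon_0}N$ follows directly from $U\le y/\log^{\varepsilon_1}N$ and $\varepsilon_0<\varepsilon_1$. This is what the paper does, so the Ramanujan-sum rearrangement is unnecessary.

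The genuine gap is the $\Delta_1$ term, which you leave open. Your diagnosis is correct, and it contradicts your Step~2 remark that $\sqrt q$ is absorbed. Pointwise, Lemma~\ref{lemma: all GRH} gives only $\Delta_1(a/q)\ll\sqrt{qN}\log^2N$, so the contribution is $\ll U\sqrt N\log^2N\sum_{q\le P}\sqrt q/\varphi(q)\asymp U\sqrt{PN}\log^2N$. That is $\ll Uy/\log^{\varepsilon_0}N$ only when $y\ge\sqrt N\log^{2+c_1/2+\varepsilon_0}N$. The paper's proof writes this contribution as $\frac{Uy}{\log^{\varepsilon_1}N}\sum_{q\le P}\frac{1}{\varphi(q)}$, i.e.\ it silently drops the factor $\sqrt q$, so you have found a real weak point.

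Neither of your remedies repairs it:
\begin{itemize}
\item After the exact evaluation of the $\beta$-integral you face the single point $x=N-y$. A mean square over $x\in[X,2X]$ such as Lemma~\ref{lemma: almost all GRH} does not apply there.
\item Routing $\Delta_1$ through Lemma~\ref{lemma: Gallagher} as in $I_2$ brings back $\int_{N-2y}^{N-y}|\Delta(t,\frac{qQ}{2},\frac{a}{q})|^2\diff t$. That is the very obstruction for small $y$ which the reduction to the binary problem was designed to avoid.
\item Shrinking the $y$-range proves a different statement.
\end{itemize}

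What does work is averaging over $a$ alone through characters. Write $\Delta_1(a/q)=\varphi(q)^{-1}\sum_{\chi\,(q)}\bar\chi(a)\tau(\chi)E(\chi)+\cO(\log q)$, where $E(\chi)=\sum_{N-2y<p\le N-y}\bar\chi(p)\log p$, minus $y$ when $\chi=\chi_0$. Under GRH, $E(\chi)\ll\sqrt N\log^2N$ for every $\chi$, and $|\tau(\chi)|\le\sqrt q$. Orthogonality then gives $\psum_a|\Delta_1(a/q)|^2\ll qN\log^4N$, hence $\psum_a|\Delta_1(a/q)|\ll\sqrt{\varphi(q)qN}\log^2N$ by Cauchy--Schwarz. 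The total contribution becomes $\ll U\sqrt N\log^2N\sum_{q\le P}\sqrt q\,\varphi(q)^{-3/2}\ll U\sqrt N\log^2N\log\log N\ll Uy/\log^{\varepsilon_0}N$. This holds throughout the stated range $y\ge\sqrt N\log^{2+\varepsilon_1}N$.
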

\begin{proof}
    Similarly to \eqref{eq: major arcs density extension}, we take Lemma \ref{lemma: major arcs 1 GRH} and extend the integral in the main term to an entire unit interval, thereby incurring an error term. We may bound this error term by applying H\"older's inequality and Lemmas \ref{lemma: Parseval}, \ref{lemma: geometric bound} \& \ref{lemma: totient lower bound}.
    \begin{equation}\label{eq: major arcs GRH extension}
    \begin{split}
        R_\fM=&\sum_{q\leq P}\frac{\mu^2(q)}{\varphi^2(q)}\psum_{a(q)} \int_{-\frac{1}{2}}^{\frac{1}{2}} S_1(\frac{a}{q}+\beta) v_2(\beta) v_3(\beta) \e(-N(\frac{a}{q}+\beta))\diff\beta +\cO\bigg(\frac{Uy}{\log^{\varepsilon_0} N}\bigg)\\
        &+\cO\bigg(\sum_{q\leq P}\frac{1}{\varphi^2(q)}\psum_{a(q)} \bigg(\int_{\frac{1}{qQ}}^{\frac{1}{2}} |v_2(\beta) v_3(\beta)|^2  \diff\beta \int_{\frac{1}{qQ}}^{\frac{1}{2}} |S_1(\frac{a}{q}+\beta)|^2  \diff\beta\bigg)^\frac{1}{2} \bigg)\\
        =&\sum_{q\leq P}\frac{\mu^2(q)}{\varphi^2(q)}\psum_{a(q)} \int_{-\frac{1}{2}}^{\frac{1}{2}} S_1(\frac{a}{q}+\beta) v_2(\beta) v_3(\beta) \e(-N(\frac{a}{q}+\beta))\diff\beta +\cO\bigg(\frac{Uy}{\log^{\varepsilon_0} N}\bigg)\\
        &+\cO(\sqrt{U^3y(\log N)^{1+(3+\varepsilon_5)c_1-3c_2}}),
    \end{split}
    \end{equation}
    for any $\varepsilon_0,\varepsilon_2>0$. Next we combine \eqref{eq: major arcs density I3} \& \eqref{eq: major arcs GRH extension} and apply Lemma \ref{lemma: all GRH} for $y\in [\sqrt{N}\log^{2+\varepsilon_1}N,N/2]$ as well as Lemma \ref{lemma: Ramanujan sum error term} to extend the sum over $q$ to infinity.
    \begin{equation*}
    \begin{split}
        R_\fM=&Uy\sum_{q\leq P}\frac{\mu^3(q)}{\varphi^3(q)}\psum_{a(q)}\e(-Na/q)+\cO\bigg(\bigg(\frac{Uy}{\log^{\varepsilon_1}N} +U^2\bigg)\sum_{q\leq P}\frac{1}{\varphi(q)}\bigg)\\
        &+\cO\bigg(\frac{Uy}{\log^{\varepsilon_0} N}+\sqrt{U^3y(\log N)^{1+(3+\varepsilon_5)c_1-3c_2}}\bigg)\\
        =& \mathfrak{S}(N,\infty)Uy +\cO\bigg(\frac{Uy}{\log^{\varepsilon_0}N}+\sqrt{U^3y(\log N)^{1+(3+\varepsilon_5)c_1-3c_2}} \bigg).
    \end{split}
    \end{equation*}
    Lastly we apply Lemma \ref{lemma: Ramanujan sum multiplicative} to complete the proof.
\end{proof}
\subsection{The weighted asymptotic formula under \nameref{con: GRH}}
In this subsection we combine the major and minor arcs to obtain a weighted asymptotic formula for $R(N,y,U)$ given \nameref{con: GRH}. Theorem \ref{thm: main zero density} follows from Lemma \ref{lemma: weighted reps} and the following Lemma.
\begin{lemma}
    Assume \nameref{con: GRH}. Let $0<4\varepsilon_0<\varepsilon_1$. Let $y\in [\sqrt{N}\log^{2+\varepsilon_1},N/2]$ and\\ \mbox{$U\in[\log^{4+\varepsilon_1}N,y/\log^{\varepsilon_1}N]$.} We have
    \begin{equation*}
        R(N,y,U) =\mathfrak{S}(N)Uy +\cO\bigg(\frac{Uy}{\log^{\varepsilon_0}N}\bigg).
    \end{equation*}
\end{lemma}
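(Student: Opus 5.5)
The plan is to assemble the lemma from Lemma~\ref{lemma: R major minor split}, i.e. $R=R_\fM+R_\fm$, with the parameters $P=\log^{c_1}N$ and $Q=U/\log^{c_2}N$ chosen as tightly as the hypothesis $4\varepsilon_0<\varepsilon_1$ allows. I would fix a small $\delta>0$ with $4\varepsilon_0+6\delta<\varepsilon_1$ and take
\begin{equation*}
c_1=1+\varepsilon_0+\delta,\qquad c_2=c_1+\delta .
\end{equation*}
For the minor arcs, Lemma~\ref{lemma: minor arcs GRH} with $\varepsilon=\varepsilon_0$ applies, since $1+\varepsilon_0<c_1<c_2$ and $4+\varepsilon_1>1+2c_2+\varepsilon_0$. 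It gives $R_\fm\ll Uy\log^{-\varepsilon_0}N$.

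For the major arcs I would repeat the proof of Lemma~\ref{lemma: major arcs 1 GRH}, but tracking exponents rather than quoting its crude condition on $k$. The bound $I_1\ll \sqrt U\,y\log^{1+c_2}N\log^2U$ is $\ll Uy\log^{-\varepsilon_0}N$ as soon as $k>2+2c_2+2\varepsilon_0=4+4\varepsilon_0+4\delta$, and this holds for $k=4+\varepsilon_1$. The bound $I_2\ll Uy\log^{(3+c_2-k)/2}N$ needs only $k>3+c_2+2\varepsilon_0$, which is weaker because $c_2>1$. So the major arcs reduce to $\varepsilon_0$-precision to the main term $\sum_{q\le P}\frac{\mu^2(q)}{\varphi^2(q)}\sum^*_a\int_{|\beta|\le 1/(qQ)}S_1v_2v_3\,\e(\cdot)\diff\beta$.

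The main obstacle is extending this $\beta$-integral to $[-\tfrac12,\tfrac12]$. The bound used in \eqref{eq: major arcs density extension} and \eqref{eq: major arcs GRH extension} costs $Uy\,(U/y)^{1/2}\log^{(1+3c_1-3c_2)/2+o(1)}N$. This forces $c_2-c_1>1/3$, which in turn would push $k$ up to $14/3$, so I would replace that estimate. On the tail $|\beta|>1/(qQ)$ I would write
\begin{equation*}
S_1(\tfrac aq+\beta)=\tfrac{\mu(q)}{\varphi(q)}v_1(\beta)+\Delta_1(\tfrac aq+\beta),
\end{equation*}
with $\Delta_1$ defined by this formula globally in $\beta$.

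The $v_1$ part is handled by Lemma~\ref{lemma: geometric bound}, since $|v_1v_2v_3|\le|\beta|^{-3}$ because $1/|\beta|<qQ<U$. Its tail integral is $\ll (qQ)^2$, and summing over $q\le P$ and $a$ gives $\ll P^{2+\varepsilon}Q^2\le U^2$. This is absorbed in the $\cO(U^2\sum_{q\le P}\varphi(q)^{-1})=\cO(U^2\log\log N)$ already present from \eqref{eq: major arcs density I3}, and that is $\ll Uy\log^{-\varepsilon_0}N$ because $U\le y\log^{-\varepsilon_1}N$.

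For the $\Delta_1$ part I would split the tail dyadically, $|\beta|\asymp\eta=2^j/(qQ)$, and apply Cauchy--Schwarz on each piece, using $\int_{|\beta|\asymp\eta}|v_2v_3|^2\ll\eta^{-3}$. While $1/(2\eta)\ge q\log^{2+\varepsilon}N$, I would bound $\int_{|\beta|\le 2\eta}|\Delta_1|^2$ by Lemma~\ref{lemma: Gallagher}, with interval length $1/(2\eta)$ in place of $qQ/2$ (the same Gallagher argument). Summed over $a$ via Lemma~\ref{lemma: almost all GRH} with $X\asymp N$, this gives $\ll \eta\, qN\log^2N$. After Cauchy--Schwarz over $a$, the piece contributes $\ll\sqrt{\varphi(q)qN}\,\eta^{-1}\log N$. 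The geometric sum in $j$ is dominated by $\eta=1/(qQ)$, and weighting by $\varphi(q)^{-2}$ and summing over $q\le P$ gives
\begin{equation*}
\ll U\sqrt N\log^{1+c_1-c_2+\varepsilon}N\ll Uy\log^{-\varepsilon_0}N,
\end{equation*}
using $y\ge\sqrt N\log^{2+\varepsilon_1}N$. For the remaining range $\eta\ge 1/(q\log^{2+\varepsilon}N)$ I would use Parseval (Lemma~\ref{lemma: Parseval}) instead, which gives only $\ll\sqrt y\,(\log N)^{\cO(1)}$. That is negligible because $U\ge\log^{4}N$.

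Having extended the integral, I would finish exactly as in Lemma~\ref{lemma: major arcs GRH 2}. Evaluating $I_3(q,a)$ by \eqref{eq: major arcs density I3} and bounding $\Delta_1(a/q)$ by Lemma~\ref{lemma: all GRH} gives $\Delta_1(a/q)\ll\sqrt{qN}\log^2N\ll y\log^{-\varepsilon_1}N$ for our $y$. Completing the singular series by Lemmas~\ref{lemma: Ramanujan sum error term} and \ref{lemma: Ramanujan sum multiplicative} then yields $R_\fM=\mathfrak S(N)Uy+\cO(Uy\log^{-\varepsilon_0}N)$, and adding $R_\fm$ proves the lemma.
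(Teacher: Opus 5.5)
Your argument breaks at the last step, where you assert $\Delta_1(a/q)\ll\sqrt{qN}\log^2N\ll y\log^{-\varepsilon_1}N$ for all $q\le P$. Since $P=\log^{c_1}N$ with $c_1>1$ (forced by the minor arcs), this needs $y\gg\sqrt N\log^{2+\varepsilon_1+c_1/2}N$. At the bottom of the allowed range, $y=\sqrt N\log^{2+\varepsilon_1}N$, it fails by a factor $\log^{c_1/2}N$. Summed with the weights, the pointwise bound of Lemma~\ref{lemma: all GRH} only gives
\begin{equation*}
U\sum_{q\le P}\frac{\mu^2(q)}{\varphi^2(q)}\psum_{a\,(q)}|\Delta_1(a/q)|\ll U\sqrt{N}\log^2N\sum_{q\le P}\frac{\mu^2(q)\sqrt{q}}{\varphi(q)}\ll U\sqrt{N}\log^{2+c_1/2}N.
\end{equation*}
This is $\cO(Uy\log^{-\varepsilon_0}N)$ only if $\varepsilon_1>\varepsilon_0+c_1/2>\frac{1}{2}$, whereas the statement allows every $\varepsilon_1>4\varepsilon_0$. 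The paper's proof of Lemma~\ref{lemma: major arcs GRH 2} makes exactly the same claim, so you inherited this hole rather than created it, but it is a hole. The repair is the saving you already use in your tail estimate: average over $a$ instead of bounding pointwise. Expand $\e(ap/q)$ in characters mod $q$ and use orthogonality in $a$, $|\tau(\chi)|^2\le q$, and GRH for each $\chi$. This gives $\psum_{a\,(q)}|\Delta_1(a/q)|^2\ll qN\log^4N$, hence $\psum_{a\,(q)}|\Delta_1(a/q)|\ll\sqrt{\varphi(q)qN}\log^2N$. The weighted sum then drops to $\ll U\sqrt N\log^2N\sum_{q\le P}\mu^2(q)\sqrt{q}\,\varphi(q)^{-3/2}\ll U\sqrt N\log^2N\log\log N$, which is admissible for $y\ge\sqrt N\log^{2+\varepsilon_1}N$.

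Apart from that, your extension of the $\beta$-integral to $[-\frac{1}{2},\frac{1}{2}]$ is a genuinely different route from the paper's, and it checks out. The $v_1$ part of the tail costs $\cO(U^2)$. The dyadic Gallagher treatment of $\Delta_1$, with Cauchy--Schwarz in $a$, costs $\cO(U\sqrt N\log^{1+c_1-c_2+\varepsilon}N+\sqrt y\log^{\cO(1)}N)$; the last term is negligible because $\sqrt y\ge N^{1/4}$, not because $U\ge\log^4N$. Your reason for needing it is mistaken, though. The paper keeps the crude bound $Uy\,(U/y)^{1/2}(\log N)^{(1+3c_1-3c_2)/2+\varepsilon}$ and splits into two cases. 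If $U\le\sqrt y$, then $(U/y)^{1/2}\le N^{-1/8}$ absorbs any power of $\log N$, so one may take $1+\varepsilon_0<c_1<c_2<1+\frac{\varepsilon_1}{2}-\varepsilon_0$; this is essentially your choice, and it is where $4\varepsilon_0<\varepsilon_1$ comes from. If $U>\sqrt y$, then $U\ge N^{1/4}$, the conditions on $k$ are vacuous, and $c_2>c_1+\frac{1}{3}$ costs nothing. So the constraint $c_2-c_1>\frac{1}{3}$ never pushes $k$ up to $14/3$. Your refinement buys one uniform choice of $c_1,c_2$ for all $U$, at the price of a much heavier argument than this two-case split.

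Finally, your minor-arc step, like the paper's, quotes Lemma~\ref{lemma: minor arcs GRH}. In passing from its first to its second displayed line, the proof of that lemma replaces $1+U/(qQ)$ by $U/(qQ)$. For $\log^{c_2}N<q\le Q$ the discarded term $\sqrt{qU}\log^2U$ reaches $\sqrt{QU}\log^2U=U\log^{-c_2/2}N\log^2U$. Against the factor $y\log N$ from Parseval, this already forces $c_2>2$. As a black box the lemma covers you, but its proof does not support a choice $c_2\approx 1+\varepsilon_0$.
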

\begin{proof}
    The proof follows directly from Lemmas \ref{lemma: minor arcs density} \& \ref{lemma: major arcs density 2}. For sufficiently small $U$ (e.g. $U\leq \sqrt{y}$) we take $1<1+\varepsilon_0<c_1<c_2<1+\frac{\varepsilon_1}{2}-\varepsilon_0$. For sufficiently large $U$ (e.g. $U> \sqrt{y}$) we take $1<1+\varepsilon_0<c_1<c_2-\frac{1}{3}$.
\end{proof}

\end{document}